\newtheorem{mainthm}{Theorem}
\newtheorem{theorem}{Theorem}[section]
\newtheorem*{theorem*}{Theorem}
\newtheorem{corollary}[theorem]{Corollary}
\newtheorem{lemma}[theorem]{Lemma}
\newtheorem{proposition}[theorem]{Proposition}
\newtheorem*{proposition*}{Proposition}
\newtheorem*{conjecture*}{Conjecture}
\theoremstyle{definition}
\newtheorem{definition}[theorem]{Definition}
\newtheorem{remark}[theorem]{Remark}
\newtheorem*{rk}{Remark}
\newtheorem{cl}[theorem]{Claim}
\numberwithin{equation}{section}
\def\bR {\mathbb{R}}
\def\d{\displaystyle}
\def\cE {\mathcal{E}}
\def\cF {\mathcal{F}}
\def\F{{\cal F}}
\def\la {\langle}
\def\ra {\rangle}
\def\ps{\partial_s}
\def\sgn{\mathop{\rm sgn}}
\newcommand\bb{{d}}
\newcommand{\ds}{\displaystyle}
\newcommand{\kz}{{K_0}}
\newcommand{\ku}{{K_1}}
\newcommand{\tx}[1]{\mathrm{#1}}
\newcommand{\wt}[1]{\widetilde{#1}}
\newcommand{\bs}[1]{\boldsymbol{#1}}
\newcommand{\eee}{\mathrm e}
\renewcommand\iint{\int_{-1}^1}
\newcommand{\ud}{\mathrm{\,d}}
\newcommand{\vd}{\mathrm{d}}
\newcommand{\dd}[1]{{\frac{\vd}{\vd{#1}}}}
\newcommand{\bd}[1]{\boldsymbol{#1}}
\newcommand{\m}[1]{\mathbbm{#1}}
\newcommand{\q}[1]{\mathcal{#1}}
\newcommand\qlun{{q^l_1}}
\newcommand\qldeux{{q^l_2}}
\newcommand\RR{{\cal R}}
\renewcommand\SS{{\cal S}}
\renewcommand\tt{\mu}
\newcommand{\un}[2]{\m 1_{\{#1<y<#2\}}}
\newcommand\xxi{\zeta}
\renewcommand\H{{\mathcal{H}}}
\newcommand{\vc}[2]{\begin{pmatrix} #1\\#2\end{pmatrix}}
\title{\textbf{Scalar behavior for a complex multi-soliton \\ arising in
    blow-up for a semilinear wave equation}}
\author[*]{Asma Azaiez}
\author[**]{Jacek Jendrej}
\author[***]{Hatem Zaag}
\affil[*]{Department of Mathematics, Faculty of Sciences of Sfax,
  University of Sfax, BP 1171, 3000, Sfax, Tunisia
 \texttt{asma.azaiez@yahoo.fr}}
\affil[**]{Sorbonne Universit\'e, IMJ-PRG, CNRS (UMR 7586), F-75005,
  Paris, France,
  and~AGH~University of Science and Technology, WMS, 30-059 Krak\'ow, Poland,
\texttt{jendrej@imj-prg.fr}}
\affil[***]{Universit\'e Sorbonne Paris Nord,
  LAGA, CNRS (UMR 7539), F-93430, Villetaneuse, France,
  \texttt{hatem.zaag@univ-paris13.fr}}
\begin{document}

\maketitle

\begin{abstract}
This paper deals with blow-up for the complex-valued semilinear wave equation
with power nonlinearity in dimension 1. Up to a rotation of the
solution in the complex plane, we show that near a characteristic blow-up
point, the solution behaves exactly as in the real-valued
case. Namely, up to a rotation in the complex plane, the solution
decomposes into a sum of a finite number of decoupled
solitons with alternate signs. The main novelty of our proof is a resolution
of a complex-valued first order Toda system governing the evolution of the positions and the phases of the solitons.
\end{abstract}

\medskip

{\bf MSC 2010 Classification}:  
%
%
35B40,    	
35B44,    	
35L05,    	
35L51,   	
35L67,    	
35L71    	

\medskip

{\bf Keywords}: Semilinear wave equation, complex-valued equation,
blow-up behavior, first order Toda system, characteristic points

\section{Introduction}
We consider the occurrence of blow-up for  the following complex-valued one-dimensional semilinear wave equation
\begin{equation}\label{equ} 
  \left\{
\begin{array}{l}
\displaystyle\partial^2_{t} u = \partial^2_{x} u+|u|^{p-1}u, \\
u(0)=u_{0} \mbox{ and }  u_{t}(0) = u_{1},
\end{array}
\right . 
\end{equation}
where $u(t):  x\in \mathbb{R}\to u(x,t) \in \mathbb{C}$, $p>1$,
$u_0 \in H^1_{loc,u}$ and $ u_1\in L^2_{loc,u}$ with \\
$\|v\|^2_{L^2_{loc,u}}=\displaystyle\sup\limits_{a\in \mathbb{ R}}
\int_{|x-a|<1}|v(x)|^2 dx $ and $\| v\|^2_{H^1_{loc,u}}=\|
v\|^2_{L^2_{loc,u}}+\|  \nabla v\|^2_{L^2_{loc,u}}$.
The Cauchy problem is locally wellposed. By energy arguments, Levine showed in \cite{Ltams74} the existence of blow-up solutions. 

\medskip

Equation \eqref{equ} can be considered as a lab model for blow-up in hyperbolic equations, because it captures features common to a whole range of blow-up problems arising in various nonlinear physical models, in particular in general relativity (see Donninger, Schlag and Soffer \cite{DSScmp12}), and also for self-focusing waves in nonlinear optics (see Bizo\'n, Chmaj and Szpak \cite{BCSjmp11}). 
Since many of those more physical examples take
the form of systems, we are interested in the complex-valued case in
\eqref{equ}, which has been poorly studied.

\medskip

If $u$ is a blow-up solution of equation \eqref{equ}, we define (see
for example Alinhac \cite{Apndeta95}) a 1-Lipschitz curve $\{(x,T(x))\}$ where $x\in{\bR}$ such that the domain of definition of $u$ is written as 
\begin{equation}\label{defdu}
D=\{(x,t)\;|\; t< T(x)\}.
\end{equation}
The set $\{(x,T(x))\}$ is called the blow-up surface of $u$. 
A point $x_0\in{\bR}$ is a non-characteristic point if there are 
\begin{equation}\label{nonchar}
\delta_0\in(0,1)\mbox{ and }t_0<T(x_0)\mbox{ such that }
u\;\;\mbox{is defined on }{\cal C}_{x_0, T(x_0), \delta_0}\cap \{t\ge t_0\}
\end{equation}
where 
\begin{equation}\label{defcone}
{\cal C}_{\bar x, \bar t, \bar \delta}=\{(x,t)\;|\; t< \bar t-\bar \delta|x-\bar x|\},
\end{equation}
as illustrated in figure \ref{fig1}.
 \begin{figure}
\centering
\includegraphics[width=0.6\textwidth]{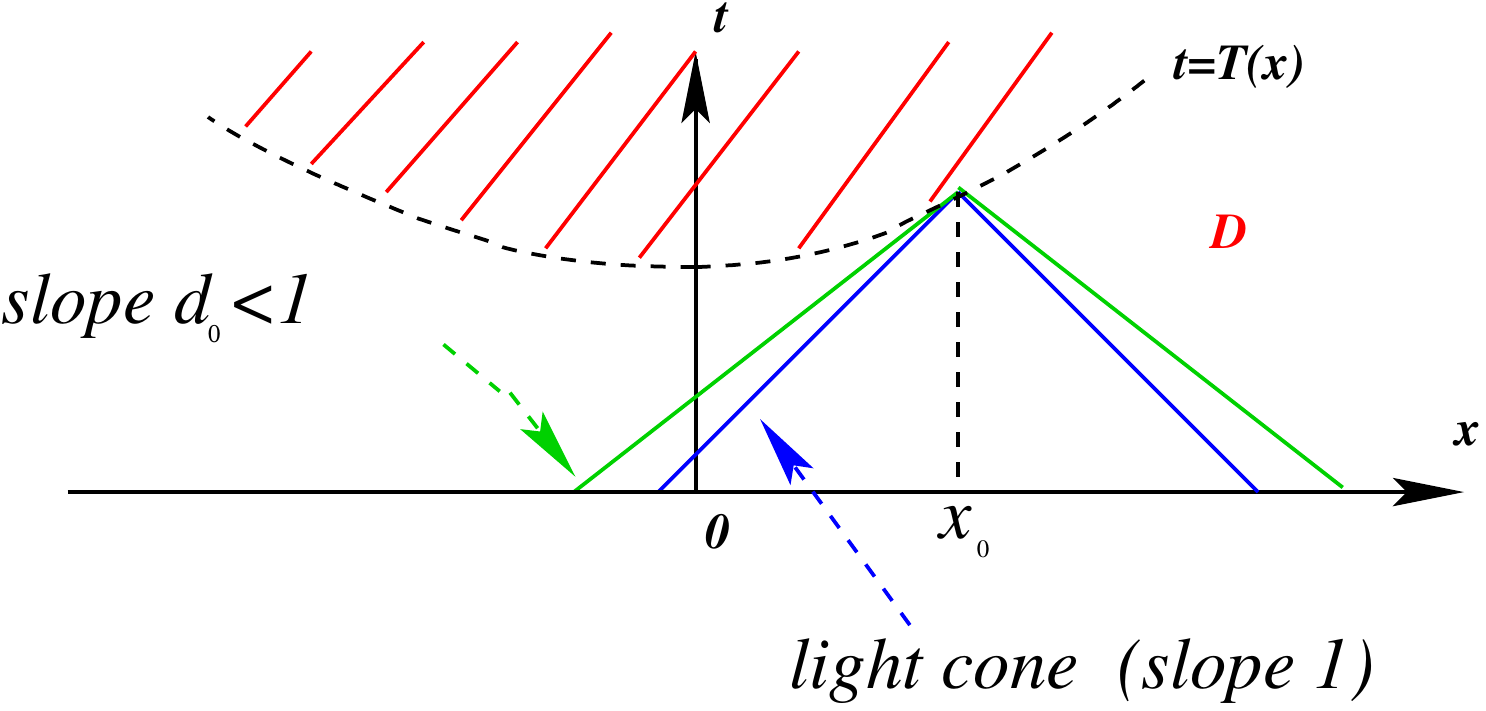}
\caption{$x_0$ is a non-characteristic point ($N=1$).}\label{fig1}
\end{figure}
If not, we say that $x_0$ is a characteristic point.
 We denote by $\RR\subset {\bR}$ the set of non-characteristic
 points and by $\SS$ the set of characteristic points, as illustrated in figure \ref{fig2}.
 \begin{figure}
\centering
\includegraphics[width=0.6\textwidth]{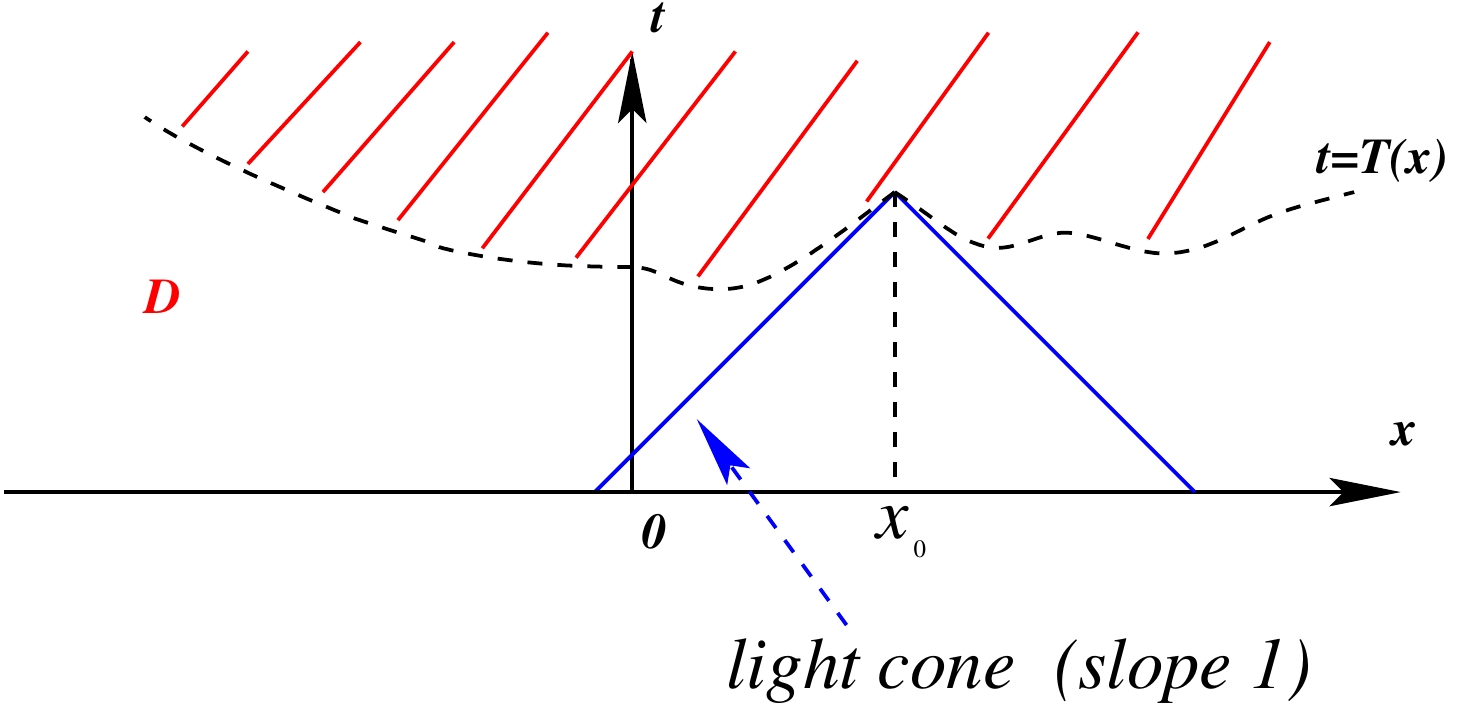}
\caption{$x_0$ is a characteristic point ($N=1$).}\label{fig2}
\end{figure}

\bigskip

In order to state blow-up results for $u$, we introduce the following self-similar change of variables:
\begin{equation}\label{defw}
w_{x_0}(y,s) =(T(x_0)-t)^\frac{2}{p-1}u(x,t), \quad  y=\frac{x-x_0}{T(x_0)-t}, \quad
s=-\log(T(x_0)-t),
\end{equation}
together with  the following energy functional
\begin{equation}\label{defE}
 E(w,\partial_s w)=\int_{-1}^{1} \left( \frac{1}{2} |\partial_s w|^2+\frac{1}{2} |\partial_y w|^2 (1-y^2)+\frac{p+1}{(p-1)^2}|w|^2-\frac{1}{p+1}|w|^{p+1}\right) \rho dy
\end{equation}
where
\begin{equation}\label{defro}
  \rho (y)= (1-y^2)^\frac{2}{p-1}.
  \end{equation}
Note that this functional is well defined in the following Hilbert space
 \begin{equation}\label{defnh}
\mathcal{ H}=\{ q \in H_{loc}^1 \times L_{loc}^2 ((-1,1),\mathbb{C}) \, \|  q \|_{\mathcal{ H}}^2 \equiv \int_{-1}^{1}(|q_1|^2+|q'_1|^2(1-y^2)+|q_2|^2)\rho\;dy< +\infty \} ,
\end{equation}
thanks to some  Hardy-Sobolev identity proved by Merle and Zaag in the appendix of \cite{MZajm03}.
Note that $\mathcal{ H}=\mathcal{ H}_0\times L_{\rho}^2$ where
 \begin{eqnarray}\label{10}
\mathcal{ H}_0=\{ r \in H_{loc}^1  ((-1,1),\mathbb{C}) \,\| r \|_{\mathcal{ H}_0}^2 \equiv \int_{-1}^{1}(|r'|^2(1-y^2)+|r|^2)\rho\;dy< +\infty\} .
 \end{eqnarray}
%
%
 Note also that the change of variables \eqref{defw} transforms the backward light cone with
vertex $(x_0, T(x_0))$ into the infinite cylinder $(y,s)\in (-1,1)
\times [-\log T(x_0),+\infty).$ We also introduce the following solitons
\begin{eqnarray}\label{defk}
\forall (d, y) \in (-1,1)^2,\;\kappa(d,y)= \kappa_0 \frac{(1-d^2)^\frac{1}{p-1}}{(1+dy)^\frac{2}{p-1}} \mbox{ and }\kappa_0=\left(\frac{2(p+1)}{(p-1)^2}\right)^\frac{1}{p-1},
\end{eqnarray}
and
\[
\bar \zeta_{k,i}(s) = \frac{p-1}2 \left(i-\frac{k+1}2\right) \log s + \alpha_{k,i}
\]
(we write $\bar \zeta_i(s)$ for simplicity)
for any $k\ge 2$, $i=1,\dots,k$ and $s>0$, where the $\alpha_{k,i}$ are 
chosen so that $(\bar \zeta_1(s),\dots, \bar\zeta_k(s))$ is a solution
of the following first order (real-valued) Toda system:
\begin{equation}\label{toda}
\forall i=1,\dots,k,\;\;\frac 1A \bar \zeta'_i = e^{-\frac 2{p-1}(\bar\zeta_i-\bar\zeta_{i-1})}- e^{-\frac 2{p-1}(\bar\zeta_{i+1}-\bar\zeta_i)},
\end{equation}
with zero center of mass, in the sense that
\[
\frac{\bar\zeta_1(s)+\dots +\bar\zeta_k(s)}k=0, \forall s> 0.
\]
Note that
\begin{equation}\label{defA}
 A=\frac{p(p-1)}{2\iint \rho(y)dy}\kappa_0^{p-1}2^{\frac
   2{p-1}}\int_{-\infty}^\infty  \cosh^{-\frac{2p}{p-1}}(z)e^{\frac {2z}{p-1}}\tanh z dz >0,
 \end{equation}
is a constant that was first encountered 
in Proposition 3.2 page 590 in Merle and Zaag \cite{MZajm12}. Although
its precise value was not give in that statement, a careful reading of
the proof gives the expression in \eqref{defA}. Note also that we
take by convention $\bar\zeta_0\equiv - \infty$ and  $\bar\zeta_{k+1}\equiv + \infty$.

\bigskip

Let us first recall that the real-valued case has been much more
studied than the complex-valued. In
particular, in one space dimension, all the blow-up behavior has been
studied in a series of papers by Merle and Zaag \cite{MZjfa07,
  MZcmp08, MZxedp10, MZajm12, MZdmj12} and also in C\^ote and Zaag
\cite{CZcpam13}.
See Hamza and Zaag \cite{HZjde19} for a related result.
See also Caffarelli and Friedman in \cite{CFtams86} and \cite{CFarma85} for earlier results. See also Killip and Vi\c san \cite{KV11}.\\ 
In higher dimensions, some partial results are available in
\cite{MZcmp15} and \cite{MZtams16}, and also in \cite{MZcpam18} where
a solution having a pyramid shaped blow-up surface is constructed (see
also the note \cite{MZsls17}).

\bigskip

In the complex-valued case, we are aware of the papers by the first
author Azaiez
\cite{Atams15} and \cite{Acpaa19}, where the blow-up behavior near
non-characteristic points is derived (see Theorem 5 and Proposition 3 in \cite{Atams15}):

\medskip

\textit{
  $\RR$ is open and
  $T_{|\RR}$ is $C^1$. Moreover, if
$x_0\in \RR$, 
then $w_{x_0}(s) \to \kappa(T'(x_0),\cdot)$ and\\
  $E(w_{x_0}(s),\partial_s w_{x_0}(s))\to E(\kappa_0,0)$ as $s\to \infty$.
}
\medskip

A generalization to the vector-valued case (with 3
components and more)  was later obtained by
Azaiez and Zaag in \cite{AZbsm17}. As for the characteristic case, it
remained so far open. This is precisely the aim of our paper, and this is our main result:
\begin{mainthm}[Description of the set of characteristic points and the
  blow-up behavior at characteristic points]\label{th1}
If $x_0\in \SS$, then 
\begin{equation}\label{decom1}
\left\|\begin{pmatrix} w_{x_0}(s)\\\partial_s
  w_{x_0}(s) \end{pmatrix} -e^{i \bar
  \theta(x_0)}\begin{pmatrix}\displaystyle\sum_{j=1}^{\bar k(x_0)}
  (-1)^{j+1}\kappa(\bb_j (s),\cdot)\\0\end{pmatrix}
\right\|_{\mathcal{ H}}\rightarrow 0
\end{equation}
and $E(w_{x_0}(s), \partial_s w_{x_0}(s))\rightarrow \bar k(x_0) E(\kappa_0,0)$
 as $s\rightarrow \infty$, with  $d_j(s)=-\tanh
 (\bar\zeta_j(s)+\bar\zeta(x_0)) \in (-1,1)$ for $j=1,...,\bar k(x_0)$,  for some
\[
 \bar \theta(x_0)\in \bR, \;\;\bar k(x_0)\ge 2\mbox{ and }\bar\zeta(x_0)\in \bR.
\]
\end{mainthm}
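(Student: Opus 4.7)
The plan follows the soliton-decomposition strategy developed for the real-valued equation in Merle--Zaag \cite{MZajm12,MZdmj12} and C\^ote--Zaag \cite{CZcpam13}, with the essential new ingredient being the treatment of complex phases. First, from monotonicity of the functional $E$ of \eqref{defE} along the self-similar flow together with standard compactness/covering arguments, one shows that $(w_{x_0}(s),\partial_s w_{x_0}(s))$ is bounded in $\mathcal{H}$ and, along subsequences, approaches a configuration of stationary solutions. In the complex-valued setting, the nontrivial stationary solutions in the disc form the orbit $\{e^{i\theta}\kappa(d,\cdot):(\theta,d)\in\bR/2\pi\bZ\times(-1,1)\}$, so energy quantization forces the trajectory to asymptotically look like a superposition of $k=\bar k(x_0)\ge 2$ solitons $e^{i\theta_i(s)}\kappa(d_i(s),\cdot)$ (the lower bound $k\ge 2$ comes, as in the real case, from the characteristic nature of $x_0$, which rules out convergence to a single soliton).

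The second step is a modulation decomposition
\[
\begin{pmatrix} w_{x_0}(s) \\ \partial_s w_{x_0}(s) \end{pmatrix}
= \sum_{i=1}^{k} e^{i\theta_i(s)} \begin{pmatrix} \kappa(d_i(s),\cdot) \\ 0 \end{pmatrix} + \begin{pmatrix} g_1(s) \\ g_2(s) \end{pmatrix},
\]
with $2k$ orthogonality conditions imposed on the remainder $(g_1,g_2)$ against the tangent directions $\partial_d\kappa(d_i,\cdot)$ and $i\,\kappa(d_i,\cdot)$ at each soliton (the generators of translation and phase rotation). Coercivity of the Hessian of $E$ transverse to these $2k$ directions, combined with energy dissipation, will control $\|(g_1,g_2)\|_{\mathcal{H}}$ once the parameters are known to behave well. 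Projecting the evolution equation for $w_{x_0}$ onto the modulation directions and expanding the nonlinear interactions between neighboring solitons then yields, after setting $d_i=-\tanh\zeta_i$, a \emph{complex-valued first order Toda system} for the collective coordinates $(\zeta_i,\theta_i)$: its real part drives the positions via exponential interactions weighted by $\cos(\theta_{i\pm 1}-\theta_i)$, while its imaginary part drives the phase differences via the analogous $\sin$ terms.

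The main obstacle, and the genuinely novel part, is the asymptotic resolution of this complex Toda system. One must show that the phase differences $\theta_{i+1}(s)-\theta_i(s)$ lock at $\pi\pmod{2\pi}$ (producing the alternate signs $(-1)^{i+1}$) and that the positions $\zeta_i(s)$ approach, up to a common translation $\bar\zeta(x_0)$, the explicit solution $\bar\zeta_i(s)=\tfrac{p-1}{2}(i-\tfrac{k+1}{2})\log s+\alpha_{k,i}$ of the real Toda system \eqref{toda}. Phase-locking is where the complex nature of the equation really matters: it should be obtained from a Lyapunov/monotonicity argument built from $E$, showing that any non-antipodal limit of $\theta_{i+1}-\theta_i$ makes the interaction potentials too large in modulus to be compatible with the dissipation of $E$, while exploiting the $\sin$-equation to rule out persistent oscillation. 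Once phase-locking is established, the $\cos$ factors become $-1$ and the system reduces to the repulsive real Toda \eqref{toda} (up to a translation), whose asymptotic behavior is classical.

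Finally, combining these ingredients, one factors out $\bar\theta(x_0):=\lim_s\theta_1(s)$, absorbs the $\pi$-shifts of consecutive solitons into the alternate signs, and writes $d_i(s)=-\tanh(\bar\zeta_i(s)+\bar\zeta(x_0))$. The $\mathcal{H}$-convergence \eqref{decom1} then results from the modulation convergence of parameters together with the coercivity-based smallness of the remainder $(g_1,g_2)$, and the energy limit $E(w_{x_0}(s),\partial_s w_{x_0}(s))\to\bar k(x_0)\,E(\kappa_0,0)$ follows from quantization plus vanishing of the remainder's contribution.
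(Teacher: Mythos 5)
Your outline captures the global strategy of the paper correctly: decomposition into a sum of decoupled solitons via the Lyapunov functional $E$ and the classification of complex stationary solutions, modulation around the decoupled sum, derivation of a complex first-order Toda system for $(\zeta_i,\theta_i)$, phase-locking at $\pi$ followed by reduction to the real Toda dynamics, and finally passage back to the original variables. However, there is a concrete gap in your modulation step.

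You impose $2k$ orthogonality conditions (against $\partial_d\kappa(d_i,\cdot)$ and $i\kappa(d_i,\cdot)$, i.e.\ the two zero modes per soliton) and then claim that ``coercivity of the Hessian of $E$ transverse to these $2k$ directions'' controls the remainder. This is false: the linearization of \eqref{eqw} around $\kappa(d)$ has, in addition to the two zero modes $\check F_0$ and $\tilde F_0$, a \emph{positive} eigenvalue $\lambda=1$ with eigenfunction $\check F_1$ (see \eqref{110}), which corresponds to a direction of \emph{negative} curvature of $E$. So there are $3k$, not $2k$, nonnegative directions, and the Hessian is indefinite on the orthogonal complement of your $2k$ modes. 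Modulation cannot kill the $\check F_1$ mode (it is not a symmetry generator), so one must \emph{track} the projections $\check\alpha_1^l$ separately and show they stay small via an ODE of the form $|(\check\alpha_1^l)' - \check\alpha_1^l| \le CJ + C\|q\|_{\mathcal H}^2$, which forces $\check\alpha_1^l$ to be of order $J$ by the usual trapping argument for exponentially unstable modes. Without this, the chain ``coercivity $+$ dissipation $\Rightarrow$ smallness of the remainder'' breaks down at the very first estimate. Relatedly, your description of the ODE-level phase-locking is too vague to be verifiable: the paper does not argue directly from the PDE energy $E$ but constructs a dedicated ODE Lyapunov functional $\mathcal E(\boldsymbol\zeta,\boldsymbol\theta)=-\sum_j\cos(\theta_{j+1}-\theta_j)\,e^{-\frac{2}{p-1}(\zeta_{j+1}-\zeta_j)}$, and the coercivity needed to run the monotonicity argument comes from the discrete Dirichlet Laplacian combined with the Perron--Frobenius theorem (Lemma~\ref{lem:matrix-D}); these are the actual mechanisms behind the phrase ``non-antipodal phases are incompatible with dissipation,'' and some substitute for them must be supplied for your sketch to close.
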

\begin{rk} This is the first time a result about characteristic points
  for a vector-valued wave equations is obtained. The fact that the
  nonlinear term in equation \eqref{equ} is a gradient (when $\m C$ is
  identified with $\m R^2$) is crucial. See Proposition \ref{propdyn}
  and its proof for details.
\end{rk}

From \eqref{decom1}, we see that up to some rotation in the complex
plane, the solution is asymptotically real-valued. Relying on the one hand on the
analysis in the real-valued case given by Merle and Zaag in
\cite{MZajm12} and \cite{MZdmj12} together with C\^ote and Zaag in
\cite{CZcpam13}, and on the other hand, on the results of Azaiez in
\cite{Atams15} in the complex-valued case, we obtain the following
description of the blow-up graph near characteristic points:
\begin{mainthm}[Regularity of the blow-up graph near characteristic
  points] \label{cor1}
   Following Theorem \ref{th1}, 
the set $\q S$ consists of
  isolated points, and for any $x_0\in \q S$, $T$ is left and right
  differentiable, with $T'_l(x_0)=1$ and  $T'_r(x_0)=-1$. Moreover, 
 \begin{align}
T'(x)& = -\eta(x)\left(1-\frac{\gamma
       e^{-2\eta(x)\bar \zeta(x_0)}(1+o(1))}{|\log|x-x_0||^{\frac{(\bar
       k(x_0)-1)(p-1)}2}}\right),\label{t'}\\
T(x)&=T(x_0)-|x-x_0|+\frac{\gamma
      e^{-2\eta(x)\bar \zeta(x_0)}|x-x_0|(1+o(1))}{|\log|x-x_0||^{\frac{(\bar
      k(x_0)-1)(p-1)}2}}\nonumber
\end{align}
as $x\to x_0$, where $\eta(x) = \frac{x-x_0}{|x-x_0|}=\pm 1$ and $\gamma=\gamma(p)>0$.
\end{mainthm}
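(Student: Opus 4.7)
The strategy is to combine the complex-valued asymptotics of Theorem~\ref{th1} with the rotation invariance of \eqref{equ} in order to reduce the problem, point by point, to the real-valued setting studied in \cite{MZajm12}, \cite{MZdmj12}, \cite{CZcpam13}. Because the nonlinearity $|u|^{p-1}u$ commutes with $u\mapsto e^{i\alpha}u$, the function $\tilde u(x,t) = e^{-i\bar\theta(x_0)}u(x,t)$ solves \eqref{equ} with the same blow-up surface $\{(x,T(x))\}$ and the same characteristic set $\q S$; Theorem~\ref{th1} then asserts that its self-similar profile $\tilde w_{x_0}(s)$ converges to the purely real multi-soliton $\sum_{i=1}^{\bar k(x_0)}(-1)^{i+1}\kappa(d_i^*(s),\cdot)$. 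Combined with Azaiez's theorem \cite{Atams15} at non-characteristic points, where $w_x(s)\to\kappa(T'(x),\cdot)$ up to a constant phase, this provides exactly the scalar structure needed to transport the real-valued arguments of Merle-Zaag and C\^ote-Zaag into our setting.

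\textbf{Step 1: Isolation of $\q S$.} Assume for contradiction that $x_n\in\q S$ with $x_n\to x_0\in\q S$. By Theorem~\ref{th1}, each $x_n$ concentrates at least $2\,E(\kappa_0,0)$ of limiting energy in its truncated backward cone. I would adapt the energy-in-cones accounting of Merle and Zaag \cite{MZajm12}: the functional \eqref{defE} is itself rotation-invariant, so the a~priori unrelated phases $\bar\theta(x_n)$ drop out of the scalar energy inequalities, and the real-valued argument carries over to contradict the finite multi-soliton energy budget available at $x_0$. Once isolation is established, the punctured neighborhood of $x_0$ consists of non-characteristic points only, so $T$ is $C^1$ there by Azaiez's result.

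\textbf{Step 2: One-sided derivatives and the expansion \eqref{t'}.} By the Toda system \eqref{toda}, $\bar\zeta_{\bar k(x_0)}(s)\sim \tfrac{p-1}{2}\cdot\tfrac{\bar k(x_0)-1}{2}\log s\to +\infty$, so $d_{\bar k(x_0)}(s) = -\tanh(\bar\zeta_{\bar k(x_0)}(s)+\bar\zeta(x_0))\to -1$; geometrically, the rightmost soliton drifts onto the right edge $y=-1$ of the self-similar cylinder, which forces $T'_r(x_0) = -1$. The symmetric statement $T'_l(x_0)=+1$ follows from $\bar\zeta_1(s)\to -\infty$ and $d_1(s)\to +1$. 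The quantitative tail
\[
1+d_{\bar k(x_0)}(s)\;\sim\; 2\, e^{-2\bar\zeta(x_0)}\, s^{-(p-1)(\bar k(x_0)-1)/2},
\]
matched against the standard inversion $s\leftrightarrow -\log|x-x_0|$ of \cite{MZdmj12}, then produces precisely the correction term in \eqref{t'}; the sign $\eta(x)=\pm 1$ emerges from using $d_{\bar k(x_0)}$ for $x>x_0$ and $d_1$ for $x<x_0$. The main obstacle is Step~1: although Theorem~\ref{th1} gives a clean multi-soliton picture at each fixed characteristic point, a hypothetical sequence $x_n\to x_0$ of such points carries a~priori distinct phases $\bar\theta(x_n)$ that cannot be simultaneously normalized by a single rotation, so the isolation argument must rely on rotation-invariant scalar quantities, chiefly the energy \eqref{defE}.
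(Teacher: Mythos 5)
Your overall reduction-to-the-real-case strategy is correct and matches the paper's intent, but Step~1 has a genuine gap. The isolation of $\q S$ is \emph{not} obtained by an energy-concentration argument of the kind you sketch. An energy count of the type ``each $x_n$ carries at least $2E(\kappa_0,0)$, contradicting the finite budget at $x_0$'' does not work, because the backward cones at the $x_n$ overlap with the cone at $x_0$ and with each other, and the Lyapunov functional \eqref{defE} lives on each such cone separately; this kind of accounting only rules out the interior of $\q S$ being nonempty (that is Proposition~\ref{propS}, adapted from Proposition~4.1 of \cite{MZajm12}), not accumulation. Also, you cite \cite{MZajm12} for isolation, but isolation in the real case comes from \cite{MZdmj12}, whose argument is dynamical, not energetic.

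The mechanism the paper actually uses (Proposition~\ref{Sisol}) is a cascade argument: one fixes a nearby $x_1$, observes that at a suitably large but fixed time $s_0$, $(w_{x_1}(s_0),\partial_s w_{x_1}(s_0))$ is close to the $\bar k(x_0)$-soliton configuration of $x_0$, and then invokes a \emph{stability of the decomposition into decoupled generalized solitons} $e^{i\theta}\kappa^*(d,\nu,y)$ with $\nu = \mu e^s$. For $x_1 \ne x_0$ each generalized soliton acquires a nonzero $\mu$, and, as $s$ increases, the solitons disappear one by one (the heteroclinic connecting $\kappa(d,\cdot)$ to $0$) until only one is left; then one uses the lower bound $E(w_{x_1}(s),\partial_s w_{x_1}(s))\ge E(\kappa_0,0)$ plus the \emph{trapping result near a single soliton} from Azaiez \cite{Atams15} to conclude $x_1\in\q R$. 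None of this appears in your proposal: you invoke Azaiez's theorem at non-characteristic points as if $x_1\in\q R$ were already known, whereas that is precisely what must be proved, and the trapping theorem is the tool that does it. Your worry about simultaneously normalizing the phases $\bar\theta(x_n)$ is actually moot once one adopts the cascade viewpoint, since only one point $x_1$ at a time is compared to $x_0$. Step~2 is a reasonable heuristic and is broadly consistent with what the paper does (Proposition~\ref{propmore} following \cite{MZdmj12}, refined via \cite{CZcpam13}), but you should note that the slope estimate and its refinement both sit downstream of the cascade/trapping machinery, so the gap in Step~1 also blocks Step~2 as written.
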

\begin{rk}
Since $\q S$ is made of isolated blow-up points, for $x$ close enough
to $x_0$, $x\in \q R$. From what we have written just before the statement
of Theorem \ref{th1},
$T'(x)$ is well defined, hence \eqref{t'} is meaningful.
\end{rk}

As one may expect, the proof of these results needs two ``classical'' ideas:\\
- Ideas about the characteristic case, introduced in the real-valued
case in \cite{MZajm12}, \cite{MZdmj12} and \cite{CZcpam13};\\
- Ideas about the complex-valued case, introduced by Azaiez in
\cite{Atams15} in the non-characteristic case; in particular, a
modulation technique is important to handle the zero eigenvalue
appearing from the invariance of equation \eqref{equ} by rotations of
the complex plane.

\medskip

In fact, these ideas are far from being enough, in the sense that the
proof exhibits what we call a ``complex-valued version of the first-order Toda system''
\eqref{toda}, which is a coupled set of ODEs satisfied by the
solitons' centers and phases, see below in \eqref{toda-1}-\eqref{toda-2}. Up to our
knowledge, this is the first time such an ODE system is
derived. Accordingly, our analysis for such a system is completely new
and makes the originality of our paper.
The analogous real-valued first-order Toda system was derived and analyzed by C\^ote and Zaag \cite{CZcpam13}. Their approach, based on comparison principles, does not apply in the complex-valued case.

\bigskip

Let us mention that the usual second order Toda system appears in the context of elliptic PDEs as governing the shape of interfaces for solutions of Allen-Cahn equations, see Del Pino, Kowalczyk and Wei \cite{Pino}.
Furthermore, a second order Toda system with attractive interactions is analysed in \cite{JL9}
as governing the motion of multikinks for a wave equation.

\bigskip

The idea of reducing the description of the dynamical behavior of superpositions of soliton-like objects
to an appropriate system of ODEs is well-established in the literature,
in the contexts of both gradient flows and Hamiltonian PDEs.
Without aiming to be exhaustive, let us mention the works on the Ginzburg-Landau gradient flow \cite{JerSon1, JerSon2, BetOrlSme},
on Bose-Einstein condensates \cite{JerSpi},
on Ginzburg-Landau equation of the Schr\"odinger type \cite{OvSi}
and on wave-type equations \cite{DuMa, Stuart, GuSi06}. We also refer to the monograph \cite{MS} for a detailed account of the literature. Let us mention that, in most cases, Hamiltonian PDEs lead to Hamiltonian ODEs for the solitons' motion, and gradient flow PDEs lead to gradient flow ODEs.
In the present paper, however, due to the self-similar change of variables and a kind of ``damping effect'' which it introduces, the resulting ODE system is not Hamiltonian, even though we start with a wave equation.

\bigskip

We  proceed in several sections to give the proofs of Theorems
\ref{th1} and \ref{cor1}: \\ 
- In Section \ref{SecCompl}, we use the strategy introduced by Merle
and Zaag in \cite{MZajm12} and \cite{MZdmj12} in the real-valued
characteristic case, adapted by the formulation introduced by
Azaiez in \cite{Atams15} in the complex-valued non-characteristic
case, to show that the solution approaches a sum of decoupled
solitons. Moreover, we derive the complex-valued first order Toda system mentioned above.\\
- In Section \ref{appdyn}, we analyze the selfsimilar version of the
equation given below in \eqref{eqw}, and prove the various statements
of Section \ref{SecCompl}.\\
- In Section \ref{SecToda}, we analyze the complex-valued first order
Toda system, and show that asymptotically, our solution behaves like
the real-valued case, up to a rotation of the complex plane.\\   
- In Section \ref{SecConcl}, since we know that the question is
reduced to the real-valued case, we use the former results of the
real-valued characteristic case from Merle and Zaag in \cite{MZajm12,
  MZdmj12} and C\^ote and Zaag in \cite{CZcpam13}, together with the
analysis of the complex-valued case in Azaiez \cite{Atams15}, in order
to conclude the proofs of Theorems \ref{th1} and \ref{cor1}. 

\section{Approaching a sum of decoupled solitons}\label{SecCompl}
In this section, we  use the strategy introduced by Merle
and Zaag in \cite{MZajm12} and \cite{MZdmj12} in the real-valued
characteristic case, adapted by the formulation introduced by
Azaiez in \cite{Atams15} in the complex-valued non-characteristic
case, to show that the solution approaches a sum of decoupled
solitons. Moreover, we derive the complex-valued first order Toda system mentioned above.

\bigskip

We proceed in several steps, each given in a separate subsection:\\
- In Section \ref{secstep1}, we show that the solution decomposes into
a sum of $k\ge 0$ solitons, which are decoupled in some sense, if ever
$k\ge 2$. This step follows as in the real-valued case.\\
- In Section \ref{secstep2}, assuming $k\ge 2$, we linearize the
equation around the sum of solitons, and exhibit the ODE system
satisfied by the solitons' centers and phases. 

\bigskip

\subsection{Decomposition into a sum of decoupled solitons}\label{secstep1}

This step follows exactly as in the real-valued
 case, with the exception of the characterization of self-similar
solutions of equation \eqref{equ}, which is more involved and follows
from Azaiez \cite{Atams15} (see Proposition \ref{propchar} below).  This is our statement:
\begin{proposition}[Decomposition of the solution into a sum of decoupled solitons]\label{propdecomp}
Consider $x_0\in \mathcal{S}$. Then,
\[
\left\|\begin{pmatrix} w_{x_0}(s)\\\partial_s w_{x_0}(s) \end{pmatrix} -\sum_{i=1}^{k}e^{i\theta_i(s)}\begin{pmatrix} \kappa(\bb_i (s),\cdot)\\0\end{pmatrix} \right\|_{\mathcal{ H}}\rightarrow 0\mbox{ as }s\rightarrow \infty
\]
for some $k=k(x_0)\ge 0$, and continuous functions $\theta_i(s)\in \mathbb{R}$ and $d_i(s)=-\tanh \zeta_i(s) \in (-1,1)$, for $i=1,...,k$ with
\begin{equation}\label{separation}
 \zeta_1(s)<...<\zeta_k(s) \mbox{ and }\zeta_{i+1}(s)-\zeta_i(s) \rightarrow \infty \mbox{ for all }i=1,...,k-1.
\end{equation}
\end{proposition}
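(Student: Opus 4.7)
The plan is to follow the strategy of Merle and Zaag for the real-valued characteristic case \cite{MZajm12, MZdmj12}, with the only substantial modification arising at the step where the rotational invariance $u \mapsto e^{i\theta} u$ of equation \eqref{equ} plays a role.

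First I would establish uniform bounds and extract subsequential profiles. The self-similar change of variables \eqref{defw} converts \eqref{equ} into an evolution for $w_{x_0}$ on the infinite cylinder $(-1,1)\times[-\log T(x_0), \infty)$, for which the functional $E$ from \eqref{defE} is non-increasing in $s$ (with a dissipation term of the same complex-valued shape as in the real case). Combined with the classical no-blowup-in-the-interior bound, this yields a uniform $\mathcal{H}$-bound on the trajectory $\{(w_{x_0}(s), \partial_s w_{x_0}(s))\}_{s\ge s_0}$. Given any sequence $s_n \to \infty$, weak compactness in $\mathcal{H}$ together with the Lyapunov structure (applied to $s$-translates on bounded $s$-intervals) produces a subsequence along which the profile converges to a stationary solution of the self-similar equation. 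The key new input in the complex-valued setting is Proposition \ref{propchar} (drawn from Azaiez \cite{Atams15}), which classifies those stationary solutions: each is either identically zero or of the form $(e^{i\theta}\kappa(d,\cdot), 0)$ for some $\theta\in\mathbb{R}$ and $d\in(-1,1)$. This replaces the simpler real-valued classification used in \cite{MZajm12}.

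Next, I would iterate the profile-extraction argument. After subtracting the first extracted soliton (with its translation in $y$ and rotation by $e^{i\theta}$) from $w_{x_0}$, the remainder has energy strictly smaller by $E(\kappa_0,0)$, so the extraction can be repeated. The procedure terminates since $E(\kappa_0,0)>0$ bounds the number $k$ of solitons. The translations produced by the extraction force $|\zeta_{i+1}(s_n)-\zeta_i(s_n)|\to\infty$ along the subsequence, which is the decoupling \eqref{separation}. To promote this subsequential decomposition to a statement holding for every $s$, and to define $d_i(s),\theta_i(s)$ as continuous functions, I would apply modulation theory: impose $2k$ orthogonality conditions transverse to the $2k$-dimensional orbit of the multi-soliton ansatz generated by translations $d_i$ and rotations $\theta_i$, and use the implicit function theorem to parametrize the solution uniquely inside a tubular neighborhood of that orbit. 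Continuity of $(d_i,\theta_i)$ in $s$ then follows from continuity in $s$ of the trajectory in $\mathcal{H}$.

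The main obstacle, compared to the real case, is the added dimension of the stationary manifold coming from rotational symmetry. Beyond needing the more delicate classification of Proposition \ref{propchar}, this introduces $k$ extra phase parameters and doubles the dimension of the modulation system. One must verify non-degeneracy of the linear system defining the $2k$ modulation parameters, which amounts to invertibility of a $2k\times 2k$ block matrix whose off-diagonal blocks decay exponentially in the separations $\zeta_{i+1}-\zeta_i$; this holds precisely once \eqref{separation} is in force, giving the usual bootstrap consistency. A further subtlety is the coherent choice of $\theta_i(s)$ across $s$: since phases are defined modulo $2\pi$, one must lift to the universal cover and rely on continuity of the modulation map. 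I expect the classification of complex-valued stationary solutions (already available in \cite{Atams15}) together with this modulation bookkeeping to be the only genuinely new ingredients, the rest transposing verbatim from \cite{MZajm12, MZdmj12}.
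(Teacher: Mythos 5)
Your proposal correctly identifies the single genuinely new ingredient for the complex-valued case: the classification of all $\mathcal{H}_0$-stationary solutions of \eqref{eqw}, namely Lemma \ref{propchar} from Azaiez \cite{Atams15}. It also correctly starts from the Lyapunov structure of $E$. Up to that point the spirit aligns with the paper, which explicitly reduces to the real-valued argument of Theorem 2 in \cite{MZjfa07} (not \cite{MZajm12, MZdmj12}; those enter only later, for the characteristic-specific analysis) with exactly this classification as the new input. You do omit two of the paper's five stated building blocks --- the Antonini--Merle blow-up criterion yielding $E\geq 0$, and the boundedness of each individual term appearing in $E$ (Proposition 3.5 of \cite{MZjfa07}) --- both of which are needed to make the compactness argument work.

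The substantive gap is the iterative extraction step. You write: ``After subtracting the first extracted soliton from $w_{x_0}$, the remainder has energy strictly smaller by $E(\kappa_0,0)$, so the extraction can be repeated.'' This is not how the argument of \cite{MZjfa07} proceeds, and as stated it does not close. The functional $E$ is a Lyapunov functional \emph{for solutions of \eqref{eqw}}; once you subtract $e^{i\theta_1}(\kappa(d_1),0)$, the remainder is no longer a solution, so the Lyapunov property cannot be applied to it, and the profile-extraction machinery cannot be re-run on the remainder. (Secondarily, $E$ is not quadratic, so the asserted energy additivity is a consequence of spatial separation that must be proved, not a given.) What Merle and Zaag actually do --- and what the present paper follows --- is study \emph{local limits in space} of a single, unmodified $w$: restrict $w(\cdot,s_n)$ to intervals near various $y_0\in(-1,1)$, pass to subsequential limits which (by the Lyapunov property) are stationary, and use the finiteness and nonnegativity of $E$ together with the rigidity of $\kappa$ to conclude the non-zero local limits are finitely many with separating parameters as in \eqref{separation}. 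Finally, the implicit-function-theorem modulation with $2k$ orthogonality conditions that you describe is genuine and is carried out in the paper, but at the next stage (Lemma \ref{lemode0} in Section \ref{sec2}), not in the proof of Proposition \ref{propdecomp} itself.
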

\begin{rk}
If $k=0$, then the sum in this statement has to be understood as $0$
and we have no parameters $\theta_i(s)$ and $d_i(s)$. If $k=1$, then,
we have only one parameter $(\theta_1(s), d_1(s))$ and condition
\eqref{separation} is irrelevant.
\end{rk}
\begin{proof}
  The proof is exactly the same as the real case (see Theorem 2 page 47 in
\cite{MZjfa07}). Let us briefly
  recall its main ingredients. For simplicity, we write $w(y,s)$
  instead of $w_{x_0}(y,s)$.\\
  - First of all, we need to write the PDE satisfied by  the similarity variables version defined in
\eqref{defw}, which is the same as in the real case: for all $|y|<1$ and $s\ge -\log T(x_0)$:
\begin{equation} \label{eqw}
 \partial^2_{s} w=\mathcal{L}w-\frac{2(p+1)}{(p-1)^2}w+|w|^{p-1}w-\frac{p+3}{p-1} \partial_s w- 2 y \partial_{ys} w
\end{equation}
where
 \begin{equation}\label{8}
\mathcal{L} w=\frac{1}{\rho}\partial_y
   (\rho (1-y^2)\partial_y w)
 \end{equation}
and $\rho$ is defined in \eqref{defro}.\\
- Second, we note that the energy functional given in \eqref{defE} is
 well defined in the space $\mathcal H$, thanks to a Hardy-Sobolev
 identity proved in the Appendix of \cite{MZajm03}. More importantly,
 it is a Lyapunov functional for equation \eqref{eqw}, in the sense that
 \[
\frac d{ds} E(w(s),\partial_sw(s))= - \frac 4{p-1} \int_{-1}^1
\frac{|\partial_s w(s)|^2}{1-y^2}\rho dy, 
 \]
 exactly as in the real case. In particular, this functional is
 bounded from above by its initial value.\\
 - Third, given that $w(y,s)$ is by construction defined for all
 $(y,s)\in (-1,1)\times[-\log T(x_0), +\infty)$, it follows that this
 Lyapunov functional is nonnegative, thanks to the following blow-up
 criterion related to equation \eqref{eqw}, which follows exactly as
 in the real-valued case: 
\begin{lemma}[A blow-up criterion for equation \eqref{eqw}]
  Assume that $W(y,s)$ is a solution of equation \eqref{eqw} with
  $E(W(s_0))<0$, for some $s_0\in \bR$. Then, $W(y,s)$ cannot be
  defined for all $(y,s)\in (-1,1)\times[s_0,+\infty)$. 
  \end{lemma}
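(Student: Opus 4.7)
The plan is to run a Levine-type concavity argument in self-similar variables, adapting the scheme of Antonini--Merle and Merle--Zaag to the complex-valued, damped equation \eqref{eqw}. I argue by contradiction: suppose that $W$ is defined on $(-1,1)\times[s_0,+\infty)$. By the Lyapunov identity recalled just before the lemma, $E(W(s),\partial_s W(s))$ is non-increasing in $s$, hence $E(W(s),\partial_s W(s))\le E(W(s_0),\partial_s W(s_0))=:E_0<0$ for all $s\ge s_0$; in particular,
\[
\tfrac{1}{p+1}\int_{-1}^1|W|^{p+1}\rho\,dy\ \ge\ |E_0|+\int_{-1}^1\Bigl(\tfrac12|\partial_sW|^2+\tfrac12|\partial_yW|^2(1-y^2)+\tfrac{p+1}{(p-1)^2}|W|^2\Bigr)\rho\,dy.
\]

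Next, introduce $M(s):=\int_{-1}^1|W(y,s)|^2\rho(y)\,dy$, so $M'(s)=2\,\mathrm{Re}\int W\overline{\partial_s W}\rho\,dy$. Differentiating again, substituting $\partial_s^2 W$ from \eqref{eqw}, and integrating by parts in $y$ against $\rho$ (all boundary terms vanish since $\rho(1-y^2)=(1-y^2)^{(p+1)/(p-1)}$ vanishes at $y=\pm1$), I would obtain
\[
M''(s)=2\!\int\!|\partial_sW|^2\rho-2\!\int\!|\partial_yW|^2(1-y^2)\rho-\tfrac{4(p+1)}{(p-1)^2}\!\int\!|W|^2\rho+2\!\int\!|W|^{p+1}\rho+\mathcal R(s),
\]
where $\mathcal R(s)$ collects the contributions of the damping $-\tfrac{p+3}{p-1}\partial_sW$ and the transport $-2y\partial_{ys}W$; using $\partial_y\rho=-\tfrac{4y}{(p-1)(1-y^2)}\rho$, one checks that $\mathcal R$ reduces to a clean multiple of $M'(s)$ plus cross terms that can be dominated, via Cauchy--Schwarz, by the dissipation $\int\tfrac{|\partial_sW|^2}{1-y^2}\rho\,dy$ appearing in $-\tfrac{d}{ds}E$. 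Replacing $\int|W|^{p+1}\rho$ from below via $E\le E_0<0$, and using $(M'(s))^2\le 4M(s)\int|\partial_sW|^2\rho\,dy$, one is led to a differential inequality of the form
\[
\tilde M(s)\tilde M''(s)-(1+\delta)\bigl(\tilde M'(s)\bigr)^2\ \ge\ c\,|E_0|\,\tilde M(s),\qquad \delta=\tfrac{p-1}{4},
\]
for an appropriate modification $\tilde M$ of $M$ (say $\tilde M(s)=M(s)+\alpha(s-s_0)+\beta$ or $\tilde M(s)=e^{\lambda s}M(s)$) chosen to absorb $\mathcal R$. The classical concavity argument then says $\tilde M^{-\delta}$ is strictly decreasing and concave, hence must vanish at some finite $s$, forcing $\tilde M(s)\to+\infty$ in finite time and contradicting the assumed global $\mathcal{H}$-existence.

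The main technical obstacle will be the bookkeeping of $\mathcal R(s)$: the damping and transport terms are the new features distinguishing \eqref{eqw} from the plain wave equation. The coefficients $\tfrac{p+3}{p-1}$ and $\tfrac{2(p+1)}{(p-1)^2}$, together with the weight $\rho$, are tuned precisely so that $E$ is a Lyapunov functional, and the same tuning should allow the cross terms in $M''$ to be absorbed by the Lyapunov dissipation after the right choice of $\tilde M$. Once this algebraic cancellation is secured, the remainder of the argument is Levine's classical blow-up scheme, applied now in the self-similar variable $s$.
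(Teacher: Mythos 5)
The paper's own ``proof'' of this lemma is a one-line citation to Theorem 2 of Antonini--Merle \cite{AMimrn01}, whose argument is indeed a Levine-type concavity argument in self-similar variables; so your high-level strategy is the right one. However, your sketch has a genuine gap precisely at the point you yourself flag as ``the main technical obstacle'': you never actually verify that $\mathcal R(s)$ can be absorbed, and this absorption is the entire content of the cited proof. Let me be concrete. After integrating by parts, the transport term $-4\Re\int Wy\overline{\partial_{ys}W}\rho\,dy$ does not reduce to ``a clean multiple of $M'(s)$ plus cross terms''; using $\partial_y\rho=-\frac{4y}{(p-1)(1-y^2)}\rho$ one finds, in addition to multiples of $M'(s)$, the two terms $4\Re\int y\rho\,\partial_yW\,\overline{\partial_sW}\,dy$ and $-\frac{16}{p-1}\Re\int W\overline{\partial_sW}\frac{y^2}{1-y^2}\rho\,dy$. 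The second of these is the troublesome one: a Cauchy--Schwarz estimate requires controlling $\int\frac{|W|^2}{1-y^2}\rho\,dy$, which is \emph{not} $M(s)=\int|W|^2\rho\,dy$ but rather a quantity of Hardy--Sobolev type, so the functional you run the concavity argument on cannot simply be $M$ (or an affine/exponential modification of it). This is not mere bookkeeping; it forces a different choice of functional.

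There is also a circularity in your proposed absorption mechanism. You want to dominate the cross terms by the dissipation $\int\frac{|\partial_sW|^2}{1-y^2}\rho\,dy=-\frac{p-1}{4}\frac{d}{ds}E$, but in the regime $E<0$ you are trying to \emph{prove} that $E$ escapes to $-\infty$ in finite time, so the time-integral of the dissipation over $[s_0,s]$ equals $\frac{p-1}{4}(E(s_0)-E(s))$ and is a priori unbounded; you cannot use it as a bounded quantity to close a pointwise differential inequality. The Antonini--Merle argument sidesteps this by choosing the auxiliary functional so that the problematic terms are either matched algebraically or reabsorbed on the \emph{same side} of the inequality rather than compared against a separately-bounded dissipation. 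As written, your proposal identifies the target inequality $\tilde M\tilde M''-(1+\delta)(\tilde M')^2\gtrsim|E_0|\tilde M$ but never produces $\tilde M$, and the assertion that the damping/transport terms ``should'' be absorbed by the same tuning that makes $E$ a Lyapunov functional is a plausible hope rather than a proof. To complete the argument you would need to actually carry out the integration by parts above, fix the functional (the correct choice involves the $\mathcal H_0$-type quantity, not just $L^2_\rho$), and exhibit the cancellation; alternatively, simply cite Antonini--Merle as the paper does, noting that the extension to complex-valued $W$ is immediate since the argument only uses $|W|$, $|\partial_s W|$, $|\partial_y W|$ and the energy identity.
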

  \begin{proof} 
See Theorem 2 page 1147 in Antonini and Merle \cite{AMimrn01}.
  \end{proof}
 \noindent - Fourth, the boundedness of all terms in the function $E(w(s), \partial_s
  w(s))$, by the argument of \cite{MZajm03} and also \cite{MZimrn05},
  completed by Proposition 3.5 page 66 in \cite{MZjfa07}.\\
  - Fifth, and this is a specific argument to the complex-valued case,
 we recall from the complex-valued non-characteristic case treated by
 Azaiez \cite{Atams15} that we can characterize all the stationary
 solutions of equation \eqref{eqw} in the energy space as follows:
 \begin{lemma}[Characterization of all stationary solutions of
   equation \eqref{eqw} in the energy space $\q H_0$ \eqref{10}] \label{propchar} Consider $w\in {\mathcal H}_0$ a
   stationary solution of equation \eqref{eqw}. Then:\\
   - either $w\equiv 0$,\\
   - or
$w(y) = e^{i\theta}\kappa(d,y)$,
   for some $\theta\in \bR$ and $d\in(-1,1)$, where $\kappa(d,y)$ is
   defined in \eqref{defk}.
 \end{lemma}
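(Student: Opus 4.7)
The plan is to reduce the complex-valued problem to the known real-valued classification by extracting a conserved current from the phase invariance of the equation. Setting $\partial_s w=0$ in \eqref{eqw}, a stationary solution $w\in\mathcal{H}_0$ satisfies
\[
\mathcal{L}w-\frac{2(p+1)}{(p-1)^2}w+|w|^{p-1}w=0,
\]
and by standard elliptic regularity $w$ is smooth on $(-1,1)$. Since the nonlinearity and the linear potential depend only on $|w|$, the equation is invariant under multiplication by $e^{i\alpha}$, and this symmetry generates a first integral.

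To extract it, I would write $w=u+iv$ with $u,v$ real and compute the imaginary part of $\bar w$ times the equation: the terms $-\tfrac{2(p+1)}{(p-1)^2}|w|^2+|w|^{p+1}$ are real, so
\[
0=\mathrm{Im}(\bar w\,\mathcal{L}w)=u\,\mathcal{L}v-v\,\mathcal{L}u=\frac{1}{\rho}\,\partial_y\bigl(\rho(1-y^2)(uv'-vu')\bigr).
\]
Hence the ``Wronskian'' $J(y):=\rho(y)(1-y^2)\,\mathrm{Im}(\bar w(y)w'(y))$ is constant on $(-1,1)$.

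The crucial step is to show $J\equiv 0$. I would argue via boundary behaviour: a Frobenius-type analysis of the Fuchsian equation at the regular singular points $y=\pm1$ shows that any solution in $\mathcal{H}_0$ must be bounded near the boundary (the other indicial root gives a singular solution incompatible with $\|w\|_{\mathcal{H}_0}<\infty$). Once $w$ is bounded, rewriting the equation as $(\rho(1-y^2)w')'=\rho\bigl(\tfrac{2(p+1)}{(p-1)^2}w-|w|^{p-1}w\bigr)$ yields that $\rho(1-y^2)w'$ has finite limits at $y=\pm1$; a second bootstrap argument (integration combined with boundedness of $w$ and the non-integrability of $(\rho(1-y^2))^{-1}$ at the endpoints) forces those limits to vanish. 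Consequently $J(y)\to 0$ at $y=\pm1$, and being constant it must be identically zero.

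Given $J\equiv 0$, the identity $uv'-vu'=0$ implies that on any open subinterval where $w\neq 0$ the argument of $w$ is constant. Combined with unique continuation for the ODE (a solution vanishing on an open set is identically zero) and connectedness of $(-1,1)$, this yields a single $\theta\in\mathbb{R}$ such that $\tilde r:=e^{-i\theta}w$ is real-valued on $(-1,1)$. Then $\tilde r\in\mathcal{H}_0$ is a real-valued stationary solution, and the classical real-valued classification (recalled in Azaiez \cite{Atams15}) gives $\tilde r\equiv 0$ or $\tilde r=\pm\kappa(d,\cdot)$ for some $d\in(-1,1)$; absorbing the sign into $\theta$ gives the stated form. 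The principal obstacle is the vanishing of $J$: it is not a direct consequence of $w\in\mathcal{H}_0$, but requires a boundary-regularity/Fuchsian analysis at $y=\pm1$ to rule out the singular indicial branch and to control the flux $\rho(1-y^2)w'$ at the endpoints.
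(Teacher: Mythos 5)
The paper gives no proof of this lemma at all: it simply defers to Proposition 2 of Azaiez \cite{Atams15}, noting only that the argument is ``involved and very different from the real-valued case.'' So the useful thing is to assess whether your sketch would actually constitute a proof.

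Your central idea is correct and is, in fact, the same mechanism that drives Azaiez's proof: the phase invariance of the stationary equation produces a Noether-type conserved quantity $J(y) = \rho(1-y^2)\,\mathrm{Im}(\bar w w')$, and $J\equiv 0$ forces $e^{-i\theta}w$ to be real-valued, reducing the problem to the real classification (which, to be precise, is due to Merle--Zaag \cite{MZjfa07}, not Azaiez). Your reduction step at the end is fine, including the remark about simple zeros and the sign absorbed into $\theta$.

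The gap is exactly where you say it is, but the tool you propose is not quite the right one. A ``Frobenius-type analysis at the regular singular points $y=\pm1$'' applies to linear Fuchsian ODEs, whereas the stationary equation is nonlinear: one cannot simply read off indicial roots, since the term $|w|^{p-1}w$ changes the leading balance if $w$ is unbounded, and the dichotomy ``bounded branch vs.\ singular branch'' is not a theorem in the nonlinear setting. The clean route is the Emden--Fowler change of variables $z=\mathrm{arctanh}\,y$, $v(z)=(1-y^2)^{1/(p-1)}w(y)$, which turns the equation into the autonomous ODE
\begin{equation*}
v'' - \tfrac{4}{(p-1)^2}\,v + |v|^{p-1}v = 0 \quad\text{on }\bR,
\end{equation*}
and under which $J$ becomes the literal angular momentum $M=\mathrm{Im}(\bar v v')$. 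The membership $w\in\mathcal{H}_0$ translates to $\int|v|^2\operatorname{sech}^2 z\,\vd z + \int\big|\tfrac{2}{p-1}\tanh z\,v+v'\big|^2\,\vd z<\infty$; from the second integral one deduces (Gronwall/Cauchy--Schwarz on $(e^{2z/(p-1)}v)'$, plus uniform continuity of $v'$ from the ODE) that $v(z),v'(z)\to 0$ as $|z|\to\infty$. Hence $M\to 0$ at both ends, and being constant, $M\equiv 0$. This is the rigorous replacement for your Frobenius/bootstrap step, and it also delivers the second conserved quantity $H\equiv 0$ which one needs when invoking the real-valued classification. So your proposal has the correct skeleton, but the crucial vanishing of $J$ needs the autonomous reformulation and the $L^2$-based decay argument, not a Fuchsian indicial computation.
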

 \begin{proof}
The proof is involved and very different from the real-valued case. It
was already given by Azaiez in Proposition 2 in \cite{Atams15}. 
\end{proof}

With these 5 ingredients, one can follow the real-valued proof in
\cite{MZjfa07} to show that $w(y,s)$ has local limits in space, as
$s\to \infty$ which
are stationary solutions, thanks to the Lyapunov functional property. The rigidity of the shape of
$\kappa(d,y)$ implies that non-zero local limits have their parameters
separated as in \eqref{separation}. The fact that the Lyapunov
functional is finite implies that non-zero local limits are in
finite number. For details, see Section 3.2 page 66 of \cite{MZjfa07}.
\end{proof}
\subsection{Dynamics of the equation around the sum of solitons}\label{secstep2}
Consider $x_0\in \q S$ and  $k(x_0)$ the number of solitons appearing
in the decomposition of $w_{x_0}(y,s)$ given in Proposition \ref{propdecomp}. 
This section is dedicated to the study of the case where
\[
k\equiv k(x_0)\ge 2.
\]
More precisely, we linearize equation \eqref{eqw} around the sum of
solitons given in Proposition \ref{propdecomp}, in order to derive the
equations \eqref{toda-1}-\eqref{toda-2} satisfied by the solitons'
center and phase.  As the solitons in Proposition \ref{propdecomp} are
decoupled, thanks to \eqref{separation}, we will reduce to the case of
a linearization around one single soliton, which justifies Step 1 below.

\medskip

In fact, we proceed in 3 steps, each given in a different section:\\
- First, we recall in Section \ref{sec1}
from \cite{Atams15} the spectral properties of the
linearized operator around a single
soliton. In particular, we recall that it has one positive eigenvalue
($\lambda=1$) together with 2 zero modes.\\
- Then, we introduce in Section \ref{sec2}
a modulation technique to slightly change the
solitons' centers and phases, so that we kill the 2 zero modes
of the linearized operator around each soliton in the decomposition given
in Proposition \ref{propdecomp}.\\
- Finally, in Section \ref{sec3},
we linearize equation \eqref{eqw} around the sum
of the solitons (with the new set of parameters and phases), and show
that the error term is controlled by a function involving the distance
between the solitons' centers. We also derive a refined
ODE system satisfied by the modulation parameters.

\subsubsection{The linearized operator
  around one single  soliton}\label{sec1}

In this step, we recall from \cite{MZjfa07} and \cite{Atams15} the spectral properties of
the linearized operator of equation \eqref{eqw} around one single
soliton. For proofs and details, see Section 4 in \cite{MZjfa07} and Section 3 in \cite{Atams15}.
Consider any complex-valued solution $w(y,s)$ of equation \eqref{eqw} defined for all
$(y,s)\in (-1,1) \times [s_0, +\infty)$ for some $s_0\in \bR$
and a soliton $\kappa(d,y)$ defined in
  \eqref{defk}, with $d\in (-1,1)$.

\medskip

For any complex number $z$ (in particular for all our complex-valued functions), we use in the following the notation
\begin{equation}\label{riz}
  \check z=\Re z,\;\;\tilde z=\Im z
  \mbox{ and the standard notation } \bar z = \Re z - i \Im z.
\end{equation}
If we introduce
$q=(q_1,q_2)=\begin{pmatrix}{q_1}\\{q_2}\end{pmatrix}\in
\mathbb{C}\times \mathbb{C}$ by
\begin{eqnarray*}
\begin{pmatrix} w(y,s)\\\partial_s w(y,s) \end{pmatrix} =\begin{pmatrix} \kappa(d,y)\\0\end{pmatrix} +\begin{pmatrix} q_1(y,s)\\q_2(y,s)\end{pmatrix},
\end{eqnarray*}
then, we see from \eqref{eqw} that the real and imaginary parts of $q$ satisfy the
following equations, for all $s\ge s_0$:
\begin{eqnarray}\frac{\partial }{\partial s}\begin{pmatrix} \check q_1\\\check q_2
\end{pmatrix}=\check L_\bb  \begin{pmatrix} \check q_1 \\\check q_2 \end{pmatrix}+\begin{pmatrix}0\\ \check{f_\bb }\end{pmatrix},
\end{eqnarray}
where
\begin{eqnarray}\label{barL_bb }
\check L_\bb  \begin{pmatrix} \check q_1\\\check q_2 \end{pmatrix}=
\begin{pmatrix}\check q_2 \\\mathcal{L} \check q_1+\check\psi (\bb , y)\check q_1-\frac{p+3}{p-1} \check q_2- 2 y \partial_y \check q_2
\end{pmatrix},
\end{eqnarray}
and
\begin{eqnarray}\frac{\partial }{\partial s} \begin{pmatrix} \tilde{q_1}\\\tilde q_2
\end{pmatrix}=\tilde L_\bb \begin{pmatrix} \tilde q_1\\\tilde q_2 \end{pmatrix}+\begin{pmatrix} 0\\\tilde{f_\bb } \end{pmatrix}, 
\end{eqnarray}
where
\begin{eqnarray}\label{tildeL_bb }
 \tilde L_\bb \begin{pmatrix} \tilde q_1\\\tilde q_2 \end{pmatrix}=\begin{pmatrix} \tilde q_2\\\mathcal{L}\tilde q_1+\tilde\psi (\bb , y)\tilde q_1-\frac{p+3}{p-1} \tilde q_2- 2 y \partial_y \tilde q_2 \end{pmatrix}.
\end{eqnarray}
The potentials are given by
\begin{equation}\label{defpsi}
  \check\psi (\bb , y)=p \kappa(\bb ,y)^{p-1}-\frac{2(p+1)}{(p-1)^2}
  \mbox{ and }
  \tilde\psi (\bb , y)= \kappa(\bb , y)^{p-1}-\frac{2(p+1)}{(p-1)^2}.
\end{equation}
The terms $\check f_d$ and $\tilde f_d$ are quadratic.
\paragraph{Spectral properties of  $\check L_\bb $.}

From Section 4 in
\cite{MZjfa07},
we know that $\check L_\bb $ has two nonnegative eigenvalues $\lambda=1$ and $\lambda=0$ with eigenfunctions
\begin{equation}
\check F_1 (\bb ,y)= (1-\bb^2)^{\frac{p}{p-1}}\begin{pmatrix}(1+\bb y)^{-\frac{p+1}{p-1}}\\(1+\bb y)^{-\frac{p+1}{p-1}}\end{pmatrix}
\mbox{and }\;
\check F_0(\bb ,y)= (1-\bb^2)^{\frac{1}{p-1}}\begin{pmatrix}\frac{y+\bb }{(1+\bb y)^\frac{p+1}{p-1}}\\0\end{pmatrix}. \label{110}
\end{equation}
Note that for some $C_0>0$ and any $\lambda\in \{0,1\}$, we have
\begin{equation}\label{majoration}
\forall |\bb|<1,\;\; \frac{1}{C_0}\leq \|\check F_\lambda(\bb) \|_{\mathcal{H}} \leq C_0 \;\mbox{  and  }\; \|\partial_\bb  \check F_\lambda(\bb) \|_{\mathcal{H}} \leq \frac{C_0}{1-\bb^2}.
\end{equation}
Introducing the inner product $\phi$ associated to the real case of the
Hilbertian norm \eqref{defnh}  defined by
\begin{equation}\label{defphi}
 \phi (q, r)=\phi \left(\begin{pmatrix} q_1\\q_2 \end{pmatrix}, \begin{pmatrix} r_1\\r_2 \end{pmatrix}\right)=\int_{-1}^{1}(q_1 r_1+q'_1  r'_1(1-y^2)+q_2  r_2)\rho(y)\;dy,
\end{equation}
we may consider the conjugate operator of $\check L_\bb $ with
respect to $\phi$, and introduce its eigenfunctions  $\check W_{\lambda}$ corresponding to
the the two nonnegative eigenvalues $\lambda=0$ and $\lambda=1$:
\begin{equation}\label{defwl2}
\check W_{1, 2}(\bb  ,y)= \check c_1 \frac{(1-y^2)(1-\bb^2 )^\frac{1}{p-1}}{(1+\bb y)^\frac{p+1}{p-1}},\,\check W_{0, 2}(\bb  ,y)= \check c_0 \frac{(y+\bb )(1-\bb^2 )^\frac{1}{p-1}}{(1+\bb y)^\frac{p+1}{p-1}},
\end{equation}
with\footnote{ In section 4 of
\cite{MZjfa07}
  we had non explicit normalizing constants $\check c_\lambda=\check
  c_\lambda (\bb )$. In Lemma 2.4 page 33 in
\cite{MZtams16}
  the authors compute the explicit dependence of $\check c_\lambda (\bb )$.}
\begin{equation}\label{defcl}
\frac{1}{\check c_\lambda}=2(\frac{2}{p-1}+\lambda)\int_{-1}^1 (\frac{y^2}{1-y^2} )^{1-\lambda}\rho(y) \,dy,
\end{equation}
and $\check W_{\lambda, 1}(\bb,\cdot)$ is the unique solution of the equation 
\begin{equation}\label{elliptic}
-\mathcal{L} r+ r =\left(\lambda-\frac{p+3}{p-1}\right) r_2- 2 y r'_2 + \frac{8}{p-1} \frac{r_2}{1-y^2}
\end{equation}
with $r_2= \check W_{\lambda, 2}(\bb,\cdot)$. Note that we have the
following estimates for $\lambda=0$ or $\lambda=1$:
\begin{equation}\label{majoration'}
\|\check W_\lambda(d)\|_{\q H}\le C,\;\;
  \phi (\check W_\lambda(\bb,\cdot) ,\check F_\lambda(\bb,\cdot) )=1\mbox{ and }\phi (\check W_\lambda (\bb,\cdot),\check F_{1-\lambda}(\bb,\cdot) )=0.
\end{equation}
Finally, we naturally introduce for $\lambda \in \{0, 1\}$ the
(real-valued) projectors $\check \pi^\bb _\lambda(r)$
for any $r\in\mathcal{H}$ by
\begin{equation} \label{barpi}
\check \pi_\lambda^\bb  (r)=\phi (\check W_\lambda (\bb,\cdot) , r).
\end{equation}
We also recall that
\begin{equation}\label{pld1}
\check \pi_\lambda^\bb  (\check L_\bb r)=\lambda \check \pi_\lambda^\bb  (r).
\end{equation}
Note that the analysis of the operator $\check L_d$ in Merle and Zaag
\cite{MZjfa07} shows that the eigenfunctions in \eqref{110} are in
some sense the only nonnegative directions. In order to control the
projection of any real-valued $r\in \q H$ on the negative part
of the spectrum of $\check L_d$, we introduce the following supplemental
of those directions defined by
\begin{equation*}
\check{\mathcal{H}}_{-}^d \equiv \{r \in \mathcal{H} \,|\, r\mbox { is
  real-valued}, \check \pi_1^d (r)=\check \pi_0^d(r)=0\},
\end{equation*}
together with the following bilinear form
\begin{eqnarray}\label{134'}
 \check \varphi_{d} (q,r)&=&\int_{-1}^{1} (-\check \psi(d,y) q_1r_1+q_1' r_1'(1-y^2)+q_2 r_2 ) \rho dy,
\end{eqnarray}
where $\check \psi(d,y)$ is defined in \eqref{defpsi}. Then, we recall from
Proposition 4.7 page 90 in Merle and Zaag \cite{MZjfa07}
that there exists $C_0>0$ such that for all $|d|<1$, for all
real-valued $r\in \check{\mathcal{H}}^d_{-},$
\begin{align}\frac{1}{C_0} \|r\|_{\mathcal{H} }^2\le \check \varphi_d (r,r)\le C_0 \|r\|_\mathcal{H} ^2.
\label{36,5}
\end{align}

\paragraph{Spectral properties of $\tilde L_\bb $.}
From Section 3 in \cite{Atams15}, we know that $\tilde L_\bb$ has
$\lambda=0$ as eigenvalue, with the following eigenfunction
\begin{equation}\label{F}
\tilde F_0(\bb  ,y)= \begin{pmatrix}\kappa (\bb ,y)\\0\end{pmatrix}.
\end{equation}
Moreover, it holds that 
\begin{equation}\label{majoration1}
\forall |\bb|<1,\;\; \frac{1}{C_0}\leq \|\tilde F_0(\bb,\cdot)\|_{\mathcal{H}} \leq C_0 \;\mbox{  and  }\; \|\partial_\bb  \tilde F_0(\bb,\cdot)\|_{\mathcal{H}} \leq \frac{C_0}{1-\bb^2}.
\end{equation}
for some $C_0>0$.\\
We also know that the conjugate operator of $\tilde L_\bb$ has also
$\lambda=0$ as eigenvalue, with the following eigenfunction  $\tilde
W_0 \in \mathcal{H}$ continuous in terms of $\bb$,
where 
\begin{equation}\label{W}
\tilde W_{0, 2}(\bb  ,y)= \tilde c_0 \kappa(\bb , y) \mbox{ and }\frac{1}{\tilde c_0}=\frac{4\kappa_0^2}{p-1} \int_{-1}^1 \frac{\rho(y)}{1-y^2}dy
\end{equation}
and $\tilde W_{0, 1}$ is the unique solution of equation
\eqref{elliptic} 
with $\lambda=0$ and $r_2= \tilde W_{0, 2}$. Moreover, we have
\begin{equation}\label{1}
\phi (\tilde W_0,\tilde{F_0})=1.
\end{equation}
Finally, we naturally introduce the (real-valued) $\H$ projector on
$\tilde F_0(\bb,y)$ by
\begin{equation} \label{pi}
\tilde \pi_0^\bb  (r)=\phi (\tilde W_0(\bb,\cdot), r).
\end{equation}
We also recall that
\begin{equation}\label{pld2}
  \tilde \pi_0^\bb  (\tilde L_\bb r)=0.
\end{equation}
Note that the analysis of the operator $\tilde L_d$ in Azaiez
\cite{Atams15} shows that the eigenfunction in \eqref{F} is in some
sense the only nonnegative direction. In order to control the
projection of any real-valued $r\in \q H$ on the negative part
of the spectrum of $\tilde L_d$, we introduce the following supplemental
of that direction defined by
\begin{equation*}
\tilde{\mathcal{H}}_{-}^d \equiv \{r \in \mathcal{H} \,|\, r\mbox { is
  real-valued}, \tilde \pi_0^d(r)=0\},
\end{equation*}
together with the following bilinear form
\begin{eqnarray}\label{134''}
 \tilde \varphi_{d} (q,r)&=&\int_{-1}^{1} (-\tilde \psi(d,y) q_1r_1+q_1' r_1'(1-y^2)+q_2 r_2 ) \rho dy,
\end{eqnarray}
where $\tilde \psi(d,y)$ is defined in \eqref{defpsi}. Then, we recall
from Proposition 3.7 page 5906 in Azaiez \cite{Atams15} that there
exists $C_0>0$ such that for all $|d|<1$, for all real-valued $r\in
\tilde{\mathcal{H}}^d_{-},$ 
\[
  \frac{1}{C_0} \|r\|_{\mathcal{H} }^2\le \tilde \varphi_d (r,r)\le C_0 \|r\|_\mathcal{H} ^2.
\]

\subsubsection{Decomposition and modulation of the solution}\label{sec2}

From Proposition \ref{propdecomp}, given that the solitons are
decoupled by \eqref{separation}, we see that for each $l=1,\dots,k$,
$e^{-i\theta_l}q$ is locally close to the soliton
$\kappa(d_l)$. Therefore, from the spectral information of Section
\ref{sec1}, this means that our problem has $3k$ nonnegative
directions, namely
\begin{equation}\label{directions}
  \{e^{i\theta_l}\check F_1(d_l,y),\; e^{i\theta_l}\check
  F_0(d_l,y),\; ie^{i\theta_l}\tilde F_0(d_l,y)\; |\; l=1,\dots,k\},
\end{equation}
corresponding to eigenvalues shown in the subscript of the eigenfunctions,
with projections given by 
\begin{equation}\label{components0}
  \{\check  \pi_1^{d_l}[\Re(e^{-i\theta_l}q)] \check F_1(d_l,y),\;
  \check\pi_0^{d_l}[\Re(e^{-i\theta_l}q)] \check F_0(d_l,y),\;
  \tilde \pi_0^{d_l}[\Im(e^{-i\theta_l}q)] \tilde F_0(d_l,y)\;
|\; l=1,\dots,k\}. 
\end{equation}
Since no other nonnegative direction seems to exist, as we asserted in
Section \ref{sec1}, introducing the operator
$\pi_-^{d_1,\theta_1,\dots,d_k,\theta_k}$ (or $\pi_-$ for short) such that
\begin{equation}\label{decompq0}
q(y,s) = \sum_{l=1}^k e^{i\theta_l}\left\{\sum_{\lambda=0}^1\check
  \pi_\lambda^{d_l}[\Re(e^{-i\theta_l}q)] \check F_\lambda(d_l,y) + i\tilde
  \pi_0^{d_l}[\Im(e^{-i\theta_l}q)] \tilde F_0(d_l,y) \right\}
+\pi_-(q)(y,s), 
\end{equation}
it is natural to expect that $\pi_-(q)$
corresponds to the ``negative'' component of the solution.

\medskip

In fact, we will slightly modify the parameters $d_i(s)$ and
$\theta_i(s)$ given in that proposition, in order to kill the
projections of the solutions on the null directions given in Section
\ref{sec1}. More precisely, this is our statement:
\begin{lemma}[A modulation technique]
  \label{lemode0}
  Assume that $k\ge 2$. 
There exist $E_0>0$ and $\epsilon_0>0$ such that for all  $E\ge E_0$ and $\epsilon\le \epsilon_0$, 
if $v\in \H$ and for all $i=1,...,k$, $(\hat d_i,\hat \theta_i)\in(-1,1)\times \bR$ are such that
\begin{equation*}
\hat \zeta_{i+1}-\hat \zeta_i\ge E\mbox{ and }\|\hat q\|_{\H}\le \epsilon
\end{equation*}
where $\hat q = v-\ds\sum_{j=1}^k e^{i\hat \theta_j}(\kappa(\hat d_j),0)$
and $\hat d_i = -\tanh \hat \zeta_i$, then, there exist $(d_j,\theta_j) _{j=1,\dots,k}$ such that for all $i=1,\dots,k$,
\begin{eqnarray*}
  -&& \check \pi_0^{\bb_i}[\Re(e^{-i\theta_i}q)]
      =\tilde \pi_0^{\bb_i}[\Im(e^{-i\theta_i}q)]=0 
\mbox{ where }q=v-\sum_{j=1}^k e^{i\theta_j}(\kappa(d_j),0),\\
-&&\left| \theta_i-\hat \theta_i\right|+|\zeta_i-\hat \zeta_i|\le C\|\hat q\|_{\H}\le C\epsilon,\\
-&&
\zeta_{i+1}-\zeta_i\ge \frac E2\mbox{ and }\|q\|_{\H} \le C\|\hat
    q\|_{\H}\le C\epsilon,
\end{eqnarray*}
where $d_j(s) = - \tanh\zeta_j(s)$. 
\end{lemma}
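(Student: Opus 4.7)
The plan is to apply a quantitative inverse/implicit function theorem to the map
\[
\Phi:(d_1,\theta_1,\dots,d_k,\theta_k)\mapsto \bigl(\check\pi_0^{d_i}[\Re(e^{-i\theta_i}q)],\ \tilde\pi_0^{d_i}[\Im(e^{-i\theta_i}q)]\bigr)_{i=1,\dots,k},
\]
where $q=v-\sum_j e^{i\theta_j}(\kappa(d_j),0)$. The unknown has $2k$ real components and we impose $2k$ scalar conditions, so the problem is square. I would set the reference point to be $(\hat d_i,\hat\theta_i)$ and show that for $\epsilon$ small and $E$ large, $\Phi$ is a local $C^1$-diffeomorphism near this reference with a uniform lower bound on the norm of its inverse; then solving $\Phi=0$ by a contraction / Newton iteration around $(\hat d,\hat\theta)$ produces the desired $(d,\theta)$, and the triangle inequality together with the estimates on the parameter shifts yields the remaining two bullet points ($\zeta_{i+1}-\zeta_i\ge E/2$ and $\|q\|_\H\le C\|\hat q\|_\H$).

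The heart of the argument is the computation of the Jacobian of $\Phi$ at $(\hat d,\hat\theta)$. Using $\partial_{d_j}q=-e^{i\theta_j}(\partial_d\kappa(d_j),0)$ and $\partial_{\theta_j}q=-ie^{i\theta_j}(\kappa(d_j),0)$, a direct differentiation gives, for the diagonal $i=j$ block,
\[
\partial_{d_i}[\check\pi_0^{d_i}\Re(e^{-i\theta_i}q)]=-\phi\bigl(\check W_0(d_i),(\partial_d\kappa(d_i),0)\bigr)+O(\|\hat q\|_\H),
\]
\[
\partial_{\theta_i}[\tilde\pi_0^{d_i}\Im(e^{-i\theta_i}q)]=-\phi\bigl(\tilde W_0(d_i),(\kappa(d_i),0)\bigr)+O(\|\hat q\|_\H),
\]
while the two cross-terms $\partial_{\theta_i}[\check\pi_0^{d_i}\Re(\cdot)]$ and $\partial_{d_i}[\tilde\pi_0^{d_i}\Im(\cdot)]$ are $O(\|\hat q\|_\H)$ because $\kappa$ is real (so differentiating $\Re(e^{-i\theta_i}q)$ in $\theta_i$ and $\Im(e^{-i\theta_i}q)$ in $d_i$ produces only the ``$\hat q$'' piece). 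The key algebraic identity is that $(\partial_d\kappa(d,\cdot),0)$ is a nonzero multiple of $\check F_0(d,\cdot)$, which follows from a one-line computation using the explicit expressions in \eqref{defk} and \eqref{110}; combined with $\tilde F_0(d,\cdot)=(\kappa(d),0)$ from \eqref{F} and the biorthogonality relations \eqref{majoration'} and \eqref{1}, this forces the diagonal $2\times 2$ block at index $i$ to be
\[
\begin{pmatrix}c(d_i)&0\\0&-1\end{pmatrix}+O(\epsilon),\qquad c(d_i)=\frac{2\kappa_0}{(p-1)(1-d_i^2)}\neq 0,
\]
hence uniformly invertible as long as the $d_i$ stay in a fixed compact subset of $(-1,1)$.

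For the off-diagonal blocks ($i\neq j$), the relevant quantity is $\phi(\check W_0(d_i),e^{i(\theta_j-\theta_i)}(\partial_d\kappa(d_j),0))$ and its analogue with $\tilde W_0$; both are integrals of the product of a function localised near $y=-d_i$ against one localised near $y=-d_j$. Using the explicit algebraic decay of $\kappa$ and its $d$-derivative and the elementary inequality
\[
\frac{(1-d_i^2)^{1/(p-1)}(1-d_j^2)^{1/(p-1)}}{\text{(suitable power of $1\pm d_id_j$)}}\le C\,e^{-\frac{2}{p-1}|\zeta_j-\zeta_i|}
\]
(which is standard for these Lorentz-boosted profiles and is the same bound used throughout \cite{MZdmj12, CZcpam13}), the off-diagonal entries are $O(e^{-\alpha E})$ for some $\alpha=\alpha(p)>0$. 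Consequently the full $2k\times 2k$ Jacobian is $\mathrm{diag}(c(d_i),-1)_{i=1}^k+O(\epsilon)+O(e^{-\alpha E})$, uniformly invertible once $E_0$ is large and $\epsilon_0$ small.

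With the Jacobian under control, a standard contraction-mapping form of the inverse function theorem produces a unique $(d,\theta)$ near $(\hat d,\hat\theta)$ with $\Phi(d,\theta)=0$ and quantitative bound $|d_i-\hat d_i|+|\theta_i-\hat\theta_i|\le C\|\hat q\|_\H$; since $\zeta_i=-\mathrm{arctanh}\,d_i$ is locally Lipschitz on the compact range of $d_i$, this yields $|\zeta_i-\hat\zeta_i|\le C\|\hat q\|_\H\le C\epsilon$, whence $\zeta_{i+1}-\zeta_i\ge E-2C\epsilon\ge E/2$ if $\epsilon_0$ is small relative to $E_0$. The inequality $\|q\|_\H\le C\|\hat q\|_\H$ then follows from $q=\hat q+\sum_j[e^{i\hat\theta_j}(\kappa(\hat d_j),0)-e^{i\theta_j}(\kappa(d_j),0)]$ and \eqref{majoration}, \eqref{majoration1}. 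The main technical obstacle in this scheme is getting the off-diagonal Jacobian bound uniform in $E$ and in the admissible range of the $d_i$; everything else is bookkeeping around a classical modulation argument.
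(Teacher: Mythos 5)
Your proof is the standard modulation argument via the implicit function theorem, which is exactly what the paper intends: the authors explicitly decline to give details and instead refer to the real-valued multi-soliton case (Proposition~3.1 of \cite{MZdmj12}) and the complex single-soliton case (Proposition~4.1 of \cite{Atams15}), and the remark following the lemma confirms the proof goes through the implicit function theorem. Your setup, the identification $(\partial_d\kappa(d,\cdot),0)=-\tfrac{2\kappa_0}{(p-1)(1-d^2)}\check F_0(d,\cdot)$, the use of $\tilde F_0=(\kappa,0)$, the biorthogonality relations \eqref{majoration'}, \eqref{1}, and the structure of the $2\times 2$ diagonal blocks with vanishing cross terms because $\kappa$ is real-valued, are all correct and are the heart of the matter.

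There is, however, one point you should not dismiss as bookkeeping. You assert that the Jacobian is ``uniformly invertible as long as the $d_i$ stay in a fixed compact subset of $(-1,1)$'', and later that ``$\zeta_i=-\mathrm{arctanh}\,d_i$ is locally Lipschitz on the compact range of $d_i$''. Neither premise is available here: the hypothesis $\hat\zeta_{i+1}-\hat\zeta_i\ge E$ only bounds the gaps from below and places no upper bound on $|\hat\zeta_i|$, so the $\hat d_i=-\tanh\hat\zeta_i$ are allowed to approach $\pm1$. In the $d$-parametrization, $c(d_i)=\tfrac{2\kappa_0}{(p-1)(1-d_i^2)}$ blows up while the off-diagonal entries $\partial_{d_j}\Phi_i$ inherit a factor $\tfrac{1}{1-d_j^2}$, so the Jacobian is not of the form ``fixed diagonal $+$ uniformly small''; it is rather ``diagonal $\times(\mathrm{Id}+$ small$)$'', and the quantitative IFT constants depend on $d_i$ unless one is careful. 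The clean fix — and the one implicit in \cite{MZdmj12} and in the statement itself (which is phrased in $\zeta$) — is to use $(\zeta_i,\theta_i)$ as the unknowns from the start. Since $\partial_{\zeta_i}d_i=-(1-d_i^2)$, the diagonal entry becomes $-\tfrac{2\kappa_0}{p-1}+O(\epsilon)$ with no degeneracy, the off-diagonal terms are $O(E\,e^{-\frac{2}{p-1}E})$ uniformly (these are exactly the integrals bounded in \eqref{adam}, \eqref{giada}), and the implicit function theorem then yields $|\zeta_i-\hat\zeta_i|+|\theta_i-\hat\theta_i|\le C\|\hat q\|_\H$ with a constant genuinely independent of where the solitons sit in $(-1,1)$. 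With that reparametrization, the rest of your argument (the contraction, the inequality $\zeta_{i+1}-\zeta_i\ge E-2C\epsilon\ge E/2$, and $\|q\|_\H\le C\|\hat q\|_\H$ via \eqref{contkd}-type continuity) closes correctly.
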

\begin{rk} 
The proof is in fact  a mixture between the real-valued multi-soliton case given in
Proposition 3.1 page 2847 in \cite{MZdmj12}, and the complex-valued
single-soliton case given in Proposition 4.1 page 5912 in
\cite{Atams15}. In other words, no new idea is needed, and the proof
is only technical. In order to keep the paper in a reasonable length,
we don't give the proof here, and kindly refer the interested reader
to those papers.
\end{rk}
\begin{rk}
  As the proof follows from the application of the implicit function
theorem, this implies in particular that the new parameters
$(d_j,\theta_j) _{j=1,\dots,k}$ are in fact of class $C^1$ in terms of
$v$ and the parameters $(\hat d_j,\hat \theta_j)_{j=1,\dots,k}$.
\end{rk}

From the decomposition of the solution given in Proposition
\ref{propdecomp}, we can apply the modulation technique of Lemma
\ref{lemode0} to derive the existence of a new set of parameters
$(d_l(s),\theta_l(s)) _{l=1,\dots,k}$, (denoted the same for
simplicity), $C^1$ as functions of $s$, so that 
\begin{align}
  - &
  \forall s\ge s_0,\;\;\forall l=1,\dots,k,\;\;
      \check \pi_0^{\bb_l(s)}[\Re(e^{-i\theta_l(s)}q(s))]
      =\tilde \pi_0^{\bb_l(s)}[\Im(e^{-i\theta_l(s)}q(s))]=0, \label{orth}\\
  - & \|q(s)\|_{\H}\to 0\mbox{ and }
       \forall l =1,\dots,k-1,\;\; \zeta_{l+1}(s)-\zeta_l(s) \to
       \infty \mbox{ as }s\to \infty\nonumber
       \end{align}
       where
\begin{equation}\label{defzetaj}
  d_j(s) = - \tanh\zeta_j(s), \mbox{ hence }
  \zeta_j'(s) = - \frac{d_j'(s)}{1-d_j(s)^2}
\end{equation}
  for some $s_0\in \bR$,
with
\begin{equation}\label{defq}
q(y,s) =\vc{q_1(y,s)}{q_2(y,s)}= \vc{w(y,s)}{\partial_s w(y,s)}-\vc{\sum_{j=1}^k e^{i\theta_j(s)}\kappa(d_j(s),y)}{0}.
\end{equation}
Introducing for each $l=1,\dots,k$ and $\lambda=0$ or $1$
\begin{equation}\label{defa1l}
\check \alpha_\lambda^l(s) = \check
\pi_\lambda^{\bb_l(s)}[\Re(e^{-i\theta_l(s)}q(s))],\;
\tilde \alpha_0^l(s) = \tilde \pi_0^{d_l(s)}[\Im(e^{-i\theta_l(s)}q(s))],
\end{equation}
we see from the modulation product given in \eqref{orth} that
\begin{equation}\label{zero}
  \check \alpha_0^l(s) \equiv  \tilde \alpha_0^l(s)  \equiv 0,
  \mbox{ hence }
  \left(\check \alpha_0^l\right)'(s) \equiv
  \left(\tilde \alpha_0^l\right)'(s)  \equiv 0.
\end{equation}
Introducing then
\begin{equation}\label{defq-}
  q_-=\pi_-(q)
\end{equation}
  where the projector
$\pi_-=\pi_-^{d_1,\theta_1,\dots,d_k,\theta_k}$ was introduced in
\eqref{decompq0}, we see from the orthogonality relations in
\eqref{orth} that 
\begin{equation}\label{estq-}
  q(y,s) = \sum_{l=1}^k e^{i\theta_l} \check \alpha_1^l(s) \check F_1(d_l,y) +
  q_-(y,s).
\end{equation}
In particular, 
the control of $q(y,s)$ is equivalent to the control of
\begin{equation}\label{components}
(\check \alpha_1^l(s), d_l(s),\theta_l(s))_{l=1,\dots,k} \mbox{ together with }
q_-(y,s).
\end{equation}
In fact, in order to control $\|q_-(s)\|_{\q H}$, we will
in fact control
\begin{equation}\label{defa-}
A_- = \varphi(q_-, q_-),
\end{equation}
where $\varphi$ is the bilinear
form defined for all $(r, \boldsymbol r)\in \q H^2$ by
\begin{align}
  \varphi(r, \bd r) =
  & \Re \int_{-1}^1
\left(r_2\bd {\bar r}_2+r_1'\bd
                     {\bar r}_1'(1-y^2)+\left(\frac{2(p+1)}{(p-1)^2}-|K|^{p-1}\right)r_1\bd
    {\bar r}_1  \right)\rho dy\nonumber\\
  &-(p-1)\iint |K|^{p-3}\Re(\bar K r_1)\Re(K \bd {\bar r}_1) \rho dy \label{defvarphi}
\end{align}
with
\begin{equation}\label{defK}
K(y,s)=\sum_{j=1}^{k}e^{i\theta_j(s)}\kappa(d_j(s), y).
\end{equation}
As a matter of fact, we have the following result:
\begin{lemma}[Results related to the bilinear form
  $\varphi$]\label{lemequiv} For $s$ large enough, the following holds:\\
  (i) (\textit{Continuity of $\varphi$}) For all $r$ and $\bd r$ in
  $\q H$, we have
  \[
|\varphi(r, \bd r)|\le C\|r\|_{\q H}\|\bd r\|_{\q H}.
  \]
(ii) (\textit{Equivalence of norms})
  \[
    \frac 1 C \|q_-(s)\|_{\H}^2 - C\bar J(s)^2\|q(s)\|_{\H}^2
\le A_-(s)
    \le C\|q_-(s)\|_{\H}^2,
\]
where $A_-(s)$ is introduced in \eqref{defa-} and 
\begin{equation}\label{defjb}
\bar J(s)= \displaystyle \sum_{j=1}^{k-1} (\zeta_{j+1}(s)-\zeta_j(s))e^{-\frac
  2{p-1}(\zeta_{j+1}(s)-\zeta_j(s))},
\end{equation}
with the convention that
\begin{equation}\label{conv}
\zeta_0(s) \equiv -\infty\mbox{ and }\zeta_{k+1}(s) \equiv +\infty.
  \end{equation}
 \end{lemma}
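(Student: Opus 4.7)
My plan is to prove (i) and (ii) separately: (i) by direct estimation of each term in the definition \eqref{defvarphi} of $\varphi$, and (ii) by combining (i) (for the upper bound) with a partition-of-unity localization (for the lower bound) that reduces the analysis to the single-soliton coercivity estimates \eqref{36,5} and their $\tilde\varphi_d$ analog recalled in Section \ref{sec1}.

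For (i), the three terms in \eqref{defvarphi} not involving $K$ are trivially bounded by $\|r\|_{\H}\|\bd r\|_{\H}$. The two remaining terms satisfy the pointwise bound $|K|^{p-3}|\Re(\bar K r_1)||\Re(K\bar{\bd r}_1)|\le|K|^{p-1}|r_1||\bd r_1|$, so it suffices to control $\int|K|^{p-1}|r_1||\bd r_1|\rho\,dy$. From \eqref{defk} a direct computation yields $\kappa(d_j,y)^{p-1}(1-y^2)\le C$ uniformly in $d_j\in(-1,1)$, hence $|K|^{p-1}\le C_k/(1-y^2)$, and the Hardy--Sobolev identity of \cite{MZajm03} then gives the required bound. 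Applied at $r=\bd r=q_-$, this also yields the upper bound $A_-\le C\|q_-\|_{\H}^2$ in (ii).

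For the lower bound I would introduce a partition of unity $(\chi_l)_{l=1,\dots,k}$ on $(-1,1)$ with $\chi_l\equiv 1$ near $-d_l(s)$ and transitions located in the gaps between successive solitons, so that on $\supp\chi_l$ the solitons $\kappa(d_m,\cdot)$ with $m\ne l$ are of size $O(e^{-\frac{2}{p-1}|\zeta_m-\zeta_l|})$ in the relevant norms. Using the pointwise identities $\Re(\bar K r_1)\Re(K\bar{\bd r}_1)=\kappa(d_l)^2\Re(e^{-i\theta_l}r_1)\Re(e^{-i\theta_l}\bd r_1)$ and $\Re(r_1\bar{\bd r}_1)=\Re(e^{-i\theta_l}r_1)\Re(e^{-i\theta_l}\bd r_1)+\Im(e^{-i\theta_l}r_1)\Im(e^{-i\theta_l}\bd r_1)$ valid when $K=e^{i\theta_l}\kappa(d_l)$, a direct computation shows that on $\supp\chi_l$ the form $\varphi$ coincides, up to an error of size $O(\bar J)$ from replacing $K$ by the single soliton $e^{i\theta_l}\kappa(d_l)$, with $\check\varphi_{d_l}\bigl(\Re(e^{-i\theta_l}r),\Re(e^{-i\theta_l}\bd r)\bigr)+\tilde\varphi_{d_l}\bigl(\Im(e^{-i\theta_l}r),\Im(e^{-i\theta_l}\bd r)\bigr)$. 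Applied to the localized pieces $\chi_l q_-$, and using that the orthogonality conditions on $e^{-i\theta_l}q_-$ inherited from \eqref{orth} and the definition of $q_-$ transfer to $e^{-i\theta_l}\chi_l q_-$ up to errors $O(\bar J\|q_-\|_{\H})$ (since $\check W_\lambda(d_l,\cdot)$ and $\tilde W_0(d_l,\cdot)$ decay exponentially outside $\supp\chi_l$), the single-soliton coercivities and a summation over $l$ yield $\|q_-\|_{\H}^2\le C A_-+C\bar J^2\|q\|_{\H}^2$, which is the required lower bound.

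The main obstacle is the careful accounting of all these cross-interaction errors: every replacement step---$K$ by $e^{i\theta_l}\kappa(d_l)$ on $\supp\chi_l$, $q_-$ by $\chi_l q_-$ in the quadratic form, and the orthogonality transfer---must contribute an error of order at most $\bar J$, which is exactly the scale defined in \eqref{defjb}. The bookkeeping parallels the real-valued multisoliton analysis of \cite{MZdmj12} and \cite{CZcpam13}; the novelty here is tracking the complex phases $\theta_l$, which decouple cleanly from $\varphi$ upon the derotation $r\mapsto e^{-i\theta_l}r$ in each soliton region, thanks to the algebraic identities displayed above.
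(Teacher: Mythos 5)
Your proof is correct and follows essentially the same route as the paper's: part (i) rests on the pointwise bound $|K|^{p-1}\lesssim(1-y^2)^{-1}$ combined with the Hardy--Sobolev embedding (you derive the pointwise bound from the identity $(1+dy)^2-(1-d^2)(1-y^2)=(d+y)^2\ge 0$, whereas the paper infers it from the $L^\infty$ embedding applied to $\kappa(d,\cdot)$ --- a minor variant), and part (ii) rests on localizing near each soliton and, after derotation by $e^{-i\theta_l}$, reducing $\varphi$ to the single-soliton coercive forms $\check\varphi_{d_l}$ and $\tilde\varphi_{d_l}$. This is exactly the strategy stated in the paper's Appendix~\ref{appequiv}, which then defers the bookkeeping to Lemma~3.10 of \cite{MZajm12}; your sketch of the derotation identities and the orthogonality-transfer errors is consistent with that reference.
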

 \begin{proof}
   The proof is similar to the real-valued case. For that case, we
   leave it to Section \ref{appequiv} in the appendix.
 \end{proof}
 \begin{rk}
Note that the bilinear form $\varphi(r,\bd r)$ is the second differential of the
Lyapunov functional defined in \eqref{defE} near the multisoliton
$K(y,s)$. This property is crucial in deriving the differential
inequality on $A_-(s)$ \eqref{defa-} stated below in item (ii) of
Proposition \ref{propdyn}. 
Note also that our definition of the bilinear form in
\eqref{defvarphi} is an important novelty in our paper, because of the
complex structure. Indeed, the preceding work in the complex setting
was the one-soliton case in Azaiez \cite{Atams15} and there, we had
two different real-valued bilinear forms for the real and the
imaginary parts of the solution, after phase shift which takes us
near the real-valued case. Here, with the multisoliton $K(y,s)$
\eqref{defK}, no global phase shift is good, unless all the phases are
the same (up to some multiple of $\pi$), and there is no prior reason to be
in that case (in fact, that will be the case at the end of the proof,
as our statement in Theorem \ref{th1} shows).  In one word, in this
complex-valued multi-soliton setting, we cannot reduce the problem in
such a way so
we fall near the real-valued case, and we need to introduce a
\textit{new} bilinear form in \eqref{defvarphi},
creating this way an original situation in this paper.  
 \end{rk}
 
 \textit{Conclusion of Section \ref{sec2}: From \eqref{components} together
 with Lemma \ref{lemequiv}, we see that the control of $\|q(s)\|_{\q
   H}$ reduces to the control of $(\check \alpha_1^l(s),
 d_l(s),\theta_l(s))_{l=1,\dots,k}$ together with $A_-(s)$}.

\subsubsection{Dynamics of the solution near the multi-soliton}\label{sec3}
From equation \eqref{eqw}, we can write the following equation
satisfied by $q$ \eqref{defq} for all $y\in(-1,1)$ and $s\ge s_0$:
\begin{equation}\label{eqq}
  \partial_s q = Lq
+\begin{pmatrix}0\\ f(q_1)\end{pmatrix}+\begin{pmatrix}0\\ R(y,s)\end{pmatrix}
    -\sum_{j=1}^k\begin{pmatrix}
e^{i\theta_j}(i\theta_j'\kappa(d_j,y)+d_j'\partial_d
  \kappa(d_j))\\0 \end{pmatrix},  
\end{equation}
where
\begin{align}
Lq=
  &\begin{pmatrix}q_2\\\mathcal{L}q_1
+\psi(q_1)-\frac{p+3}{p-1} q_2- 2 y \partial_y q_2 \end{pmatrix}, \label{defL}\\
            \psi(\xi) = & |K|^{p-1}\xi+(p-1)|K|^{p-3}K\Re(\bar K\xi)
                          -\frac{2(p+1)}{(p-1)^2}\xi,\nonumber\\
          f(\xi)=&|K+\xi|^{p-1}(K+\xi)-|K|^{p-1}K-|K|^{p-1}\xi-(p-1)|K|^{p-3}K\Re(\bar
                   K\xi), \nonumber\\
R(y,s)=&|K|^{p-1}K-\sum_{j=1}^{k}e^{i\theta_j}\kappa^{p}(d_j, y),\nonumber
\end{align}
and $K(y,s)$ is defined in \eqref{defK}.
Let us introduce
\begin{equation}\label{defJ}
  J(s) = \sum_{l=1}^{k-1} e^{-\frac 2{p-1}(\zeta_{l+1}(s)-\zeta_l(s))}\mbox{ and }
  \check J(s) =\sum_{l=1}^{k-1} h(\zeta_{l+1}(s)-\zeta_l(s))
\end{equation}
where
\begin{equation*}
h(\zeta)= e^{-\frac p{p-1} \zeta} \mbox{ if }p<2,\;\;h(\zeta)= e^{-2 \zeta} \sqrt \zeta\mbox{ if }p=2 \mbox{ and }h(\zeta)= e^{-\frac 2{p-1}\zeta}\mbox{ if }p>2.
\end{equation*}
Projecting equation \eqref{eqq} along the directions given in
\eqref{directions}, then, using energy techniques, we may write the following
differential inequalities satisfied by the components of the solution
shown in \eqref{components}:
\begin{proposition}[Differential equations satisfied by the different
  components] \label{propdyn}
Assume that $k\ge 2$.
 For all $s$ large enough, the following holds:
\item{(i)}(Control of the positive modes and the modulation
  parameters) For all $i=1,\dots,k$, it holds that
\begin{eqnarray*}
  |\left(\check\alpha_1^i\right)'-\check\alpha_1^{i}|+ |\zeta_i'|+|\theta_i'|\le C J+C \|q\|^2_{\mathcal{ H}}.
\end{eqnarray*}
\item{(ii)} (A differential inequality satisfied by $A_-(s)$)
  \begin{eqnarray}
\left(\frac 12 A_--R_-\right)'&\le& - \frac 3{p-1}\iint |q_{-,2}|^2 \frac \rho{1-y^2} dy
                                    +o\left(\|q(s)\|_{\H}^2\right)
                                    +C\check J(s)^2\nonumber\\
                                   & &
                                          + \; C\; J(s)\sqrt{|A_-(s)|}\label{eqam}
\end{eqnarray}
for some $R_-(s)$ satisfying 
\begin{equation}\label{boundr-}
|R_-(s)|\le C \|q(s)\|_{\H}^{\bar p+1}
\end{equation}
where $\bar p = \min(p,2)$.
\item{(iii)} (An additional relation)
  \[
    \frac d{ds} \Re \iint q_1 \bar q_2 \rho dy
    \le - \frac 45 A_- +C\iint|q_{-,2}|^2 \frac\rho{1-y^2} dy
    +C\sum_{l=1}^k|\check \alpha_1^l|^2 +C \check J^2.    
  \]
\end{proposition}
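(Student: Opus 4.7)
The plan is to derive the three estimates by combining equation \eqref{eqq} for $q$, the spectral properties of Section \ref{sec1}, and the orthogonality conditions \eqref{orth} coming from modulation. Throughout, I will repeatedly use the basic size estimates $\|R(\cdot,s)\|_{\mathcal{H}} \le CJ(s)$ for the soliton interaction residual (following from the separation \eqref{separation}), and $\|f(q_1)\|_{L^2_\rho} \le C\|q\|_{\mathcal{H}}^{\bar p}$ for the superquadratic remainder, where $\bar p = \min(p,2)$.

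For part (i), I will differentiate the modulation conditions \eqref{orth} in $s$, using $\partial_s q$ from \eqref{eqq} and the $s$-dependence of $\check W_0(d_l(s))$, $\tilde W_0(d_l(s))$, and of the phase factor $e^{-i\theta_l(s)}$. This produces a linear system of $2k$ equations for $(\zeta_i',\theta_i')_{i=1,\dots,k}$, whose coefficient matrix is diagonally dominant for $s$ large thanks to \eqref{majoration'} and \eqref{1} (the non-diagonal blocks vanish at leading order because distinct solitons are spatially separated and their scalar products are $o(1)$). The right-hand side decomposes into: the linear contribution from $Lq$, which lies in the kernel of $\check\pi_0^{d_l}$ and $\tilde\pi_0^{d_l}$ by \eqref{pld1} and \eqref{pld2}; the quadratic remainder $f(q_1)$, giving an $O(\|q\|_{\mathcal{H}}^2)$ contribution; and the residual $R$, giving an $O(J)$ contribution. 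Inverting the system yields the bound on $|\zeta_i'|+|\theta_i'|$. For $\check\alpha_1^i$, projecting \eqref{eqq} on $e^{i\theta_i}\check F_1(d_i)$ and using \eqref{pld1} with $\lambda=1$ yields $(\check\alpha_1^i)' - \check\alpha_1^i = $ error terms of the same size.

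For part (ii), the crucial fact is that $\varphi$ is precisely the Hessian of the Lyapunov functional $E$ at $K$. Writing $q_- = \pi_-(q)$, applying $\pi_-$ to \eqref{eqq}, and then computing $\frac{d}{ds}\varphi(q_-,q_-)$ by the product rule (taking into account that $\varphi$ itself depends on $s$ through $K$), the Lyapunov dissipation identity produces the good term $-\frac{4}{p-1}\iint|q_{-,2}|^2\rho/(1-y^2)\, dy$. The correction $R_-$ is introduced to absorb non-integrable-in-time boundary contributions coming from differentiating the nonlinearity, and satisfies \eqref{boundr-} since $f$ is superquadratic. The gap $\frac{4}{p-1} - \frac{3}{p-1}$ absorbs: the $O(|\zeta_i'|+|\theta_i'|)\|q\|_{\mathcal{H}}^2$ error from the time derivative of the projector $\pi_-$ (controlled by part (i)); the $o(\|q\|_{\mathcal{H}}^2)$ error from $q \to 0$ in $\mathcal{H}$; and part of the interaction terms. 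The remaining interactions with $R$ split into $C\check J^2$ (the part of $R$ not contributing linearly to $\varphi$) and $CJ\sqrt{|A_-|}$ (the Cauchy--Schwarz estimate of $\varphi(\cdot, q_-)$ against $R$ using the continuity of $\varphi$ from Lemma \ref{lemequiv}(i)).

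For part (iii), I will differentiate $\Re\iint q_1\bar q_2 \rho\, dy$ using $\partial_s q_1 = q_2$ and $\partial_s q_2$ from \eqref{eqq}. After integration by parts in the $\mathcal{L}q_1$ term, the dominant contribution is $\iint |q_2|^2\rho\, dy - \varphi(q,q)$, plus modulation terms controlled by part (i) and residual terms of size $\check J^2$. Decomposing $q$ via \eqref{estq-} and using Lemma \ref{lemequiv}(ii) together with $\varphi(\check F_1,\check F_1)=O(1)$ and the orthogonality of $q_-$ to the $\check F_1(d_l)$'s (up to interaction remainders of size $\check J$), we obtain $-\varphi(q,q) \le -A_- + C\sum_l|\check\alpha_1^l|^2 + C\check J^2$; the contribution from $\iint|q_2|^2\rho\, dy$ is bounded by $\iint |q_{-,2}|^2\rho/(1-y^2)\, dy + C\sum_l|\check\alpha_1^l|^2$, and the factor $\frac{4}{5}<1$ leaves room for all the error terms. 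The main obstacle throughout is the bookkeeping of the three interaction quantities $J$, $\check J$, $\bar J$ at their correct precision, together with the fact that in the present complex-valued multi-soliton setting one cannot reduce to the real-valued framework of \cite{MZjfa07} by a single global phase rotation (as was done for one soliton in \cite{Atams15}), since the phases $\theta_j$ may differ; the new bilinear form $\varphi$ in \eqref{defvarphi} is designed precisely to bypass this difficulty.
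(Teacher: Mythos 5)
Your overall architecture matches the paper's: for (i) you project the evolution equation onto the modulation directions (equivalently differentiate the orthogonality conditions), for (ii) you exploit that $\varphi$ is the Hessian of $E$ at $K$ and push the Lyapunov structure onto $q_-$, and for (iii) you run the standard virial-type identity. However, two specifically \emph{complex-valued} difficulties are glossed over in a way that would leave genuine gaps.

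First, for part (i), you assert that the nonlinear term contributes $O(\|q\|_{\H}^2)$ and quote the bound $\|f(q_1)\|_{L^2_\rho}\le C\|q\|_{\H}^{\bar p}$ in passing. For $p<2$ this bound (even if correct) only gives $\|q\|_{\H}^{\bar p}=\|q\|_{\H}^{p}$, which is \emph{larger} than $\|q\|_{\H}^2$; what is actually needed is a bound of the form $\iint\kappa(d_l)|f(q_1)|\rho\,dy\le C\delta_{\{p\ge 2\}}\|q\|_{\H}^p+CJ_l(s)\|q\|_{\H}^2$ with $J_l(s)=\iint\kappa(d_l)|K|^{p-2}\,dy$, so one must prove $J_l(s)\le C$. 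In the real-valued case this is straightforward, but in the complex case it is genuinely delicate: when $p<2$ the factor $|K|^{p-2}$ blows up at zeros of $K$, and $K$ can have zeros precisely when consecutive phases $\theta_j,\theta_{j+1}$ nearly cancel ($\cos(\theta_{j+1}-\theta_j)$ close to $-1$). The paper isolates this in Lemma \ref{lemjis}, which requires a nontrivial case distinction according to the size of $\cos(\theta_{j+1}-\theta_j)$, and near the cancellation an explicit localization of the zero $\check\xi_j(s)$ of $\Re(e^{-i\theta_j}\kz)$ with a quantitative lower bound on the derivative. Your proposal contains no analogue of this argument, and without it the $O(\|q\|_{\H}^2)$ contribution is unjustified.

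Second, for part (ii) you describe $R_-$ as ``introduced to absorb non-integrable-in-time boundary contributions \dots and satisfies \eqref{boundr-} since $f$ is superquadratic.'' Superquadraticity controls the \emph{size} of $R_-$ but does not explain why a scalar correction of the form $(\tfrac12 A_--R_-)'$ should exist at all. The mechanism used in the paper is that $f(\xi)=\nabla_\xi\cF(K,\xi)$ for an explicit potential $\cF(K,\xi)$ (this comes from the original nonlinearity $|u|^{p-1}u$ being the gradient of $|u|^{p+1}/(p+1)$ in $\R^2$), so that $\Re\iint\partial_s\bar q_1\,f(q_1)\rho\,dy$ is a total $s$-derivative up to terms controlled by $\partial_s K$, and one sets $R_-=\iint\cF(K,q_1)\rho\,dy$. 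This is the key structural point (and is highlighted in a remark immediately after the statement); without stating it explicitly your treatment of the nonlinear contribution to $A_-'$ does not close.

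Part (iii) is omitted in the paper; your sketch is plausible and consistent with the real-valued reference.
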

\begin{rk} The differential inequality satisfied by $A_-(s)$ given in
  item (ii) crucially needs the fact that the nonlinear term in the
  original equation \eqref{equ} is a gradient when $\m C$ is
  identified with $\m R^2$. See the proof for details.
\end{rk}
\begin{proof}[Proof of Proposition \ref{propdyn}]
The argument is similar to the real case studied in Merle and Zaag
\cite{MZajm12}. For that reason, we postpone the proof to Section
\ref{appdyn}.
  \end{proof}
Using the previous proposition, we prove the following:
\begin{proposition}[Size of $q$ in terms of the distance between
  solitons]\label{propsize}
Assume that $k\ge 2$.
  For all $s\ge s_0$, we have
\begin{equation}
  \|q(s)\|_{\mathcal{H}}\le C\check J(s),
\end{equation}
where $\check J$ is introduced in \eqref{defJ}.
\end{proposition}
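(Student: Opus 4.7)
The plan is to exploit the splitting in \eqref{estq-}, $q(y,s) = \sum_{l=1}^k e^{i\theta_l}\check\alpha_1^l(s)\check F_1(d_l,y) + q_-(y,s)$, together with Lemma \ref{lemequiv}(ii), which give $\|q(s)\|_{\mathcal{H}}^2 \sim A_-(s) + \sum_l |\check\alpha_1^l(s)|^2$ up to absorbable $\bar J^2\|q\|^2$ corrections (since $\bar J \to 0$ by \eqref{separation}). It therefore suffices to bound each of $A_-(s)$ and $|\check\alpha_1^l(s)|^2$ by $C\check J(s)^2$. The strategy is to control the positive-eigenvalue modes $\check\alpha_1^l$ by an ``integrate-from-$+\infty$'' device using the a priori smallness $\|q(s)\|_{\mathcal{H}}\to 0$ from Proposition \ref{propdecomp}, and to control $A_-$ by a Lyapunov-type functional built from (ii) and (iii).

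For the unstable modes, item (i) of Proposition \ref{propdyn} reads as the scalar ODE $(\check\alpha_1^l)'(s) = \check\alpha_1^l(s) + O\bigl(J(s) + \|q(s)\|_{\mathcal{H}}^2\bigr)$ with eigenvalue $+1$. Since a priori $\check\alpha_1^l(s)\to 0$, variation of constants backward from $+\infty$ (the unique decaying choice) yields
\[
  |\check\alpha_1^l(s)| \le C \sup_{s'\ge s}\bigl(J(s') + \|q(s')\|_{\mathcal{H}}^2\bigr).
\]
For the negative direction, set $G(s) = \tfrac{1}{2}A_-(s) - R_-(s) + \mu\,\Re \int_{-1}^1 q_1\bar q_2\,\rho\,dy$ with $\mu>0$ small. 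Adding (ii) to $\mu\times$(iii), the $\int |q_{-,2}|^2\rho/(1-y^2)\,dy$ contributions combine with a strictly dissipative sign, and a Young inequality of the form $CJ\sqrt{|A_-|} \le \epsilon A_- + \epsilon^{-1}J^2$ absorbs the mixed term. The crucial observation is that $J^2 \le C\check J^2$ in each of the three regimes $p<2$, $p=2$, $p>2$ (directly from the three-case definition of $h$ in \eqref{defJ}), so
\[
  G'(s) \le -c\,A_-(s) + C\sum_l |\check\alpha_1^l(s)|^2 + C\check J(s)^2 + o(\|q(s)\|_{\mathcal{H}}^2).
\]
Since $R_- = o(\|q\|^2)$ by \eqref{boundr-}, we have $G \sim \tfrac{1}{2}A_-$ for $\|q\|$ small, and $G(s)\to 0$, so a linear-Gronwall argument yields the pointwise bound $A_-(s) \le C\check J(s)^2 + C\sup_{s'\ge s}|\check\alpha_1^l(s')|^2 + o(\|q\|_{\mathcal{H}}^2)$.

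To close, set $N(s) = \sup_{s'\ge s}\|q(s')\|_{\mathcal{H}}$. Combining the two steps with the essentially-decreasing nature of $\check J$ enforced by the separation dynamics, so that $\sup_{s'\ge s}\check J(s') \le C\check J(s)$ for $s$ large, one arrives at the bootstrap inequality
\[
  N(s)^2 \le C\check J(s)^2 + C N(s)^4.
\]
Since $N(s)\to 0$ by Proposition \ref{propdecomp}, for $s$ beyond some $s_1$ the quartic term is absorbed into the left side, giving $N(s) \le C\check J(s)$. For $s\in[s_0,s_1]$ the bound follows trivially by enlarging the constant.

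The principal obstacle is precisely the unstable mode $\check\alpha_1^l$ sitting on the positive-eigenvalue direction of the linearized operator: no forward-in-time Gronwall bound is available, and the integrate-from-$+\infty$ device couples the present value of $|\check\alpha_1^l|$ to the \emph{future} of $\|q\|$, which is precisely what forces the argument to be a bootstrap. The quadratic (not linear) nature of the error term $O(\|q\|^2)$ in (i), guaranteed by the modulation-induced vanishing \eqref{zero} of the zero modes, is what permits closing. A subtler, quantitatively crucial point is the $p$-dependent calibration of $\check J$: the Young residue in the $A_-$ estimate is $CJ^2$, and the three-case definition of $h$ in \eqref{defJ} is designed precisely so that $J^2 \le C\check J^2$ uniformly in $p$; otherwise the final right-hand side would be polluted by $J$ terms and fail to match the sharp $\check J$ bound needed for the downstream Toda analysis in Section \ref{SecToda}.
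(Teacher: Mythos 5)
Your proposal captures the same strategy the paper invokes (the paper's proof is essentially a reference to Proposition~3.8 of Merle--Zaag \cite{MZajm12}): decompose $\|q\|^2_{\mathcal H}$ via \eqref{estq-} and Lemma~\ref{lemequiv}(ii) into $A_-$ plus the unstable amplitudes, control each $\check\alpha_1^l$ by the backward variation-of-constants formula $\check\alpha_1^l(s)=-\int_s^\infty e^{s-\sigma}h(\sigma)\,d\sigma$ forced by the a~priori decay, control $A_-$ by the Lyapunov functional $\tfrac12 A_- - R_- + \mu\,\Re\int q_1\bar q_2\rho\,dy$ built from items (ii) and (iii) of Proposition~\ref{propdyn} with a Young inequality on $J\sqrt{|A_-|}$, and close by a bootstrap in $N(s)=\sup_{s'\ge s}\|q(s')\|_{\mathcal H}$. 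The observation that $J\le C\check J$ uniformly in $p$ (whence $J^2\le C\check J^2$), which is precisely what the three-case definition of $h$ arranges, is correctly identified as the point that makes the Young residue match the target rate. These are exactly the ingredients in \cite{MZajm12}.

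One place where you are a bit quick is the step ``$\sup_{s'\ge s}\check J(s')\le C\check J(s)$.'' This is not automatic from $\check J\to 0$: at this stage of the argument the gaps $\zeta_{l+1}-\zeta_l$ are only known to tend to $+\infty$, not to be eventually monotone (that comes only after the Toda analysis of Section~\ref{SecToda}, which in turn uses the present proposition). The cure, as in \cite{MZajm12}, is a genuine continuity/maximal-interval bootstrap: one posits the inequality $\|q(s)\|_{\mathcal H}\le K\check J(s)$ on a maximal interval $[s_*,\infty)$ (nonempty since $\|q\|\to 0$ and $\check J>0$), uses item~(i) of Proposition~\ref{propdyn} together with the posited bound to show that $|\check J'|\lesssim J\,\check J$ there (so $\check J$ is quasi-monotone \emph{on that interval}), runs your integrate-from-$+\infty$ plus Gronwall steps to improve the constant to $K/2$, and concludes $s_*=s_0$. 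With that adjustment your proof is correct; without it the bootstrap inequality $N^2\le C\check J^2+CN^4$ is not yet justified.
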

\begin{proof}
  This statement is the analogous of Proposition 3.8 page 602 in the
  real-valued case given in \cite{MZajm12}, and its proof is quite
  similar, thanks to the equations given in Proposition
  \ref{propdyn}.
  See pages 606-609 in \cite{MZajm12} for details.
\end{proof}

Thanks to the previous proposition, we are in a position to refine the
differential equations satisfied by $\zeta_j(s)$ and $\theta_j(s)$
previously given in Proposition \ref{propdyn}, deriving the following result:
 \begin{proposition}[Refined equations on the solitons' centers and
   phases] \label{propref}
     Assume that $k\ge 2$. 
   There exists $\delta>0$ such that for $s$ large enough and for all $i=1,\dots,k$,
   \begin{align}
     \frac{1}{A} \zeta_i' &=-\cos(\theta_{i-1}-\theta_{i}) e^{-\frac{2}{p-1}  ( \zeta_i-\zeta_{i-1 }) }+ \cos(\theta_{i+1}-\theta_{i}) e^{-\frac{2}{p-1}(\zeta_{i+1}-\zeta_i ) }  + \mathcal{O}(J^{1+\delta}),\label{toda-1}\\
  \frac{1}{B} \theta_i' &=\sin(\theta_{i-1}-\theta_{i}) e^{-\frac{2}{p-1}  ( \zeta_i-\zeta_{i-1 }) }+ \sin(\theta_{i+1}-\theta_{i}) e^{-\frac{2}{p-1}(\zeta_{i+1}-\zeta_i ) } + \mathcal{O}(J^{1+\delta}),\label{toda-2}
   \end{align}
   where $J$ is defined in \eqref{defJ} and
\begin{align}
  A&=\frac{p(p-1)}{2\iint \rho(y)dy}\kappa_0^{p-1}2^{\frac 2{p-1}}\int_{-\infty}^\infty \cosh^{-\frac{2p}{p-1}}(z)e^{\frac {2z}{p-1}}\tanh z dz >0,\label{defA2}\\
  B&=\frac{p-1}{(p+3)\iint \rho(y)dy}\kappa_0^{p-1}2^{\frac
     2{p-1}}\int_{-\infty}^\infty \cosh^{-\frac{2p}{p-1}}(z)e^{\frac {2z}{p-1}} dz.\nonumber
\end{align}
\end{proposition}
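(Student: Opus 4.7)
I would begin by differentiating the two modulation identities in \eqref{orth} with respect to $s$; using \eqref{zero} and substituting $\partial_s q$ from \eqref{eqq}, this produces, for each $i \in \{1,\dots,k\}$, a near-diagonal linear system whose unknowns are $\zeta_i'$ and $\theta_i'$ (the diagonal entries equal $1$ up to $\mathcal{O}(\|q\|_\H)$ by \eqref{majoration'} and \eqref{1}, off-diagonal entries being $\mathcal{O}(\|q\|_\H)$), and whose right-hand sides are the projections $\check\pi_0^{d_i}[\Re(e^{-i\theta_i}\partial_s q)]$ and $\tilde\pi_0^{d_i}[\Im(e^{-i\theta_i}\partial_s q)]$. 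I would then argue that all terms on the right-hand side of \eqref{eqq} except the residual $R(y,s)$ contribute only $\mathcal{O}(J^{1+\delta})$: the linear term $Lq$ is absorbed by \eqref{pld1}--\eqref{pld2} together with the exponential localization of $\check W_0(d_i)$ and $\tilde W_0(d_i)$ far from the other solitons (where the potential $\psi$ of \eqref{defL} differs from the single-soliton potentials $\check\psi, \tilde\psi$ of \eqref{defpsi}); the nonlinearity $f(q_1)$ contributes $\mathcal{O}(\|q\|_\H^{\bar p}) = \mathcal{O}(\check J^{\bar p}) = \mathcal{O}(J^{1+\delta})$ thanks to Proposition \ref{propsize} and $\bar p = \min(p,2) > 1$; and the cross terms in the modulation derivative sum fall into the same error.

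The leading behavior therefore comes from projecting $R$. I would write $K = e^{i\theta_i}(\kappa(d_i,y) + Z_i)$ with $Z_i = \sum_{j\ne i} e^{i(\theta_j - \theta_i)}\kappa(d_j,y)$, and Taylor-expand $|K|^{p-1}K$ to first order in $Z_i/\kappa(d_i)$ (which is small in the bulk of the $i$-th soliton by \eqref{separation}):
\begin{equation*}
  e^{-i\theta_i}R = \kappa(d_i)^{p-1}\bigl(p\,\Re Z_i + i\,\Im Z_i\bigr) - \sum_{j\ne i} e^{i(\theta_j - \theta_i)}\kappa(d_j,y)^p + \mathcal{O}\bigl(\kappa(d_i)^{p-2}|Z_i|^2\bigr).
\end{equation*}
Pairing against $\check W_{0,2}(d_i)$ turns each $e^{i(\theta_j - \theta_i)}$ arising in $\Re Z_i$ into $\cos(\theta_j - \theta_i)$, producing the structure of \eqref{toda-1}; pairing against $\tilde W_{0,2}(d_i)$ gives $\sin(\theta_j - \theta_i)$, producing \eqref{toda-2}.

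Finally, I would evaluate the dominant cross-soliton integrals $\int \kappa(d_i)^{p-1}\kappa(d_j)\check W_{0,2}(d_i)\,\rho\, dy$ and $\int \kappa(d_i)^{p-1}\kappa(d_j)\tilde W_{0,2}(d_i)\,\rho\, dy$ asymptotically. The substitution $y = -\tanh z$ turns $\kappa(d_l,y)$ into $\kappa_0 \bigl(\cosh z/\cosh(z+\zeta_l)\bigr)^{2/(p-1)}$ and simplifies the dual null modes, so that for $j = i\pm 1$ the integrals concentrate in the overlap region and yield, as $|\zeta_j - \zeta_i|\to \infty$, values of the form $\pm A\, e^{-\frac{2}{p-1}|\zeta_j - \zeta_i|}(1 + o(1))$ and $B\, e^{-\frac{2}{p-1}|\zeta_j - \zeta_i|}(1 + o(1))$, while indices $|j - i|\ge 2$ gain an extra exponential factor via \eqref{separation} and fall into the error $\mathcal{O}(J^{1+\delta})$. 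The \emph{main obstacle} will be the explicit reduction of these interaction integrals to the universal integrals appearing in \eqref{defA2}, which requires integration by parts based on the elliptic identity \eqref{elliptic} for $\check W_{0,1}$ and $\tilde W_{0,1}$, together with the delicate verification that the diagonal subtraction $-\sum_{j\ne i}e^{i(\theta_j-\theta_i)}\kappa(d_j)^p$ in $R$ and the modulation correction sum $\sum_j (i\theta_j'\kappa(d_j) + d_j'\partial_d\kappa(d_j))$ in \eqref{eqq} cancel at leading order, leaving only the clean trigonometric coefficients present in \eqref{toda-1}--\eqref{toda-2}.
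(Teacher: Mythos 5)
Your plan coincides with the paper's proof: it refines the projections already set up in the proof of Proposition~\ref{propdyn}(i), argues that every term of \eqref{eqq} other than the residual $R$ contributes $O(J^{1+\delta})$ (using Proposition~\ref{propsize}), Taylor-expands $R$ around each soliton (the paper's Lemma~\ref{lemtayR}), and evaluates the surviving cross-soliton integrals by the substitution $y=-\tanh z$ together with the elliptic identity \eqref{elliptic} (the paper's Lemma~\ref{E}). Two small points of precision: the nonlinearity's projection is bounded in the paper by $C\|q\|_{\q H}^2\lesssim \check J^2 = O(J^{1+\delta})$ via Lemma~\ref{lemjis}, which is what one needs uniformly for all $p>1$, rather than the weaker $\|q\|_{\q H}^{\bar p}$ you propose (which fails to be $O(J^{1+\delta})$ for $1<p\le\sqrt 2$); and the ``main obstacle'' you flag is not one, since the subtraction $-\sum_{j\ne i}e^{i\theta_j}\kappa(d_j)^p$ inside $R$ is controlled by localization alone (it is negligible on $\{y_{i-1}<y<y_i\}$), while the $j\ne i$ terms of the modulation sum are separately $O(\bar J\, J)$ by Proposition~\ref{propdyn}(i), so no cancellation between the two is required.
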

\begin{proof}
  See Section \ref{appdyn}.
\end{proof}
\begin{rk}
We still use the convention \eqref{conv}, together with the convention
$\theta_0\equiv\theta_{k+1}\equiv 0$.
\end{rk}
\begin{rk}
  Neglecting the error terms in  \eqref{toda-1}-\eqref{toda-2}, we see
  that the second equation has the following trivial solution
  $(\theta_i(s))_i=(\theta_{00}+i\pi)_i$, for some $\theta_{00}\in \m R$, regardless of the value of $(\zeta_1,\dots, \zeta_k)$. In that case, the first equation reduces
  to the real-valued first order Toda system \eqref{toda}. As a matter
  of fact, the constant $A$ mentioned here and there is the same. For
  that reason, we call system \eqref{toda-1}-\eqref{toda-2} \textit{``the
  complex-valued first order Toda system''}. Note that
the core of
our paper is to prove that asymptotically as $s\to \infty$,
system \eqref{toda-1}-\eqref{toda-2} behaves like the real-valued case
\eqref{toda}, in the sense that $\theta_{i+1}-\theta_i \to \pi$ and
$\zeta_i - [\zeta_{00}+\bar \zeta_i(s)]\to 0$ as $s\to \infty$ for
some constant $\zeta_{00}$, where $(\bar \zeta_1(s),\dots,\bar
\zeta_k(s))$ is the solution of \eqref{toda} mentioned right before
that equation. This fact will be proved in Proposition \ref{prop:edo-main} in
Section \ref{SecToda} below.
\end{rk}

\section{Projection of the PDE on the different components}\label{appdyn}
This section is devoted to the proof of different statements following
from the projection of equation \eqref{eqq} on the components
mentioned in \eqref{components}, namely, Propositions \ref{propdyn}
and \ref{propref} (note that Proposition \ref{propsize} follows
exactly as in the real-valued case). Most of the estimates follow as in
the real-valued case studied in \cite{MZajm12}, and others need some
delicate treatment. Therefore, we will recall the former estimates and
focus on the latter. We proceed in two subsections, each dedicated to
the proof of one proposition. Of course, as in Section \ref{secstep2},
we assume here that
\[
k\equiv k(x_0)\ge 2,
\]
where $x_0\in \q S$ and  $k(x_0)$ is the number of solitons appearing
in the decomposition of $w_{x_0}(y,s)$ given in Proposition \ref{propdecomp}. 


\subsection{Differential inequalities on the different components}
We prove here Proposition \ref{propdyn}.
\begin{proof}[Proof of Proposition \ref{propdyn}]
$ $\\
(i) Consider some $l=1,\dots,k$. The proof follows from the projection
of equation \eqref{eqq} satisfied by $q(y,s)$ \eqref{defq} on the 3
directions $e^{i\theta_l}\check F_1(d_l,y)$, $e^{i\theta_l}\check
  F_0(d_l,y)$ and $e^{i\theta_l}\tilde F_0(d_l,y)$ shown in
  \eqref{directions}. Since the components along those directions
  involve projections of 
\begin{equation}\label{defql}
q^l\equiv e^{-i\theta_l}q,
\end{equation}
as one can see from \eqref{components0}, 
it is natural to write the following PDE satisfied by $q^l(y,s)$:
\begin{align} 
\frac{\partial }{\partial s} \begin{pmatrix} \qlun\\\qldeux \end{pmatrix} =&\; L_{d_l}\begin{pmatrix}\qlun\\\qldeux\end{pmatrix}+\begin{pmatrix}0\\  V_l(y,s,\qlun)\end{pmatrix}+\begin{pmatrix}0\\ G_l(\qlun)\end{pmatrix}+\begin{pmatrix}0\\ R_l(y,s)\end{pmatrix}-i\theta_l'\begin{pmatrix} \qlun(y,s)\\\qldeux(y,s)\end{pmatrix}\label{linearise-complexe}\\
&+\begin{pmatrix}-i\theta_l'\kappa(d_l)-d_l' \partial_d
  \kappa(d_l)-\sum_{j\neq
    l}e^{i(\theta_j-\theta_l)}(i\theta_j'\kappa(d_j,y)+d_j'\partial_d
  \kappa(d_j))\\0 \end{pmatrix},  \nonumber
\end{align}
where
\begin{align}
L_{d_l}\begin{pmatrix}\qlun\\\qldeux\end{pmatrix}=&\begin{pmatrix}\qldeux \\\mathcal{L}\qlun-\frac{2(p+1)}{(p-1)^2}\qlun+p \kappa^{p-1}(d_l)\check \qlun+i\kappa^{p-1}(d_l)\tilde \qlun-\frac{p+3}{p-1} \qldeux- 2 y \partial_y \qldeux \end{pmatrix}, \nonumber\\
V_l(y,s,\qlun)=\;&|K|^{p-1}\qlun+(p-1) |K|^{p-3}K
e^{-i\theta_l}[\qlun \cdot (e^{-i\theta_l} K)]-p \kappa(d_l)^{p-1}\check \qlun-i\kappa(d_l)^{p-1}\tilde \qlun,\nonumber \\
   G_l(\qlun)=\;&e^{-i\theta_l}{f}(e^{i\theta_l} \qlun),\label{defG}\\
          f(\xi)=\;&|K+\xi|^{p-1}(K+\xi)-|K|^{p-1}K-|K|^{p-1}\xi-(p-1)|K|^{p-3}K(\xi
            \cdot K), \nonumber\\
R_l(y,s) =\;&e^{-i\theta_l}R(y,s),\nonumber\\
R(y,s)=\;&|K|^{p-1}K-\sum_{j=1}^{k}e^{i\theta_j(s)}\kappa^{p}(d_j(s), y)\nonumber
\end{align}
and $K(y,s)$ is defined in \eqref{defK}.
Note that the notation $(X \cdot Y)$ stands for the standard inner
product in $\m R^2$, and that whenever $X\in \m C$, we identify it
with $(\Re X, \Im X)$ which is in $\m R^2$.\\
%
%
Dissociating the real and imaginary parts in
\eqref{linearise-complexe} as in \eqref{riz}, we write
\begin{equation}\label{riql}
  q^l=\check q^l + i \tilde q^l\mbox{ and }
  \|q^l\|_{\q H}^2 = \|\check q^l\|_{\q H}^2 + \|\tilde q^l\|_{\q H}^2, 
\end{equation}
then we get for all $s\ge s_0$:
\begin{eqnarray}\label{170'}\frac{\partial }{\partial s}\begin{pmatrix} \check \qlun\\\check \qldeux
\end{pmatrix}&=\check L_{d_l} \begin{pmatrix} \check \qlun \\\check \qldeux \end{pmatrix}+\begin{pmatrix}0\\ \check V_l(y,s,\qlun)\end{pmatrix}+\begin{pmatrix}0\\ \check G_l(\qlun)\end{pmatrix}+\begin{pmatrix}0\\ \check R_l(y,s)\end{pmatrix}+\theta_l'\begin{pmatrix} \tilde \qlun(y,s)\\ \tilde \qldeux(y,s)\end{pmatrix}\notag\\&+\begin{pmatrix}-d_l' \partial_d \kappa(d_l)-\sum_{j\neq l}\left(\cos(\theta_j-\theta_l) d_j'\partial_d \kappa(d_j)-\sin (\theta_j-\theta_l)\theta_j'(s)\kappa(d_j(s),y)\right)\\0 \end{pmatrix}
\end{eqnarray}
and
\begin{align}\label{170''}&\frac{\partial }{\partial s} \begin{pmatrix} \tilde \qlun\\\tilde \qldeux
\end{pmatrix}=\tilde L_{\bb_l}\begin{pmatrix} \tilde \qlun\\\tilde \qldeux \end{pmatrix}+\begin{pmatrix}0\\ \tilde V_l(y,s,\qlun)\end{pmatrix}+\begin{pmatrix}0\\ \tilde G_l(\qlun)\end{pmatrix}+\begin{pmatrix}0\\ \tilde R_l(y,s)\end{pmatrix}-\theta_l'\begin{pmatrix} \check \qlun(y,s)\\ \check \qldeux(y,s)\end{pmatrix}\notag\\&+\begin{pmatrix}-\theta_l'  \kappa(d_l)+\sum_{j\neq l}\cos(\theta_j-\theta_l) \theta_j'(s)\kappa(d_j(s),y)+\sin (\theta_j-\theta_l)d_j'(s)\partial_d \kappa(d_j(s),y)\\0 \end{pmatrix}
\end{align}
where the operators $\check L_d$ and $\tilde L_d$ are introduced in
\eqref{barL_bb } and \eqref{tildeL_bb }.

\medskip

The proof will follow from projecting equation \eqref{170'} with the
projector $\check \pi^{d_l(s)}_\lambda$ \eqref{barpi} for $\lambda =0$
and $1$, and also the projection of equation \eqref{170''} with the
projector $\tilde \pi^{d_l(s)}_0$ defined in \eqref{pi}. The following
2 steps are respectively devoted to the projections of equations
\eqref{170'} and \eqref{170''}.

\bigskip

{\bf Step 1: Projection of equation (\ref{170'}) with
  $\check \pi^{d_l(s)}_\lambda$}

 Projecting equation (\ref{170'}) with the projector $\check \pi_\lambda^{\bb_l} $ (\ref{barpi}) for $\lambda=0$ and $\lambda=1$, we write
 \begin{eqnarray}
   &\check \pi_\lambda^{\bb_l}( \partial_s\check q_l)=\check
     \pi_\lambda^{\bb_l}( \check L_d\check q_l)+\check
     \pi_\lambda^{\bb_l} \begin{pmatrix}0\\ \check
       V_l(y,s,\qlun)\end{pmatrix}+\check
   \pi_\lambda^{\bb_l} \begin{pmatrix}0\\ \check
     G_l(\qlun)\end{pmatrix}+\check \pi_\lambda^{\bb_l} \begin{pmatrix}0\\
     \check R_l(y,s)\end{pmatrix}+\theta_l'\check
   \pi_\lambda^{\bb_l} \begin{pmatrix} \tilde \qlun(y,s)\\ \tilde
     \qldeux(y,s)\end{pmatrix}\notag\\
   & +\check \pi_\lambda^{\bb_l} \begin{pmatrix}-d_l' \partial_d \kappa(d_l)-\sum_{j\neq l}\left( \cos(\theta_j-\theta_l) d_j'\partial_d \kappa(d_j)-\sin (\theta_j-\theta_l)\theta_j'(s)\kappa(d_j(s),y) \right)\\0 \end{pmatrix}\label{projpdl}
\end{eqnarray}
Many projections in the previous line were already estimated in the
previous literature. To save space, we give those estimates with no
proof and kindly refer the reader to those papers for a
justification.

\medskip

- From pages 105 to 106 in the real-valued one-soliton case in \cite{MZjfa07}, together with
\eqref{defa1l}, \eqref{defzetaj} and \eqref{riql}, it follows that
\begin{align}
\left|\check \pi_\lambda^{\bb_l(s)} ( \partial_s \check q^l(s))-
\left(\check\alpha_\lambda^l(s)\right)'\right|&\le \frac{C_0}{1-\bb_l(s)^2} |\bb_l'(s)|\cdot
\|\check q^l(s)\|_{\mathcal{ H}} \le C_0 |\zeta_l'(s)|\cdot \|
q(s)\|_{\mathcal{ H}},\nonumber\\
\check \pi_\lambda^{\bb_l(s)} (\check L_{\bb(s)}(\check q^l(s)))&=\lambda \check
                                                 \alpha_\lambda^l(s),\nonumber\\
\bb_l'(s)\check \pi_\lambda^{\bb_l(s)}\begin{pmatrix}\partial_d
  \kappa(d_l(s))\\0\end{pmatrix}&=\frac{2\kappa_0}{p-1}\zeta_l'(s)\delta_{\lambda,0},\label{adel}
\end{align}
where $\check \alpha_\lambda^l(s)$ is defined in \eqref{defa1l} and
the Kronecker symbol satisfies hereafter $\delta_{0,0}=1$ and $\delta_{1,0}=0$.

\medskip

- From pages 631 to 635 in the real-valued multi-soliton case in
\cite{MZajm12}, together with \eqref{defzetaj}, \eqref{defql} and
\eqref{defG}, we have
\begin{align}
  \left|\check \pi_\lambda^{\bb_l(s)} \begin{pmatrix}0\\\check
      G_l(\qlun)\end{pmatrix}\right|
 \le C\iint \kappa(d_l(s),y)|f(q_1)|\rho(y)dy
  &\le C {\m 1}_{\{p\ge 2\}}\|q\|_{\mathcal{ H}}^p + C
    J_l(s)\|q\|_{\mathcal{ H}}^2,\label{nl}\\
 \left| \check \pi_\lambda^{\bb_l(s)} \begin{pmatrix}0\\   \check
     R_l(y,s)\end{pmatrix}\right|
\le C \iint \kappa(d_l(s),y)|R(y,s)|\rho(y)dy
  & \le CJ(s),\label{rl}\\
  \left|d_j'(s)\check
  \pi_\lambda^{d_l(s)}\begin{pmatrix}
     \partial_d\kappa(d_j(s))\\0 \end{pmatrix}\right|
  \le C|d_j'(s)|\iint \frac{\kappa(d_l,y)}{1-y^2} |\partial_d \kappa(d_j,y)| \rho(y) dy&\nonumber\\
\le C |\zeta_j'(s)|\int_{-1}^1\frac{\kappa(d_l,y)}{1-y^2} \kappa(d_j,y) \rho(y) dy
  &\le C \bar  J(s)  |\zeta_j'(s)|, \label{adam}
\end{align}
where $j\neq l$,
\begin{equation}\label{defjis}
J_l(s) = \iint \kappa(d_l(s),y)|K(y,s)|^{p-2} dy,
\end{equation}
$K(y,s)$, $J(s)$ and $\bar J(s)$ are defined in \eqref{defK}, \eqref{defJ} and
\eqref{defjb}, and the notation ${\m 1}_{\{p\ge 2\}}$ stands hereafter
for the indicator function of the set $\{p\ge 2\}$. Moreover, following our estimates in the same place in \cite{MZajm12}, we know that
\begin{equation}\label{leila}
  \left||K|^{p-1}-\kappa(d_l(s),y)^{p-1}\right|
  +\left||K|^{p-3}[e^{-i\theta_l(s)} K]^2-\kappa(d_l(s),y)^{p-1}\right|\le
  C \bar V(y,s)
\end{equation}
 where
\begin{equation}\label{defvb}
\bar V(y,s)=\left\{\un{y_{l-1}}{y_l} \sum_{j\neq
  l}\kappa(d_l,y)^{p-2}\kappa(d_j,y) + \sum_{j\neq l}
\kappa(d_j,y)^{p-1}\un{y_{j-1}}{y_j}\right\},
\end{equation}
$y_0=-1$, $y_j= \tanh(\frac{\zeta_j+\zeta_{j+1}}2)$ if $j=1,..,k-1$ and $y_k=1$. 
We also have the following:
\begin{cl}[Projection of the potential term with $\check \pi_\lambda^{d_l(s)}$]\label{clkooli}
Whenever $V=V(y,s)$ satisfies $|V(y,s)|\le C |q_1(y,s)|\bar V(y,s)$
where $\bar V$ is defined in \eqref{defvb}, it holds that
  \begin{equation*}
    \left| \check\pi_\lambda^{\bb_l(s)}\vc{0}{V}\right|
\le C \iint \kappa(d_l(s),y)|V(y,s)|\rho(y)dy
    \le C \|q(s)\|_{\mathcal{ H}}^2+CJ(s)^{1+\delta},
\end{equation*}
where $J(s)$
is defined in \eqref{defJ} and $\delta>0$.
\end{cl}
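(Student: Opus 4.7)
The claim consists of two inequalities of quite different nature: the first is a pointwise comparison between the dual eigenfunction $\check W_{\lambda,2}(d_l,\cdot)$ and the soliton profile $\kappa(d_l,\cdot)$, whereas the second is a soliton-interaction estimate. My plan is to establish them in turn.

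For the first inequality, expanding the projector via \eqref{barpi}--\eqref{defphi} gives
\[
\check\pi_\lambda^{d_l(s)}\vc{0}{V} \;=\; \iint \check W_{\lambda,2}(d_l(s),y)\,V(y,s)\,\rho(y)\,dy,
\]
so it suffices to establish the pointwise bound $|\check W_{\lambda,2}(d_l,y)|\le C\,\kappa(d_l,y)$ uniformly in $(d_l,y)\in(-1,1)^2$. From the explicit formulas \eqref{defk} and \eqref{defwl2}, the ratios $\check W_{1,2}(d_l,y)/\kappa(d_l,y)$ and $\check W_{0,2}(d_l,y)/\kappa(d_l,y)$ reduce to constant multiples of $(1-y^2)/(1+d_l y)$ and $(y+d_l)/(1+d_l y)$, respectively. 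A short analysis near the four corners $(d_l,y)=(\pm 1,\mp 1)$---where both numerator and denominator vanish simultaneously---shows that these ratios remain uniformly bounded, giving the first inequality.

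For the second inequality, my strategy is to apply Cauchy--Schwarz to $\iint \kappa(d_l)|q_1|\bar V\,\rho\,dy$, combine with the trivial bound $\iint|q_1|^2\rho\,dy\le\|q\|_\H^2$ coming from \eqref{defnh}, and then use AM--GM to reduce matters to showing
\[
I(s) \;:=\; \iint \kappa(d_l(s),y)^2\,\bar V(y,s)^2\,\rho(y)\,dy \;\le\; C\,J(s)^{1+\delta}
\]
for some $\delta=\delta(p)>0$. Since the indicators $\un{y_{i-1}}{y_i}$ in \eqref{defvb} have pairwise disjoint supports, expanding $\bar V^2$ and applying Cauchy--Schwarz to the sum over $j$ reduces $I$ to a finite number of soliton-interaction integrals of the two forms $\iint_{y_{l-1}}^{y_l}\kappa(d_l)^{2(p-1)}\kappa(d_j)^2\rho\,dy$ and $\iint_{y_{j-1}}^{y_j}\kappa(d_l)^2\kappa(d_j)^{2(p-1)}\rho\,dy$, with $j\neq l$.

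The main obstacle is extracting the extra factor $J^\delta$ rather than settling for a bound of order $J$. The gain is possible because in each of these integrals the total exponent of the $\kappa$'s adds up to $2p$, whereas a single summand of $J$ in \eqref{defJ} only allocates decay at rate $2/(p-1)$. Concretely, I would perform the hyperbolic substitution $y=-\tanh(\eta-\zeta_l)$ (respectively $\eta-\zeta_j$), under which $1+d_i y$ simplifies via the identity $1+\tanh a\tanh b=\cosh(a+b)/(\cosh a\cosh b)$ and the integrand becomes exponentially localized in $\eta$. A direct computation then bounds each interaction integral by $Ce^{-\beta|\zeta_l-\zeta_j|}$ with $\beta>2/(p-1)$. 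Summing over $j\neq l$, using \eqref{separation} and the conventions \eqref{conv}, and dominating by the nearest-neighbor gap, yields $I\le CJ^{1+\delta}$. This is the complex-valued analogue of the computation carried out in \cite{MZajm12}; the phases $e^{i\theta_j}$ in $K$ play no role here since $\bar V$ is built from the nonnegative quantities $\kappa(d_i,\cdot)$ alone.
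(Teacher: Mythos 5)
Your proof of the first inequality is correct: the projection formula $\check\pi_\lambda^{d_l}\vc{0}{V}=\iint \check W_{\lambda,2}(d_l,y)V(y,s)\rho\,dy$ reduces the question to the pointwise bound $|\check W_{\lambda,2}(d,y)|\le C\kappa(d,y)$, and the ratios $\frac{1-y^2}{1+dy}$ and $\frac{y+d}{1+dy}$ are indeed uniformly bounded on $(-1,1)^2$.

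The second inequality, however, has a genuine gap. Applying $L^2$ Cauchy--Schwarz with the weight $\rho$ pushes all the singular weight onto the factor $\kappa(d_l)\bar V$, and the resulting quantity $I(s)=\iint \kappa(d_l)^2\bar V^2\rho\,dy$ is \emph{not} bounded by $CJ^{1+\delta}$ in general. To see this concretely, take the contribution of the first term of $\bar V$ with $j=l+1$, substitute $y=\tanh\xi$, $d_i=-\tanh\zeta_i$, and use $1+d_iy=\cosh(\xi-\zeta_i)/(\cosh\xi\cosh\zeta_i)$, $1-d_i^2=\cosh^{-2}\zeta_i$. A direct computation gives
\[
\kappa(d_l)^{2(p-1)}\kappa(d_{l+1})^2\rho\,dy
=\kappa_0^{2p}\,\frac{\cosh^2\xi}{\cosh^4(\xi-\zeta_l)\,\cosh^{\frac{4}{p-1}}(\xi-\zeta_{l+1})}\,d\xi,
\]
so, unlike what you state, the integrand is \emph{not} merely exponentially localized around $\xi=\zeta_l$: there is a non-decaying factor $\cosh^2\xi$. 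Integrating over $(\tt_{l-1},\tt_l)$ and shifting by $\zeta_l$ produces an extra factor of order $\cosh^2\zeta_l$, so the nearest-neighbour interaction integral is of size $\cosh^2\zeta_l\,e^{-\frac{4}{p-1}(\zeta_{l+1}-\zeta_l)}$, not $Ce^{-\beta|\zeta_l-\zeta_j|}$. Since at this stage of the argument one has no a priori control on $|\zeta_l(s)|$ beyond the separation \eqref{separation}, the factor $\cosh^2\zeta_l$ can grow without bound, and when $|\zeta_l|$ is comparable to $\zeta_{l+1}-\zeta_l$ the bound $I(s)\lesssim J^{1+\delta}$ fails (for $p>3$ it even fails when the $\zeta_j$ are placed symmetrically). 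The source of the loss is that $\kappa(d_l)^{p-1}$ does not lie in $L^2_\rho$ uniformly in $d_l$.

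The fix is not to use the plain bound $\iint|q_1|^2\rho\,dy\le\|q\|_\H^2$, but the stronger embedding \eqref{embed}, which gives $|q_1(y)|(1-y^2)^{1/(p-1)}\le C\|q\|_\H$. Pulling $q_1$ out in $L^\infty$ reduces the problem to bounding $\iint \kappa(d_l)\bar V\,(1-y^2)^{-1/(p-1)}\rho\,dy$; in the hyperbolic variable this integrand has $\cosh^0\xi$ weight (the dangerous factor cancels), and one readily obtains a bound of order $J$, whence Young's inequality gives $C\|q\|_\H^2+CJ^{1+\delta}$. Equivalently, one may keep the Cauchy--Schwarz strategy but use the other half of \eqref{embed}, namely $\|q_1\|_{L^2_{\rho/(1-y^2)}}\le C\|q\|_\H$, pairing $|q_1|(\rho/(1-y^2))^{1/2}$ against $\kappa(d_l)\bar V((1-y^2)\rho)^{1/2}$; the extra factor $(1-y^2)$ on the second factor again removes the $\cosh^2\xi$ growth. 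Either way, the gain over your argument is exactly the negative power of $1-y^2$ provided by \eqref{embed}, which your proposal does not exploit.
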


\medskip

- From page 5917 in the complex-valued one-soliton case in
\cite{Atams15}, together with \eqref{riql}, we have
\begin{equation}\label{boris}
\left|\check \pi_\lambda^{\bb_l(s)}  \begin{pmatrix}
    \tilde \qlun(s)\\\tilde\qldeux(s) \end{pmatrix}\right|
\le C \|q(s)\|_{\mathcal{ H}}.
\end{equation}

From these estimates, we see that some projections of \eqref{projpdl}
are already controlled.
In the following part, we estimate the other projections.

\medskip

- \textit{Estimate of $\check \pi_\lambda^{\bb_l(s)} \begin{pmatrix}0\\ \check
       V_l(y,s,\qlun)\end{pmatrix}$}. By definition \eqref{defG} of
   $V_l$, recalling the decomposition \eqref{riql} of $\qlun$, we
   write
   \begin{align*}
  V_l(y,s,\qlun)=W_1+W_2+W_3
   \end{align*}
   where
   \begin{align*}
     W_1= & [|K|^{p-1}- \kappa(d_l)^{p-1}]\qlun,\;
     W_2 = (p-1)[[|K|^{p-3}[e^{-i\theta_l}K]^2 -
     \kappa(d_l)^{p-1}]\check\qlun,\\
     W_3 = &(p-1)|K|^{p-3}K e^{-i\theta_l}
                   \left\{[\qlun \cdot (e^{-i\theta_l} K)] -
                   e^{-i\theta_l}K\check \qlun\right\}.
   \end{align*}
   Since
   $|e^{-i\theta_l}K-\kappa(d_l(s))|\le \sum_{j\neq l}\kappa(d_j(s))$
   by definition \eqref{defK} of $K$, using again \eqref{riql}, we see that
\[
|W_3|\le C|K|^{p-2}|\qlun|\sum_{j\neq l}\kappa(d_j(s)).
\]
Proceeding as in the proof of \eqref{leila} (which follows from pages
633 to 635 in \cite{MZajm12}), we easily see that $|W_3|\le C |q_1(y,s)|\bar
V(y,s)$ where $\bar V$ is defined in \eqref{defvb}. Using \eqref{leila}, we see that 
\begin{equation}\label{marzouk}
|V_l(y,s,\qlun)|\le C  |q_1(y,s)|\bar V(y,s).
\end{equation}
Using Claim \ref{clkooli}, we see that
\begin{equation}\label{hassine}
\left|\check \pi_\lambda^{\bb_l(s)} \begin{pmatrix}0\\ \check
    V_l(y,s,\qlun)\end{pmatrix}\right|
\le C \iint \kappa(d_l(s),y)| V_l(y,s,\qlun)|\rho(y) dy
\le C \|q(s)\|_{\mathcal{ H}}^2+CJ(s)^{1+\delta}
\end{equation}
for some $\delta>0$.

\medskip

- \textit{Estimate of $\check\pi_\lambda^{\bb_l(s)} \begin{pmatrix}0\\ \check
    G_l(\qlun)\end{pmatrix}$}.

The following lemma allows us to get the good estimate:
\begin{lemma}[Control of $J_i(s)$]\label{lemjis}
For any $s\ge s_0$ and $i=1,...,k$, it holds that $J_i(s)\le C$, where $J_i(s)$
is defined in \eqref{defjis}.
\end{lemma}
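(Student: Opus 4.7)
The plan is to compute $J_i(s)$ after the change of variables $y=\tanh\eta$. Using the identity $1-\tanh\zeta\tanh\eta = \cosh(\eta-\zeta)/(\cosh\zeta\cdot\cosh\eta)$ applied to each $1+d_j y$ with $d_j=-\tanh\zeta_j$, and setting $a=2/(p-1)$, one finds
\[
\kappa(d_j,y) = \kappa_0\,\frac{\cosh^a\eta}{\cosh^a(\eta-\zeta_j)},\qquad dy = \cosh^{-2}\eta\,d\eta.
\]
The total power of $\cosh\eta$ in the integrand $\kappa(d_i,y)|K|^{p-2}\,dy$ cancels because $a + a(p-2) - 2 = a(p-1)-2 = 0$, yielding
\[
J_i(s) = \kappa_0^{p-1}\int_{-\infty}^{\infty} \cosh^{-a}(\eta-\zeta_i)\,\Big|\sum_{j=1}^{k} e^{i\theta_j}\cosh^{-a}(\eta-\zeta_j)\Big|^{p-2}\,d\eta.
\]

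For $p\ge 2$, I would apply the triangle inequality inside the absolute value and use the elementary bound $(\sum_j t_j)^{p-2}\le C_{k,p}\sum_j t_j^{p-2}$ for nonnegative $t_j$ (valid when $p\ge 2$), reducing $J_i$ to a sum of cross-terms $\int_{\mathbb{R}}\cosh^{-a}(\eta-\zeta_i)\cosh^{-a(p-2)}(\eta-\zeta_j)\,d\eta$. The diagonal term $j=i$ equals $\int_{\mathbb{R}}\cosh^{-a(p-1)}\eta\,d\eta = \int_{\mathbb{R}}\cosh^{-2}\eta\,d\eta = 2$. Each off-diagonal term $j\ne i$ is a convolution of two sech-type profiles whose value is maximal when the shifts coincide and is uniformly bounded in $\zeta_j-\zeta_i$; in fact exponentially small in the separation.

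For $1<p<2$, the previous strategy fails because $t\mapsto t^{p-2}$ is decreasing and the triangle inequality goes the wrong way. Instead, I would split $\mathbb{R}$ into disjoint cores $\mathcal{C}_j = \{\eta : \cosh^{-a}(\eta-\zeta_j) \ge 2\sum_{l\ne j}\cosh^{-a}(\eta-\zeta_l)\}$ together with the complementary overlap set. On each $\mathcal{C}_j$ the reverse triangle inequality gives $\big|\sum_l e^{i\theta_l}\cosh^{-a}(\eta-\zeta_l)\big|\ge\frac{1}{2}\cosh^{-a}(\eta-\zeta_j)$, hence $|K|^{p-2}\le 2^{2-p}\cosh^{-a(p-2)}(\eta-\zeta_j)$ and one argues as in the $p\ge 2$ case. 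The overlap set, by the separation assumption \eqref{separation}, is concentrated in narrow neighborhoods of the midpoints $(\zeta_l+\zeta_{l+1})/2$, far from every $\zeta_i$, so the weight $\cosh^{-a}(\eta-\zeta_i)$ is exponentially small there, while the potentially singular factor $|K|^{-(2-p)}$ has exponent strictly less than $1$ and is integrable near any isolated zero of $K$.

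The main obstacle is the subcase $p<2$: destructive interference between solitons with nearly opposite phases can produce near-zeros of $K$ on the overlap region, making $|K|^{p-2}$ locally large. The proof must combine the exponential decay of $\cosh^{-a}(\eta-\zeta_i)$ at the overlap's location (which is far from $\zeta_i$ by \eqref{separation}) with the local integrability of the singularity $|K|^{-(2-p)}$ to obtain the uniform bound $J_i(s)\le C$, independent of $s\ge s_0$ and of $i$.
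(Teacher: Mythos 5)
Your setup is identical to the paper's: the change of variables $y=\tanh\eta$ produces the same expression for $J_i$, the case $p\ge 2$ is handled the same way, and for $p<2$ your decomposition into cores and overlap regions plays the same role as the paper's partition into intervals around the shifted midpoints $\mu_j+A$. You correctly identify the obstacle (destructive interference creating near-zeros of $K$ in the overlap region).

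However, there is a genuine gap at precisely the point you flag as ``the main obstacle.'' You assert ``the local integrability of the singularity $|K|^{-(2-p)}$ near any isolated zero of $K$,'' but isolatedness does not give integrability: the order of vanishing matters. If $K$ vanished to second order at the zero, the local singularity would be $|\eta-\eta_0|^{2(p-2)}$, which fails to be integrable whenever $1<p<3/2$. So the claim cannot be true as stated without further information. The paper closes exactly this gap: in the case $\cos(\theta_{j+1}-\theta_j)\le -7/16$ it considers $\cK_1=\Re(e^{-i\theta_j}\cK_0)$, shows it changes sign across a point $\check\xi_j\in(\mu_j-A,\mu_j+A)$ and, crucially, shows that $\partial_\xi\cK_1$ is bounded away from zero there (up to the overall scale $e^{-\frac{1}{p-1}(\zeta_{j+1}-\zeta_j)}$). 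This yields the \emph{linear} lower bound $|\cK_0|\ge|\cK_1|\gtrsim e^{-\frac{1}{p-1}(\zeta_{j+1}-\zeta_j)}|\xi-\check\xi_j|$, which makes $|\cK_0|^{p-2}$ integrable (since $p-2>-1$) and, after multiplying by the weight $\cosh^{-\frac{2}{p-1}}(\xi-\zeta_j)\lesssim e^{-\frac{1}{p-1}(\zeta_{j+1}-\zeta_j)}$, gives the bound $C e^{-(\zeta_{j+1}-\zeta_j)}$ on the overlap contribution. This derivative estimate — establishing that the zero is simple with a quantitatively controlled slope — is the core of the argument for $p<2$, and it is not supplied in your proposal; without it, ``the proof must combine'' these two effects remains a heuristic rather than a proof.
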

Indeed, thanks to this lemma, using \eqref{nl} and the boundedness of $\|q(s)\|_{\q H}$ (see
\eqref{orth}), we see that
\begin{equation}\label{nl1}
\left|\check\pi_\lambda^{\bb_l(s)} \begin{pmatrix}0\\ \check
    G_l(\qlun)\end{pmatrix}\right|
\le C\iint \kappa(d_l(s),y)|f(q)|\rho(y)dy
\le C \|q(s)\|_{\mathcal{ H}}^2.
\end{equation}
Let us then prove this lemma.
\begin{proof}[Proof of Lemma \ref{lemjis}]
 Let us recall that the real-valued case was handled in Lemma E.1 page 644 in
\cite{MZajm12}. In fact, because of the phase factor in the definition
\eqref{defK} of $K(y,s)$ in this complex-valued case, the situation is
more delicate. Anyway, starting as in \cite{MZajm12}, we use the
change of variables $y=\tanh \xi$ and write 
\begin{equation}\label{defkb}
J_i = \kappa_0^{p-1}\int_{\m R} \cosh^{-\frac
  2{p-1}}(\xi-\zeta_i)|\kz(\xi,s)|^{p-2} d\xi
\mbox{ where } \kz(\xi,s) = \sum_{j=1}^k e^{i\theta_j(s)}\cosh^{-\frac 2{p-1}}(\xi-\zeta_j).
\end{equation}
If $p \ge 2$, then $|\kz(\xi,s)| \le C$ and $|J_i(s)| \le C$.\\
If $p <2$,
taking advantage of the decoupling in the sum of the solitons (see \eqref{orth}), we write
\begin{equation}\label{b0} 
J_i = \kappa_0^{p-1}\sum_{j=1}^k \int_{\tt_{j-1}+A}^{\tt_j+A} \cosh^{-\frac 2{p-1}}(\xi-\zeta_i)|\kz(\xi,s)|^{p-2} d\xi  
\end{equation}
where $\tt_0 = - \infty$, $\tt_j = \frac{\zeta_j+\zeta_{j+1}}2$  if $j=1,..,k-1$, $\tt_k=\infty$ and $A=A(p)$ is fixed such that
\begin{equation}\label{mehdi}
e^{\frac {2A}{p-1}} \ge 5 e^{-\frac {2A}{p-1}}.
\end{equation}
This partition isolates each soliton in the definition of
$\kz(\xi,s)$. We don't take $A=0$, since in that case, for 
$j=1,\dots,k-1$, the solitons
number $j$ and $j+1$ are equal at $\xi =\tt_j$, by symmetry. With $A>0$ and large
as in \eqref{mehdi}, the $(j+1)$-th soliton is larger than the $j$-th.
Since $\cosh^{-\frac 2{p-1}}(\xi-\zeta_i)\le \cosh^{-\frac
  2{p-1}}(\xi-\zeta_j)$ on the interval $(\tt_{j-1}+A,\tt_j+A)$ for
large $s$, we write from \eqref{b0}
\begin{equation}\label{b1}
J_i \le \kappa_0^{p-1}\sum_{j=1}^k L_j\mbox{ where }
L_j=\int_{\tt_{j-1}+A}^{\tt_j+A} \cosh^{-\frac 2{p-1}}(\xi-\zeta_j)|\kz(\xi,s)|^{p-2} d\xi.
\end{equation}
Consider now some $j=1,...,k$, and let us work for $\xi$ in the
interval $(\tt_{j-1}+A,\tt_j+A)$ in order to bound the $L_j$ term
shown in \eqref{b1}. Since the exponent $p-2<0$, the idea is to bound $|\kz(\xi,s)|$  from below.\\
Note first from \eqref{mehdi} and \eqref{orth} that whenever $j\ge 2$, it follows that
\[
\cosh^{-\frac 2{p-1}}(\xi-\zeta_{j-1}(s)) \le \frac 14 \cosh^{-\frac 2{p-1}}(\xi-\zeta_j(s))
\]
for $s$ large enough.
Moreover, from the decoupling of the solitons shown in \eqref{orth},
it follows that for $s$ large enough,
\[
\sum_{|l-j|\ge 2} \cosh^{-\frac 2{p-1}}(\xi-\zeta_l(s)) \le \frac 14 \cosh^{-\frac 2{p-1}}(\xi-\zeta_j(s)).
\]
Now, if $j=k$, using a triangular inequality, we write
\begin{equation}\label{zahira}
  |\kz(\xi,s)|\ge
  \cosh^{-\frac 2{p-1}}(\xi-\zeta_k)
  - \cosh^{-\frac 2{p-1}}(\xi-\zeta_{k-1})
  - \sum_{l=1}^{k-2} \cosh^{-\frac 2{p-1}}(\xi-\zeta_l)
  \ge \frac 12 \cosh^{-\frac 2{p-1}}(\xi-\zeta_k).
\end{equation}
By definition \eqref{b1}, we see that
\[
L_k \le C\int_{\tt_{k-1}+A}^\infty \cosh^{-2}(\xi-\zeta_k(s)) d\xi \le C.
\]
Now, if $j\le k-1$, proceeding similarly,
we write
\begin{align}
   |\kz(\xi,s)|
  \ge &\;|e^{i\theta_j}\cosh^{-\frac 2{p-1}}(\xi-\zeta_j) +
  e^{i\theta_{j+1}}\cosh^{-\frac 2{p-1}}(\xi-\zeta_{j+1})|\nonumber\\
&   - \cosh^{-\frac 2{p-1}}(\xi-\zeta_{j-1})
        - \sum_{|l-j|\ge 2} \cosh^{-\frac 2{p-1}}(\xi-\zeta_l)\nonumber\\
  \ge &\;  |e^{i\theta_j}\cosh^{-\frac 2{p-1}}(\xi-\zeta_j) +
        e^{i\theta_{j+1}}\cosh^{-\frac 2{p-1}}(\xi-\zeta_{j+1})|
        - \frac 12 \cosh^{-\frac 2{p-1}}(\xi-\zeta_j).\label{rahma}
\end{align}
Let us now recall the following consequence of the Cauchy-Schwarz
inequality, for any nonnegative $X$ and $Y$:
\begin{equation}\label{cs}
|e^{i\theta_j}X + e^{i\theta_{j+1}}Y|^2
\ge (X^2+Y^2)\min(1,1+\cos(\theta_{j+1}-\theta_j)).
\end{equation}
Accordingly, we distinguish between 2 cases:\\
\textbf{- Case 1}: If $\cos(\theta_{j+1}-\theta_j)\ge - \frac 7{16}$, then we
see from \eqref{cs} that
\[
  |e^{i\theta_j}X + e^{i\theta_{j+1}}Y|\ge \frac 34 X,
\]
hence, using \eqref{rahma}, we write
\begin{equation}\label{low}
  |\kz(\xi,s)| \ge \frac 14 \cosh^{-\frac 2{p-1}}(\xi-\zeta_j),
\end{equation}
and $J_j(s) \le C$ as for the case $j=k$.\\
\textbf{- Case 2}: If $\cos(\theta_{j+1}-\theta_j)\le - \frac 7{16}$,
then restricting the integration to the interval $(\tt_{j-1}+A,
\tt_j-A)$, we remark that
\[
\cosh^{-\frac 2{p-1}}(\xi-\zeta_{j+1}(s)) \le \frac 14 \cosh^{-\frac 2{p-1}}(\xi-\zeta_j(s)),
\]
thanks to \eqref{mehdi}. Therefore, using a triangular inequality as
for \eqref{zahira}, we see that \eqref{low} holds again, hence, 
\[
\int_{\tt_{j-1}+A}^{\tt_j-A} \cosh^{-\frac
  2{p-1}}(\xi-\zeta_j)|\kz(\xi,s)|^{p-2} d\xi
\le C
  \]
as before. It remains then to bound the integral on the interval
$(\tt_j-A,\tt_j+A)$. Accordingly, we work only in that interval in
what follows.
Let us first write 
  \[
    |\kz(\xi,s)| \ge |\ku(\xi,s)|,
  \]
  where
  \[
    \ku(\xi,s) = \Re(e^{-i\theta_j(s)} \kz(\xi,s))
    =\cosh^{-\frac 2{p-1}}(\xi-\zeta_j)
    +\sum_{l\neq j} \cos(\theta_l-\theta_j) \cosh^{-\frac 2{p-1}}(\xi-\zeta_l).
  \]
Using the decoupling in the solitons' sum (see \eqref{orth}), we write
\begin{align}
  \ku(\xi,s)
   =& \;\cosh^{-\frac 2{p-1}}(\xi-\zeta_j)
   +\cos(\theta_{j+1}-\theta_j) \cosh^{-\frac 2{p-1}}(\xi-\zeta_{j+1})\nonumber\\
&   +\sum_{l\not\in\{j,j+1\}} \cos(\theta_l-\theta_j) \cosh^{-\frac 2{p-1}}(\xi-\zeta_l)\nonumber\\
=&\;  2^{-\frac 2{p-1}}e^{-\frac 1{p-1}(\zeta_{j+1}-\zeta_j)}[e^{-\frac {2(\xi-\tt_j)}{p-1}}+ \cos(\theta_{j+1}-\theta_j) e^{\frac {2(\xi-\tt_j)}{p-1}}+o(1)]\label{kexp}
 \end{align}
 as $s\to \infty$. In particular, using \eqref{mehdi} and the fact that $-1\le \cos
(\theta_{j+1}-\theta_j) \le - \frac 7{16}$, it holds that
 \begin{align*}
   \ku(\tt_j-A,s) & = 2^{-\frac 2{p-1}} e^{-\frac 1{p-1}(\zeta_{j+1}-\zeta_j)}[e^{\frac {2A}{p-1}}+ \cos(\theta_{j+1}-\theta_j) e^{-\frac {2A}{p-1}}+o(1)]>0,\\
\ku(\tt_j+A,s) & = 2^{-\frac 2{p-1}} e^{-\frac 1{p-1}(\zeta_{j+1}-\zeta_j)}[e^{-\frac {2A}{p-1}}+ \cos(\theta_{j+1}-\theta_j) e^{\frac {2A}{p-1}}+o(1)]<0,
\end{align*}
as $s\to \infty$. Therefore, there exists  $\check \xi_j(s) \in
(\tt_j-A,\tt_j+A)$ such that $\ku(\check\xi_j(s),s)=0$. Since
\eqref{kexp} holds also after differentiation, we easily see that
\begin{align*}
  \partial_\xi \ku(\xi,s)
  & \le  \frac 2{p-1} 2^{-\frac 2{p-1}}
                                e^{-\frac
                                1{p-1}(\zeta_{j+1}-\zeta_j)}[-e^{-\frac
                                {2A}{p-1}}+
                                \cos(\theta_{j+1}-\theta_j) e^{-\frac
    {2A}{p-1}}+o(1)]\\
  & \le -  \frac 2{p-1} 2^{-\frac 2{p-1}}
                                e^{-\frac
                                1{p-1}(\zeta_{j+1}-\zeta_j)}e^{-\frac
                                {2A}{p-1}} (1+\frac 7{16})/2<0
\end{align*}
for $s$ large enough. Accordingly,
\[
|\kz(\xi,s)|^{p-2} = |\kz(\xi,s)-\kz(\check
\xi_j(s),s)|^{p-2} \le C(A)|\xi-\check \xi_j(s)|^{p-2} e^{-\frac{p-2}{p-1}(\zeta_{j+1}-\zeta_j)}.
\]
 Therefore, since $\cosh^{-\frac 2{p-1}}(\xi-\zeta_j) \le C(A) \cosh^{-\frac 2{p-1}}(\tt_j-\zeta_j)\le C(A) e^{-\frac{\zeta_{j+1} - \zeta_j}{p-1}}$, 
 it follows that
\begin{eqnarray*}
\int_{\tt_j-A}^{\tt_j+A} \cosh^{-\frac 2{p-1}}(\xi-\zeta_j)|
  \kz(\xi,s)|^{p-2} d\xi &\le& C(A) e^{-(\zeta_{j+1}-
                             \zeta_j)}\int_{\mu_j-A}^{\mu_j+A}|\xi-\check
                             \xi_j(s)|^{p-2} d\xi\\
& \le&  C(A) e^{-(\zeta_{j+1}- \zeta_j)}
\end{eqnarray*}
because $|\check \xi_j(s) -\tt_j(s)|\le A$. This concludes the proof
of Lemma \ref{lemjis}.

\end{proof}

\medskip

- \textit{Estimate of $\check \pi_\lambda^{\bb_l(s)}\begin{pmatrix}
    \kappa(d_j(s),y)\\0 \end{pmatrix}$ for $j\neq l$}.
By definitions \eqref{barpi} and \eqref{defphi} of $\check
\pi_\lambda$ and the inner product $\phi$, we get after an
integration by parts
\[
\check \pi_\lambda^{\bb_l(s)}\begin{pmatrix}
    \kappa(d_j(s),y)\\0 \end{pmatrix} = \iint \kappa(d_j,y)(-\mathcal{L}\check  W_{0,1}(d_l,y)+\check W_{0,1}(d_l,y)) \rho(y) dy.
\]
Since we have from the definitions \eqref{defk} and \eqref{defwl2} of
$\kappa(d_l,y)$ and $\check W_{\lambda,2}(d_l,y)$
together with \eqref{elliptic} that
$$|\check W_{0,2}(d_l,y)|+|\mathcal{L}\check W_{0,1}(d_l,y)-\check
W_{0,1}(d_l,y)| \le C \frac{\kappa(d_l,y)}{1-y^2},$$
using \eqref{adam}, we get
\begin{equation}\label{giada}
\left|\check \pi_\lambda^{\bb_l(s)}\begin{pmatrix} \kappa(d_j(s),y)\\0 \end{pmatrix}\right|\le C\int_{-1}^1\kappa(d_j,y) \frac{\kappa(d_l,y)}{1-y^2} \rho(y)
    dy \le C \bar J(s)
\end{equation}
defined in \eqref{defjb}.

\medskip

Compiling all the previous information for $\lambda=0$ and $\lambda=1$, and recalling that
$\left(\check \alpha_0^l\right)'(s)\equiv 0$ by \eqref{zero}, we obtain 
\begin{eqnarray}\label{n}
 \left|\left(\check\alpha_1^l\right)'-\check\alpha_1^l\right| + |\zeta_l'|\le C|\zeta_l'|\cdot\|q\|_\mathcal{H}+ C J+C \|q\|^2_{\mathcal{ H}}+ C|\theta_l'|\cdot\|q\|_{\mathcal{ H}}+ C \bar J \sum_{j\neq l}\left(|\zeta'_j|+|\theta_j'|\right),
\end{eqnarray}
where $\bar J$ and $J$ are defined in \eqref{defjb} and \eqref{defJ}.

\bigskip

{\bf Step 2: Projection of equation (\ref{170''}) with the projector
  $\tilde \pi_0^{d_l(s)}$}\\
 Projecting equation (\ref{170''}) with the projector $\tilde \pi_0^{\bb_l(s)}$ (\ref{barpi}), we write
 \begin{align}
   &\tilde \pi_0^{\bb_l(s)}(\partial_s \tilde q_l)=\tilde
               \pi_0^{\bb_l(s)}(\tilde L_d \tilde q_l)+\tilde
               \pi_0^{\bb_l(s)} \begin{pmatrix}0\\ \tilde
                 V_l(y,s,\qlun)\end{pmatrix}
             +\tilde
  \pi_0^{\bb_l(s)}\begin{pmatrix}0\\ \tilde
    G_l(\qlun)\end{pmatrix}+\tilde \pi_0^{\bb_l(s)}\begin{pmatrix}0\\
    \tilde R_l(y,s)\end{pmatrix}\nonumber\\
   &+\theta_l'\tilde
  \pi_0^{\bb_l(s)} \begin{pmatrix} \check \qlun(y,s)\\ \check
    \qldeux(y,s)\end{pmatrix}\label{projstep2}\\
             &+\tilde \pi_0^{\bb_l(s)}\begin{pmatrix}-\theta_l'  \kappa(d_l,y)+\sum_{j\neq l}\left( \cos(\theta_j-\theta_l) \theta_j'(s)\kappa(d_j(s),y)+\sin (\theta_j-\theta_l)d_j'(s)\partial_d \kappa(d_j(s),y)\right)\\0 \end{pmatrix}.\nonumber
\end{align}
From \eqref{zero} and the study in the case of one soliton given in page 5917
of \cite{Atams15}, we have
\begin{align}
  \left|\tilde \pi_0^{\bb_l(s)}(\partial_s \tilde q_l)\right|
  &\le C |\zeta_l'(s)|\cdot \|q(s)\|_{\mathcal{H}},&
  \tilde \pi_0^{\bb_l(s)}(\tilde L_d \tilde q_l)&=0,\label{pascaline}\\
  \left|\tilde \pi_0^{\bb_l(s)} \begin{pmatrix} \check
      \qlun(y,s)\\ \check \qldeux(y,s)\end{pmatrix}\right|
  &\le C\| q(s)\|_{\mathcal{ H}},&
  \tilde \pi_0^{\bb_l(s)}
  \begin{pmatrix} \kappa(d_l)\\0\end{pmatrix} & =1.\label{neveu}
\end{align}
Recalling the definition \eqref{pi} of $\tilde \pi_0^{d_l(s)}$ and the
fact that
\[
  \tilde W_{0,2}(d_l.y) = \tilde c_0 \kappa(d_l,y) \mbox{ and }
  |\mathcal{L}\tilde W_{0,1}(d_l,y)-\tilde W_{0,1}(d_l,y)| \le C \frac{\kappa(d_l,y)}{1-y^2},
\]
where $\tilde c_0>0$ is given in \eqref{W}, the remaining terms in
\eqref{projstep2} can be bounded from \eqref{hassine}, \eqref{defG},
\eqref{defql}, \eqref{nl}, \eqref{rl} and \eqref{adam}. Note in
particular that
\begin{align}
 \left\|\tilde
               \pi_0^{\bb_l(s)} \begin{pmatrix}0\\ \tilde
                 V_l(y,s,\qlun)\end{pmatrix}\right\|_{\q H} \le &\;
             C\|q(s)\|_{\q H}^2+C J^{1+\delta},\;\;
&\left|\tilde\pi_\lambda^{\bb_l(s)} \begin{pmatrix}0\\ \tilde
    G_l(\qlun)\end{pmatrix}\right|\le &\; C \|q(s)\|_{\mathcal{ H}}^2,\nonumber\\
\left|\tilde \pi_\lambda^{d_l(s)}\vc{0}{\tilde
     R_l}\right|\le &\;J,
  &\left|\tilde \pi_0^{\bb_l(s)}\vc{\kappa(d_j(s))}{0}\right|
    \le &\; C\bar J, \label{hassine2}\\
  \left|d_j'(s)\tilde \pi_\lambda^{d_l(s)} \vc{\partial_d \kappa(d_j(s))}{0}\right|
    \le &\; C|\zeta_j'(s)|\cdot \bar J(s)\label{hassine3}
\end{align}
for $j\neq l$ and for some $\delta>0$, where $J$ and $\bar J$ are
defined in \eqref{defJ} and \eqref{defjb}.
Compiling all
this information, we see that
\begin{eqnarray}\label{nnn}
 |\theta_l'|\le C|\zeta_l'\||q\|_\mathcal{H}+ C J+C \|q\|^2_{\mathcal{ H}}+ C|\theta_l'\||q\|_{\mathcal{ H}}+ C \bar J \sum_{j\neq l}(|\theta_j'|+|\zeta'_j|). 
\end{eqnarray}
Introducing $A=\displaystyle\sum_{l=1}^k(|\zeta_l'|+|\theta_l'|+
\left|\left(\check\alpha_1^l\right)'-\check\alpha_1^l\right|)$, then using (\ref{n}) and (\ref{nnn}), we see that
\begin{eqnarray*}
 A\le CA(\|q\|_{\q H}+\bar J)+ C J+C \|q\|^2_{\mathcal{ H}}.
\end{eqnarray*}
Since $\|q\|_{\q H}+\bar J\to 0$ as $s\to \infty$ from \eqref{orth}
and \eqref{defjb}, the conclusion of item (i) of Proposition
\ref{propdyn} follows.\\
(ii) We proceed similarly as in page 635 of \cite{MZajm12} where the
real-valued case is treated. The adaptation goes on smoothly, except
for some delicate estimates related to the complex value of the
solution. As in \cite{MZajm12}, we proceed in 2 steps:\\
- We first project equation \eqref{eqq} with the projector
$\pi_-=\pi_-^{d_1,\theta_1,\dots,d_k,\theta_k}$ defined in
\eqref{decompq0} and write a differential inequality satisfied by
$q_-$ defined in \eqref{defq-}.\\
- Then, we use that inequality to write a differential inequality
satisfied by $\varphi(q_-(s), q_-(s))$ where the bilinear form
$\varphi$ was introduced in \eqref{defvarphi}. 

\bigskip

\textbf{Step 1: A differential inequality satisfied by $q_-(y,s)$}

We claim the following:
\begin{cl}[A partial differential inequality for $q_-$] \label{clrachid} For $s$ large enough, we have
\begin{eqnarray*}
  &&\left\|\partial_s q_--L q_-
     - \sum_{l=1}^k \check \alpha_1^l e^{i\theta_l}
     \vc{0}{\check V_l(y,s,\check F_{1,1})}
     -\vc{0}{f(q_1)}-\vc{0}{R}\right\|_{\H}\\
& \le& C J + C\|q\|_{\H}^2
\end{eqnarray*}
where $J(s)$ is defined in \eqref{defJ}.
\end{cl}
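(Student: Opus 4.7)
The plan is to start from the decomposition \eqref{estq-}, differentiate it in $s$, substitute equation \eqref{eqq} for $\partial_s q$, and exploit the $\m R$-linearity of $L$ to write $Lq = Lq_- + \sum_l \check\alpha_1^l\,L(e^{i\theta_l}\check F_1(d_l))$. The key algebraic identity is
\[
L\bigl(e^{i\theta_l}\check F_1(d_l)\bigr) = e^{i\theta_l}\check F_1(d_l) + \vc{0}{e^{i\theta_l}V_l(y,s,\check F_{1,1}(d_l))},
\]
obtained from the eigenvalue relation $\check L_{d_l}\check F_1(d_l) = \check F_1(d_l)$ combined with the direct verification $\psi(e^{i\theta_l}\xi) - e^{i\theta_l}\check\psi(d_l)\xi = e^{i\theta_l}V_l(y,s,\xi)$ for real $\xi$, based on \eqref{defL}--\eqref{defG} and \eqref{defpsi}.

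Subtracting the quantities prescribed by the claim, the remainder splits into four families of terms: (i) $[\check\alpha_1^l - (\check\alpha_1^l)' - i\theta_l'\check\alpha_1^l] e^{i\theta_l}\check F_1(d_l)$, generated by the eigenvalue $1$ together with the $s$-derivatives of the coefficients $e^{i\theta_l}\check\alpha_1^l$; (ii) the modulation correction $-e^{i\theta_j}(i\theta_j'\kappa(d_j) + d_j'\partial_d\kappa(d_j),0)$ from \eqref{eqq}; (iii) $-e^{i\theta_l}\check\alpha_1^l d_l'\partial_d\check F_1(d_l)$ from the $s$-derivative of $\check F_1(d_l(s))$; and (iv) the complex-valued discrepancy $\sum_l \check\alpha_1^l e^{i\theta_l}(0, V_l(y,s,\check F_{1,1}) - \check V_l(y,s,\check F_{1,1}))$. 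Families (i)--(iii) are controlled in $\H$ by $CJ + C\|q\|_\H^2$ using Proposition \ref{propdyn}(i) together with the uniform bounds $\|\kappa(d_j)\|_\H, \|\check F_1(d_l)\|_\H\le C$, the Lipschitz estimate $\|\partial_d\check F_1(d_l)\|_\H\le C(1-d_l^2)^{-1}$ from \eqref{majoration}, and the identity $d_j' = -\zeta_j'(1-d_j^2)$ from \eqref{defzetaj}, which together yield $\|d_j'\partial_d\kappa(d_j)\|_\H, \|d_l'\partial_d\check F_1(d_l)\|_\H\le C|\zeta_j'|$.

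The main obstacle is family (iv), a genuinely new difficulty absent from the real-valued analysis of \cite{MZajm12}. A direct calculation shows
\[
V_l(y,s,\check F_{1,1}) - \check V_l(y,s,\check F_{1,1}) = i(p-1)|K|^{p-3}\,\Re(e^{-i\theta_l}K)\,\Im(e^{-i\theta_l}K)\,\check F_{1,1}(d_l),
\]
which is purely imaginary. Using $\Im(e^{-i\theta_l}K) = \sum_{j\neq l}\sin(\theta_j-\theta_l)\kappa(d_j)$, this error is pointwise dominated by $C|K|^{p-2}\kappa(d_l)\sum_{j\neq l}\kappa(d_j)$, which has exactly the structure of $\bar V(y,s)$ in \eqref{defvb}. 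Since $|\check\alpha_1^l|\le C\|q\|_\H$ is bounded, multiplying by $|\check\alpha_1^l|$ and integrating against $\rho$ produces an $O(J)$ contribution, thanks to the decoupling \eqref{separation}-\eqref{orth} and a case split $p\ge 2$ vs $p<2$ analogous to the one carried out in the proof of Lemma \ref{lemjis} to handle the factor $|K|^{p-2}$. Summing over $l$ and combining with (i)--(iii) delivers the desired bound $CJ + C\|q\|_\H^2$.
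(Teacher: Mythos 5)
Your derivation mirrors the paper's: equation \eqref{ihp1} in the paper is precisely your step of differentiating \eqref{estq-}, and the identity you isolate as ``the key algebraic identity'' is the paper's \eqref{bouzid} with $\lambda=1$. Your families (i)--(iii) correspond to the terms the paper controls via \eqref{rachid1}, \eqref{rachid5}, \eqref{rachid6}, and your approach is arguably more streamlined in that it avoids explicitly applying $\pi_-$ to the equation and comparing $\pi_-(\partial_s q)$ with $\partial_s q_-$. The one genuinely new thing in your write-up is family (iv). Here you have caught something the paper does not explicitly address: the identity \eqref{bouzid} produces the full complex $V_l$, so the paper's own estimate \eqref{rachid2} carries $V_l(y,s,\check F_{1,1})$ rather than the $\check V_l(y,s,\check F_{1,1})$ appearing in the Claim's statement, and the text never accounts for the passage from one to the other. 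Your computation of the discrepancy $V_l-\check V_l = i(p-1)|K|^{p-3}\Re(e^{-i\theta_l}K)\Im(e^{-i\theta_l}K)\check F_{1,1}$ is correct, and the idea of using $\Im(e^{-i\theta_l}K)=\sum_{j\neq l}\sin(\theta_j-\theta_l)\kappa(d_j)$ together with the soliton decoupling is the right one. Two cautions, though: the quantity to control is $(\iint|V_l-\check V_l|^2\rho\,dy)^{1/2}$ (an $L^2_\rho$ norm, since this sits in the second component of a vector measured in $\cH$), whereas Lemma \ref{lemjis} and the estimates around $\bar V$ in the paper are formulated as weighted $L^1$ bounds against $\kappa(d_l)\rho$; the argument adapts (the square produces $e^{-\frac{4}{p-1}|\zeta_l-\zeta_j|}$-type decay, and the $|K|^{p-2}$ singularity for $p<2$ can be tamed as in Lemma \ref{lemjis}), but as written you are quoting the wrong type of estimate. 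Also note that bounding $|\check\alpha_1^l|$ by a constant and then the integral by a constant would only give $O(\|q\|_\cH)$, which is not enough; you need the integral itself to produce the $O(J)$ smallness, which is what the decoupling gives. Overall, a sound proof that is slightly more careful than the paper's on the $V_l$ versus $\check V_l$ point, with the one soft spot being the $L^2_\rho$ control of the $|K|^{p-2}$ factor.
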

\begin{proof}
The proof follows the scheme of the real-valued case given in Claim
C.2 page 635 in \cite{MZajm12} and the following. However, due to the
complex structure, some estimates become tricky. For that reason, we
give details in the following.

\medskip

Applying the operator $\pi_-$ defined in \eqref{decompq0} to equation
\eqref{eqq}, we write
\begin{align}
\pi_-\left(\partial_s q\right)
&
    \;= \pi_-\left(L q\right)
+  \pi_-\begin{pmatrix}0\\ f(q_1)\end{pmatrix}
  +\pi_-\begin{pmatrix}0\\ R(y,s)\end{pmatrix}\label{rachid}\\
 &
   -\sum_{j=1}^k
   \pi_-\left[\begin{pmatrix}
e^{i\theta_j}(i\theta_j'(s)\kappa(d_j(s))+d_j'(s)\partial_d
  \kappa(d_j(s)))\\0 \end{pmatrix}\right],\nonumber
\end{align}
In what follows,
we handle each term appearing in this identity.

\medskip

- \textit{Estimate of $\pi_-\left(\partial_s q\right)$}: 
Differentiating \eqref{estq-} then using the decomposition
\eqref{decompq0} with $\partial_s q$ instead of $q$, we write two
different expressions for $\partial_s q$ in the following:
\begin{align}
  \partial_s q(y,s) =&
 \sum_{l=1}^k e^{i\theta_l} \left\{\left[i\theta_l'\check \alpha_1^l
                       +\left(\check\alpha_1^l\right)'\right] \check F_1(d_l,y)
+\check\alpha_1^ld_l'\partial_d \check F_1(d_l,y)\right\}
  +\partial_s q_-(y,s),\label{ihp1}\\
  \partial_s q(y,s) = &\sum_{l=1}^k e^{i\theta_l}\left\{\check
  \pi_1^{d_l}[\Re(e^{-i\theta_l}\partial_sq)] \check F_1(d_l,y) + \check
  \pi_0^{d_l}[\Re(e^{-i\theta_l}\partial_s q)] \check F_0(d_l,y) + i\tilde
  \pi_0^{d_l}[\Im(e^{-i\theta_l}\partial_s q)] \tilde F_0(d_l,y) \right\}\nonumber\\
&+\pi_-(\partial_s q)(y,s). \label{ihp2}
\end{align}
Let us note by definition \eqref{defql} of $q_l$ that $\partial_s q_l
= e^{-i\theta_l}(-i\theta_l'q+\partial_s q)$, hence, by the
decomposition into real and imaginary parts given in \eqref{riql},
together with \eqref{boris}, we have for $\lambda=0$ or $1$, 
\begin{equation}\label{david1}
  \left|\check \pi_\lambda^{d_l}[\Re(e^{-i\theta_l}\partial_sq)]
    - \check \pi_\lambda^{d_l}[\partial_s \check q_l]\right|
  \le C|\theta_l'|\|q\|_{\q H}.
\end{equation}
Similarly, it holds that
\begin{equation}\label{david2}
  \left|\tilde \pi_0^{d_l}[\Im (e^{-i\theta_l}\partial_sq)]
    - \tilde \pi_0^{d_l}[\partial_s \tilde q_l]\right|
  \le  C|\theta_l'|\|q\|_{\q H}.
\end{equation}
Making the difference between these two expressions in \eqref{ihp1}
and \eqref{ihp2}, then using \eqref{majoration}, \eqref{david1},
\eqref{zero}, \eqref{defzetaj}, \eqref{david2}, \eqref{pascaline}, we see that
\[
  \|\pi_-(\partial_s q) - \partial_s q_-\|_{\q H} \le
 C\sum_{l=1}^k|\check\alpha_1^l|[|\theta_l'|+|\zeta_l'|]
 + C\|q\|_{\q H}\sum_{l=1}^k|\theta_l'|+|\zeta_l'|.
\]
Since $|\check \alpha_1^l|\le C\|q\|_{\q H}$ by definition
\eqref{defa1l} and by the same argument as for \eqref{boris}, using
item (i) of Proposition \ref{propdyn}, we see that
\begin{equation}\label{rachid1}
\|\pi_-(\partial_s q) - \partial_s q_-\|_{\q H} \le C J\|q\|_{\q H}+C\|q\|_{\q H}^3.
\end{equation}

- \textit{Estimate of $\pi_-(Lq)$}: Applying the operator $L$ to
\eqref{estq-} on the one hand, and using \eqref{decompq0} with $L q$
instead of $q$ on the other hand, we write
\begin{align}
  L q=& \;\sum_{l=1}^k \check \alpha_1^l L ( e^{i\theta_l}\check F_1(d_l,y))
         + L q_-\nonumber\\
 L q = &\sum_{l=1}^k e^{i\theta_l}\left\{\check
  \pi_1^{d_l}[\Re(e^{-i\theta_l}L q)] \check F_1(d_l,y) + \check
  \pi_0^{d_l}[\Re(e^{-i\theta_l}L q)] \check F_0(d_l,y) + i\tilde
  \pi_0^{d_l}[\Im(e^{-i\theta_l}L q)] \tilde F_0(d_l,y) \right\}\nonumber\\
&+\pi_-(L q). \nonumber
\end{align}
Therefore, recalling that $\check \alpha_0^l=0$ by \eqref{zero}, we write
\begin{align}
  \pi_-(L q) - L q_- =&\; \sum_{\lambda=0}^1\sum_{l=1}^k 
  [\check \alpha_\lambda^l L(e^{i\theta_l} \check F_\lambda(d_l,y))
  - e^{i\theta_l}\check
  \pi_\lambda^{d_l}[\Re(e^{-i\theta_l}L q)] \check F_\lambda(d_l,y) ]\nonumber\\
 &\; -\sum_{l=1}^k ie^{i\theta_l} \tilde
  \pi_0^{d_l}[\Im(e^{-i\theta_l}L q)] \tilde F_0(d_l,y).\label{mahmoud}
\end{align}
Note first by definition of $L$, $L_{d_l}$ and $V_l$ given in \eqref{defL} and
\eqref{defG} that
\begin{equation}\label{Lq}
Lq=L(e^{i\theta_l} q^l)=e^{i\theta_l}L_{d_l}(q^l)+e^{i\theta_l}\vc{0}{V_l(y,s,q^l_1)},
\end{equation}
where $q^l=e^{-i\theta_l}q$ from \eqref{defql}. Therefore, by
definitions \eqref{barL_bb } and \eqref{tildeL_bb } of $\check
L_{d_l}$ and $\tilde L_{d_l}$, we write
\[
\Re(e^{-i\theta_l}Lq)= \check L_{d_l}(\check q_l)+\vc{0}{\check
  V_l(y,s,q^l_1)}
\mbox{ and }
\Im(e^{-i\theta_l}Lq)= \tilde L_{d_l}(\tilde q_l)+\vc{0}{\tilde
  V_l(y,s,q^l_1)}.
\]
In particular, for $\lambda=0$ or $1$, we derive from \eqref{pld1},
\eqref{pld2} and \eqref{defa1l} that
\begin{equation}\label{hela}
  \check \pi_\lambda^{d_l}[\Re(e^{-i\theta_l}Lq)]=
 \check \pi_\lambda^{d_l}[\check L_{d_l}(\check q_l)]+
 \check \pi_\lambda^{d_l}\vc{0}{\check V_l(y,s,q^l_1)}
= \lambda \check \alpha_\lambda^l+
 \check \pi_\lambda^{d_l}\vc{0}{\check V_l(y,s,q^l_1)}
\end{equation}
and
\begin{equation}\label{hela2}
  \tilde \pi_0^{d_l}[\Im(e^{-i\theta_l}Lq)]= 
  \tilde \pi_0^{d_l}[\tilde L_{d_l}(\tilde q_l)]+
  \tilde \pi_0^{d_l}\vc{0}{\tilde V_l(y,s,q^l_1)}
  =\tilde \pi_0^{d_l}\vc{0}{\tilde V_l(y,s,q^l_1)}.
\end{equation}
Using \eqref{Lq} with $q^l$ replaced by $\check F_\lambda$, which is
an eigenfunction for $\check L_{d_l}$ as mentioned right before \eqref{110}, we write
\begin{align}
L (e^{i\theta_l}\check F_\lambda(d_l,y)) 
  =& \; e^{i\theta_l}\check L_{d_l}\check F_\lambda(d_l,y)
     + e^{i\theta_l} \vc{0}{V_l(y,s,\check F_{\lambda,1}(d_l,y))}\nonumber\\
  =& \;\lambda e^{i\theta_l} \check F_\lambda(d_l,y)
 + e^{i\theta_l} \vc{0}{V_l(y,s,\check F_{\lambda,1}(d_l,y))}.\label{bouzid}
\end{align}
Using \eqref{mahmoud}, \eqref{bouzid}, \eqref{zero}, \eqref{hela},
\eqref{hassine}, \eqref{majoration}, \eqref{hela2}, \eqref{hassine2}
and \eqref{majoration1},  we get
\begin{equation}\label{rachid2}
 \left \|\pi_-(Lq) - Lq_-
  -\sum_{l=0}^k \check \alpha_1^l e^{i\theta_l}\vc{0}{V_l(y,s,\check
    F_{1,1})}\right\|_{\q H}  \le C \|q\|_{\H}^2 + CJ^{1+\delta}
\end{equation}
for some small $\delta>0$, where $J$ is defined in \eqref{defJ}.

 \medskip

- \textit{Estimate of $\pi_-\vc{0}{f(q_1)}$}: By definition
\eqref{decompq0} of the operator $\pi_-$, together with \eqref{defG}, \eqref{defql},
\eqref{majoration} and \eqref{majoration1}, \eqref{nl1} and
\eqref{hassine2}, we have
\begin{equation}\label{rachid3}
  \left\|\pi_-\vc{0}{f(q_1)}- \vc{0}{f(q_1)}\right\|_{\H} \le
C\sum_{\lambda=0}^1\sum_{l=1}^ k
 \left|\check\pi^{d_l(s)}_\lambda\vc{0}{\check G_l(q^l_1)}\right|
   + C\sum_{l=1}^ k \left|\tilde\pi^{d_l(s)}_\lambda\vc{0}{\tilde G_l(q^l_1)}\right|                                     \le C\|q(s)\|_{\H}^2.
\end{equation}

- \textit{Estimate of $\pi_-\vc{0}{R}$}: Similarly, using \eqref{rl}
and \eqref{hassine2}, we write
\begin{equation}\label{rachid4}
  \left\|\pi_-\vc{0}{R}- \vc{0}{R}\right\|_{\H}
  \le C\sum_{\lambda=0}^1\sum_{l=1}^ k
  \left|\check\pi^{d_l(s)}_\lambda\vc{0}{\check R_l}\right|
+C\sum_{l=1}^k\left|\tilde \pi_\lambda^{d_l(s)}\vc{0}{\tilde R_l}\right|
  \le CJ(s).
\end{equation}

- \textit{Estimate of $\pi_-\vc{ie^{i\theta_j}\kappa(d_j,y)}{0}$ where
  $j=1,\dots,k$}: By definition \eqref{decompq0} of $\pi_-$, we write
\begin{align*}
\pi_-\vc{ie^{i\theta_j}\kappa(d_j,y)}{0} =&
\vc{ie^{i\theta_j}\kappa(d_j,y)}{0}
- \sum_{l=1}^k\sum_{\lambda=0}^1 e^{i\theta_l}\check
  \pi_\lambda^{d_l}\left[\Re \vc{ie^{i(\theta_j-\theta_l)}\kappa(d_j,y)}{0}\right]
  \check F_\lambda(d_l,y)\\
&  - \sum_{l=1}^k e^{i\theta_l} i\tilde
  \pi_0^{d_l}\left[\Im \vc{ie^{i(\theta_j-\theta_l)}\kappa(d_j,y)}{0}\right] \tilde F_0(d_l,y).
\end{align*}
The only relevant term in the sums in the right-hand side is from the
last sum when $l=j$, in the sense that we write the following from
\eqref{F} and \eqref{pascaline}
\[
e^{i\theta_j} i\tilde
  \pi_0^{d_j}\left[\Im \vc{i\kappa(d_j,y)}{0}\right] \tilde
  F_0(d_j,y)=
  i e^{i\theta_j} \tilde \pi_0^{d_j}\vc{\kappa(d_j,y)}{0}
  =\vc{ie^{i\theta_j}\kappa(d_j,y)}{0}.
\]
Note that this term cancels with the first on the right-hand side.
All the other projections in those sums are small, thanks to
\eqref{giada} and \eqref{hassine2}. Using the bounds
\eqref{majoration} and \eqref{majoration1} on the norms of the
eigenfunctions, we see that
\begin{equation}\label{rachid5}
\left\|\pi_-\vc{ie^{i\theta_j}\kappa(d_j,y)}{0} \right\|_{\q H} \le
C\bar J
\end{equation}
defined in \eqref{defjb}.

\medskip

- \textit{Estimate of $\pi_-\vc{e^{i\theta_j}\partial_d \kappa(d_j,y)}{0}$ where
  $j=1,\dots,k$}: By definition \eqref{decompq0} of $\pi_-$, we write
\begin{align*}
\pi_-\vc{e^{i\theta_j}\partial_d \kappa(d_j,y)}{0} =&
\vc{e^{i\theta_j}\partial_d \kappa(d_j,y)}{0}
- \sum_{l=1}^k\sum_{\lambda=0}^1 e^{i\theta_l}\check
  \pi_\lambda^{d_l}\left[\Re \vc{e^{i(\theta_j-\theta_l)}\partial_d \kappa(d_j,y)}{0}\right]
  \check F_\lambda(d_l,y)\\
 &  - \sum_{l=1}^k e^{i\theta_l} i\tilde
  \pi_0^{d_l}\left[\Im \vc{e^{i(\theta_j-\theta_l)}\partial_d\kappa(d_j,y)}{0}\right] \tilde F_0(d_l,y).
\end{align*}
Proceeding as with the previous terms and using in particular
\eqref{adel}, \eqref{adam} and \eqref{hassine3}, we write
\begin{equation}\label{rachid6}
  \left\| \pi_-\vc{e^{i\theta_j}\partial_d \kappa(d_j,y)}{0} \right\|_{\q H} \le
C\bar J.
\end{equation}
Starting from \eqref{rachid} then using the estimates in
\eqref{rachid1}, \eqref{rachid2} and \eqref{rachid3} and
\eqref{rachid4}, \eqref{rachid5}, together with item (i) of
Proposition \ref{propdyn}, we conclude the proof of Claim
\ref{clrachid} (remember that $\|q(s)\|_{\q H}$, $J(s)$ and $\bar J(s)$ are bounded
by \eqref{orth}, \eqref{defJ} and \eqref{defjb}).

\end{proof}

\textbf{Step 2: A differential inequality satisfied by
  $A_-(s)$}

By definitions \eqref{defa-} and \eqref{defvarphi} of $A_-$ and $\varphi$, it holds that 
\begin{equation}\label{jazz}
A_-'(s) =2 \varphi(\ps q_-, q_-) -(p-1)[I_1+(p-3)I_2+2I_3]
\end{equation}
where
\begin{align*}
  I_1 =&\;\iint \Re(\bar K \ps K)|K|^{p-3}|q_{-,1}|^2\rho dy,\;\;
  I_2 = \iint \Re(\bar K \ps K)|K|^{p-5}(\Re(\bar Kq_{-,1}))^2\rho dy,\\
  I_3 = &\;\iint |K|^{p-3}\Re(\bar K q_{-,1}) \Re (\partial_s \bar K
  q_{-,1})\rho dy, 
\end{align*}
and 
$K(y,s)$ is defined in \eqref{defK}. Since we have by definition
\eqref{defk},
$|\partial_d \kappa(d,y)|\le C\frac{\kappa(d,y)}{1-d^2}$,
using \eqref{defK} together with item
(i) of Proposition \ref{propdyn} and \eqref{defzetaj} that
\begin{align}
  |\ps K(y,s)|\le&\; C\sum_{l=1}^k |\theta_l'(s)|\kappa(d_l(s),y)
  + C\sum_{l=1}^k |d_l'(s)|\cdot |\partial_d\kappa(d_l(s),y)|\nonumber\\
  \le&\; C\sum_{l=1}^k (|\theta_l'(s)|+|\zeta_l'(s)|)\kappa(d_l(s),y)
  \le C(\|q(s)\|_{\q H}^2+J(s)) \sum_{l=1}^k \kappa(d_l(s),y),\label{familial}
\end{align}
where $J(s)$ is introduced in \eqref{defJ}. Therefore, for $i=1$, $2$
or $3$, we write
\begin{equation}\label{rougui}
  |I_i| \le C (\|q(s)\|_{\q H}^2+J(s)) \|q_{-,1}(1-y^2)^{\frac
    1{p-1}}\|_{L^\infty}^2\sum_{l=1}^k J_l(s)
\end{equation}
where $J_l(s)$ is defined in \eqref{defjis}. Recalling the following
embedding from Lemma 2.2 page 51 of \cite{MZjfa07},
\begin{equation}\label{embed}
  \forall v\in \q H_0,\;\; \|v\|_{L^2_{\frac \rho{1-y^2}}} +
  \|v(1-y^2)^{\frac 1{p-1}}\|_{L^\infty}\le C\|v\|_{\q H_0}
\end{equation}
where the space $\q H_0$ is defined in \eqref{10}, we derive the
following bound
\begin{equation}\label{cq-}
  \|q_-\|_{\q H}
  \le C\|q\|_{\q H} + C \sum_{l=1}^k |\check \alpha_1^l|
  \le C\|q\|_{\q H},
\end{equation}
from \eqref{estq-}, \eqref{majoration}, \eqref{defa1l}, \eqref{barpi},
\eqref{majoration'} and the Cauchy-Schwarz inequality applied to the
inner product \eqref{defphi}. Using \eqref{jazz}, \eqref{rougui},
Claim \ref{lemjis}, \eqref{embed}, the definitions of the spaces $\q
H$ \eqref{defnh} and $\q H_0$ \eqref{10} and \eqref{cq-}, we see that 
\begin{equation}\label{zz1}
\left|A_-'(s)- 2\varphi(\ps q_-, q_-)\right|
\le C\|q\|_{\H}^2\left(\|q\|_{\H}^2 + J\right).
\end{equation}
This way, the question reduces to the evaluation of $\varphi(\ps q_-,
q_-)$. Using Claim \ref{clrachid} together with
Lemma \ref{lemequiv} and \eqref{cq-},
we write the following:
\begin{align}
&\left|\varphi(\ps q_-, q_-) - \varphi(L q_-, q_-) -
\Re \iint \bar q_{-,2} f(q_1) \rho dy
-\Re \iint \bar q_{-,2} G \rho dy\right|\label{zz2}\\
\le&\; C\|q_-\|_{\H}\left(J + \|q\|_{\H}^2\right)
      \le CJ\sqrt{|A_-|} +C J\bar J\|q\|_{\H} + C\|q\|_{\H}^3. \nonumber
\end{align}
where
\begin{equation*}
G(y,s) = \sum_{l=1}^k \check\alpha_1^l(s)e^{i\theta_l(s)}\check V_l(y,s, \check F_{1,1}(d_l(s),y)) +R(y,s).
\end{equation*}

In the following part of the proof, we estimate every term of \eqref{zz2} in order to
finish the proof of item (ii) of Proposition \ref{propdyn}.
\medskip

- \textit{Estimate of $\varphi(L q_-, q_-)$}: By definition
\eqref{defvarphi} and \eqref{defL} of the bilinear form $\varphi$ and
the operator $L$, we write
\begin{align*}
  \varphi(L q_-, q_-) = &\;\Re \iint \left(\mathcal{L}q_{-,1}
+\psi(q_{-,1})-\frac{p+3}{p-1} q_{-,2}- 2 y \partial_y q_{-,2} \right) \bar
  q_{-,2} \rho dy\\
 &+ \Re \iint \left(\partial_y q_{-,2} \partial_y\bar
  q_{-,1}(1-y^2)
   +\left(\frac{2(p+1)}{(p-1)^2}-|K|^{p-1}\right)q_{-,2}
   \bar q_{-,1}  \right)\rho dy\nonumber\\
  &-(p-1)\iint |K|^{p-3}\Re(\bar K q_{-,2})\Re(K \bar q_{-,1}) \rho dy 
\end{align*}
where $\psi(q_{-,1})$ and $K(y,s)$ are given in \eqref{defL} and
\eqref{defK}. Using integration by parts as in page 107 of
\cite{MZjfa07}, based in particular on the operator $\q L$ defined in
\eqref{8}, we see that 
\begin{align*}
  \varphi(L q_-, q_-) =&
 +(p-1)\Re \iint|K|^{p-3}K(K\cdot q_{-,1})  \bar q_{-,2} \rho dy
 -(p-1)\iint |K|^{p-3}\Re(\bar K q_{-,2})\Re(K \bar q_{-,1}) \rho dy\\
& -\frac 4{p-1} \iint |q_{-,2}|^2 \frac \rho{1-y^2} dy
\end{align*}
where $\cdot$ refers to the inner product in $\m R^2$. Since $K\cdot
q_{-,1}= \Re(K \bar q_{-,1})$, it follows that 
 \begin{equation}\label{zz3}
\varphi(L q_-, q_-) = -\frac 4{p-1} \iint |q_{-,2}|^2 \frac \rho{1-y^2} dy.
\end{equation}

- \textit{Estimate of  $\Re \iint \bar q_{-,2} f(q_1) \rho dy$}: Since
we have from the definitions \eqref{110} and \eqref{defk} of $\check
F_1(d,y)$ and $\kappa(d,y)$, 
\begin{equation*}
|\check F_{1,1}(d,y)|\le C \kappa(d,y),
\end{equation*}
using \eqref{estq-}, \eqref{cq-} and \eqref{nl1}, we write
\begin{equation}\label{moncef}
\left|\Re\iint \bar q_{-,2} f(q_1) \rho dy- \Re\iint \bar q_2 f(q_1) \rho dy\right|
\le C\sum_{l=1}^k |\check\alpha_1^l| \iint \kappa(d_l) |f(q_1)| \rho dy
\le C\|q\|_{\H}^3.
\end{equation}
Since $q_2 = \partial_s q_1 + \displaystyle\sum_{l=1}^k
e^{i\theta_l}(i\theta_l'\kappa(d_l)+d_l'\partial_d\kappa(d_l))$ by the
equation \eqref{defq} satisfied by $q(y,s)$, using \eqref{familial}
and \eqref{nl1}, we see that
\begin{align}
 &\;   \left|\Re\iint \bar q_2 f(q_1) \rho dy
    - \Re \iint \partial_s \bar q_1 f(q_1) \rho dy\right|
\le \sum_{l=1}^k
  \iint \left(|\theta_l'|\kappa(d_l)+|d_l'|\partial_d \kappa(d_l)|\right)
 |f(q_1)| \rho dy\nonumber\\
  &\;  \le C(\|q\|_{\q H}^2 +J) \sum_{l=1}^k\iint \kappa(d_l) |f(q_1)| \rho dy
    \le  C \|q\|_{\q H}^2  (\|q\|_{\q H}^2 +J). \label{moncef2}
\end{align}
In order to estimate $\Re \iint \partial_s \bar q_1 f(q_1) \rho dy$,
as we have already announced in the remark following Proposition 
\ref{propdyn}, here comes a key issue in this work: the nonlinear term $f(\xi)$
\eqref{defL} is a gradient, when $\m C$ is identified with $\m
R^2$. More precisely, introducing
\[
  \F(K,\xi) = \frac{|K+\xi|^{p+1}}{p+1} - \frac{|K|^{p+1}}{p+1}
  -|K|^{p-1}(K\cdot\xi)
  - \frac{|K|^{p-1}}2|\xi|^2 -\frac{(p-1)}2|K|^{p-3}(K\cdot \xi)^2
\]
we see after differentiation (in the $\m R^2$ sense) that
\[
\nabla_\xi\F(K,\xi) = f(\xi).
\]
Introducing
$R_- = \iint \F(q_1) \rho dy$,
  then assuming formally
that $K(y,s)$ \eqref{defK} does not depend on $s$, we see by a direct
differentiation that ``$R_-'=\Re \iint \partial_s \bar q_1 f(q_1)
\rho dy$''. Since $K(y,s)$ does depend on $s$, we have some additional
terms in that derivative, as we will shortly see. Since
\[
  R_-'= \Re \iint \partial_s \bar q_1 \nabla_\xi\F(K,q_1) \rho dy
+\Re \iint \partial_s \bar K \nabla_K\F(K,q_1) \rho dy\]
and 
\[
\left|\nabla_K\F(K,\xi) -f(\xi)\right|\le C|K|^{p-2}|\xi|^2
\]
from a direct computation, we use
\eqref{familial} to write
\[
  \left| R_-'- \Re \iint \partial_s \bar q_1 f(q_1) \rho dy\right|
  \le C(J+\|q\|_{\q H}^2)\sum_{l=1}^k\iint
  \kappa(d_l(s))[|f(\xi)|+|K|^{p-2}|q_1|^2]\rho dy.
\]
Using \eqref{nl1}, the embedding \eqref{embed} and Lemma \ref{lemjis},
we see that
\[
  \left| R_-'- \Re \iint \partial_s \bar q_1 f(q_1) \rho dy\right|
  \le C\|q\|_{\q H}^2 (J+\|q\|_{\q H}^2).
\]
Combining this with \eqref{moncef} and \eqref{moncef2}, we see that
\begin{equation}\label{zz40}
  \left|\Re \iint \bar q_2 f(q_1) \rho dy- R_-'\right|
  \le C\|q\|_{\q H}^2 (J+\|q\|_{\q H}^2).
\end{equation}
Now we claim that
\begin{equation}\label{boundF}
|\F (K,\xi)|\le C |\xi|^{p+1} + C {\m 1}_{\{p\ge 2\}} |K|^{p-2}|\xi|^3.
\end{equation}
Indeed, if $K=0$, then this is clear. If not, we may introduce $\nu$
and $\zeta$ in $\m C$, so that $K=|K|\nu$, $\xi=|K|\zeta$. Note that 
$|\nu|=1$. If $|\zeta|\le \frac 12$, making a Taylor expansion, we bound
$\F$ by $|K|^{p+1}|\zeta|^3$.
If $|\zeta|\ge \frac 12$, then we bound it by $|K|^{p+1}|\zeta|^{p+1}$.
Putting together both estimates yields \eqref{boundF}. 
Using the same argument as for estimate (C.35) page 639 in
\cite{MZajm12} where the real case is handled, we see that estimate
\eqref{boundr-} holds.
\medskip

- \textit{Estimate of $\Re \iint \bar q_{-,2} G \rho dy$}: Since
$\check V_l(y,s,q_1^l)$ satisfies the bound \eqref{marzouk}, it is
easy to follow the real-valued case given in pages 640 and 641 of
\cite{MZajm12} to derive the following bound:
\begin{equation}\label{zz50}
  \left|\Re \iint \bar q_{-,2} G \rho dy\right| \le
  \frac 1{p-1}  \iint |q_{-,2}|^2 \frac \rho{1-y^2} dy
  + C\check J(s)^2+o(\|q(s)\|_{\q H}^2)
\end{equation}
where $\check J(s)$ is introduced in \eqref{defJ}.

\medskip

Recalling that $\|q(s)\|_{\q H} \to 0$ by \eqref{orth} and $J\bar J
=o(\check J)$ by \eqref{defjb} and \eqref{defJ}, 
we may combine
all the estimates in \eqref{zz1}, \eqref{zz2}, \eqref{zz3},
\eqref{zz40} and \eqref{zz50}
to conclude the proof of item (ii) in Proposition \ref{propdyn} (note
that we have just proved estimate \eqref{boundr-} above).\\
(iii) With some little care to the complex structure, the proof is
quite similar to the real-valued case, in the one-soliton case (see
pages 110-112 in \cite{MZjfa07}) and the multi-soliton case (see page
641 in \cite{MZajm12}). For that reason, the proof is omitted.\\
This concludes the proof of Proposition \ref{propdyn}.
\end{proof}

\subsection{Refined equations on the modulation parameters}
We prove Proposition \ref{propref} here.

\begin{proof}[Proof of Proposition \ref{propref}]
Take $l=1,\dots,k$.
  This statement is in fact a refinement of the proof of item (i) of
  Proposition \ref{propdyn}. More precisely, all we need to do is to
  refine the projections of the term $R_l(y,s)$ shown in
  \eqref{linearise-complexe}.
  Indeed:\\
  - from equations \eqref{projpdl} and \eqref{projstep2}, together
  with all the subsequent estimates in the proof;\\
- from item (i) of Proposition \ref{propdyn} and Proposition
\ref{propsize}, together with the definitions \eqref{defJ} and
\eqref{defjb} of $J$, $\check J$ and $\bar J$, we see that
\begin{equation}\label{elsa}
  \left|\frac{2\kappa_0}{p-1}\zeta_l'(s)
    -\check\pi^{d_l}_0\vc{0}{\check R_l(y,s)}\right| +
 \left|\theta_l'(s)
    -\tilde\pi^{d_l}_0\vc{0}{\tilde R_l(y,s)}\right|
 \le CJ^{1+\delta}  
\end{equation}
  for some small enough $\delta>0$. It remains then to refine the
  projections shown in the previous estimate. The key idea is to
  obtain the following Taylor expansion for $R(y,s)$ defined in
  \eqref{defL}: 
  \begin{lemma}[A Taylor expansion of $R(y,s)$]\label{lemtayR}
    For $s$ large enough, it holds that
\begin{align*}
\Big|R-&\sum_{l=1}^k e^{i\theta_l(s)}\kappa(d_l)^{p-1}
         {\m 1}_{\{y_{l-1}<y<y_l\}}\sum_{j\neq l} \kappa(d_j)
         \left(p\cos(\theta_j-\theta_l)+i
         \sin(\theta_j-\theta_l)\right)\Big|\notag\\&\le C\sum_{l=1}^k
  \kappa(d_l)^{p-\bar p} {\m 1}_{\{y_{l-1}<y<y_l\}}\sum_{j\neq l} \kappa(d_j)^{\bar p},
\end{align*}
where $\bar p=\min(p,2)$, and $y_j(s)$ are defined right after \eqref{defvb}.  
\end{lemma}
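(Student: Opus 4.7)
The plan is to work on each interval $(y_{l-1}, y_l)$, $l=1,\dots,k$, on which the soliton $\kappa(d_l,y)$ dominates the others, and to perform a first-order Taylor expansion of the nonlinearity $F: \m C \to \m C$, $F(z) = |z|^{p-1}z$, around the dominant value $z_0 = e^{i\theta_l}\kappa(d_l,y)$. On such an interval, write
\[
K = z_0 + K'_l, \qquad K'_l = \sum_{j \neq l} e^{i\theta_j}\kappa(d_j).
\]
The first key observation is that, by the definition of the $y_j$'s as midpoints of consecutive $\zeta_j$'s in the variable $\xi$ such that $y = \tanh\xi$, we have $\kappa(d_j,y) \le \kappa(d_l,y)$ for every $j \ne l$ and $y \in (y_{l-1}, y_l)$; in particular $|K'_l| \le (k-1)\kappa(d_l)$.

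Viewing $F$ as a map $\m R^2 \to \m R^2$, a direct computation gives for $z\neq 0$ and $h \in \m C$
\[
DF(z)[h] = |z|^{p-1} h + (p-1)|z|^{p-3} z \, \Re(\bar z\, h).
\]
Applying this at $z_0 = e^{i\theta_l}\kappa(d_l)$ with $h = K'_l$, then using $e^{i\theta_j} = e^{i\theta_l}[\cos(\theta_j-\theta_l)+i\sin(\theta_j-\theta_l)]$ and combining the two contributions, yields
\[
DF(z_0)[K'_l] = e^{i\theta_l}\kappa(d_l)^{p-1}\sum_{j \neq l}\bigl(p\cos(\theta_j-\theta_l) + i\sin(\theta_j-\theta_l)\bigr)\kappa(d_j),
\]
which is precisely the main term in the claimed identity. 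It remains to control both the Taylor remainder of $F$ and the residual $\sum_{j \neq l} e^{i\theta_j}\kappa(d_j)^p$ that appears when we subtract $\sum_{j=1}^k e^{i\theta_j}\kappa(d_j)^p$ from $|K|^{p-1}K$.

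For the residual, on $(y_{l-1},y_l)$ the bound $\kappa(d_j) \le \kappa(d_l)$ gives $\kappa(d_j)^p \le \kappa(d_l)^{p-\bar p}\kappa(d_j)^{\bar p}$, which is of the required form. For the Taylor remainder, I would rely on the pointwise estimate
\[
|F(z_0+h) - F(z_0) - DF(z_0)h| \le C|h|^{\bar p}\bigl(|z_0|+|h|\bigr)^{p-\bar p}, \qquad \bar p = \min(p,2),
\]
together with $|K'_l| \le C\kappa(d_l)$ and $\bigl(\sum_{j\neq l}\kappa(d_j)\bigr)^{\bar p} \le C(k)\sum_{j\neq l}\kappa(d_j)^{\bar p}$ to recover the right-hand side. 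The main technical point is establishing the above remainder bound when $1 < p < 2$, where $F$ is only of class $C^{1,p-1}$ and the classical $C^2$ Taylor estimate is unavailable; in that range one uses the H\"older continuity of $DF$ with exponent $p-1$ together with homogeneity, along exactly the same lines as the corresponding real-valued estimate in \cite{MZajm12}, which I would invoke rather than reprove.
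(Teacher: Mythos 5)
Your proposal is correct and is exactly the adaptation of the real-valued argument from \cite{MZajm12} that the paper invokes without writing out: localize to $(y_{l-1},y_l)$ where $\kappa(d_l)$ dominates, Taylor-expand $F(z)=|z|^{p-1}z$ (viewed as a map $\m R^2\to\m R^2$) around $z_0=e^{i\theta_l}\kappa(d_l)$, and absorb both the Taylor remainder and the leftover pure powers $\sum_{j\ne l}e^{i\theta_j}\kappa(d_j)^p$ into the $\kappa(d_l)^{p-\bar p}\kappa(d_j)^{\bar p}$ bound. The computation of $DF(z_0)[K'_l]$ producing the $p\cos(\theta_j-\theta_l)+i\sin(\theta_j-\theta_l)$ factor is the only genuinely new ingredient compared to the real case, and you have it right.
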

\begin{proof}
  This is a simple Taylor expansion.
  The proof is omitted, since it is the same as in the real-valued
  case stated in (3.99) page 610 in \cite{MZajm12}. 
\end{proof}
With this statement and the integral table given in Lemma \ref{E}, one
can proceed as in pages 610 and 611 in \cite{MZajm12} to find an
equivalent of the projections shown in \eqref{elsa} and finish the proof.
Note that for the exact values of $A$
and $B$ follows from \eqref{elsa} together with the definitions
\eqref{barpi} and \eqref{pi} of the projections and Lemma \ref{E}:
$A= \frac{p(p-1)\check c_0 \check c_2}{2\kappa_0^2}$ and $B=\tilde c_0
\tilde c_2$, where $\kappa_0$, $\check c_0$, $\tilde c_0$, $\check
c_2$ and $\tilde c_2$ are given in \eqref{defk}, \eqref{defcl},
\eqref{W} and Lemma \ref{E}. Since $\partial_y \rho = - \frac 4{p-1}y
\frac \rho{1-y^2}$ by definition \eqref{defro} of $\rho$, using an
integration by parts, we write
\[
  \frac 1{\check c_0} = \frac 4{p-1}\iint \frac{y^2}{1-y^2}\rho dy
  = -\iint y \partial_y \rho dy = \iint \rho dy, 
\]
hence,
\begin{align*}
  \frac 1{\tilde c_0}& = \frac{4\kappa_0^2}{p-1} \iint \frac \rho{1-y^2} dy
  = \frac{4\kappa_0^2}{p-1}\iint \frac{1-y^2+y^2}{1-y^2}\rho dy
  =  \frac{4\kappa_0^2}{p-1} \left(1+\frac{p-1}4\right)\iint \rho dy\\
&  = \frac{p+3}{p-1}\kappa_0^2\iint \rho dy.
\end{align*}
This concludes the proof of Proposition \ref{propref}.

\end{proof}

\section{Asymptotically real-valued behavior in the characteristic case}\label{SecToda}
This section is devoted to the analysis of the  complex-valued first
order Toda system obtained in Proposition \ref{propref}, which we
recall below in \eqref{toda1}-\eqref{toda2}. Accordingly, we assume
throughout the section that
\[
  k\ge 2.
\]
This part is the main novelty of this paper, since it can by no
means be reduced to the real-valued case. As a matter of fact, our aim
is to show that the complex-valued case behaves like the real-valued
case (see Proposition \ref{prop:edo-main} below). However, such a result needs
some hard ODE analysis, relying on the introduction of some Lyapunov
functional below in \eqref{defcE}, the use of the discrete Dirichlet
Laplacian together with the Perron-Frobenius theorem, as the reader
may see below.
Some elements of our analysis are inspired by \cite[Section 6]{JL9}.
However, for the reader's convenience, we provide detailed proofs.

\medskip

We proceed in 3 steps, each making a separate subsection:\\
- In Subsection \ref{sub1}, we recall the first order Toda system from
Proposition \ref{propref}, and state the main result of the section,
where we find the asymptotic behavior of that system.\\
- In Subsection \ref{sub2}, we exhibit some energy structure of that
system, and show that it can be expressed thanks to the discrete
Dirichlet Laplacian (see equations \eqref{eq:a'} and \eqref{eq:b'}
below), where the Perron-Frobenius theorem applies.\\
- In Subsection \ref{sub3}, we study the evolution of the energy
functional, which turns to be a Lyapunov functional, and conclude the
derivation of the asymptotic behavior of our system.
\subsection{Asymptotic behavior of the modulation parameters} \label{sub1}
Let $\bs \xxi(s) = (\xxi_1(s), \ldots, \xxi_k(s)) \in \bR^k$ and $\bs \theta(s) = (\theta_1(s), \ldots, \theta_k(s)) \in \bR^k$
be the modulation parameters introduced in Section \ref{sec2} right
before \eqref{orth}. 
Recall that we defined in \eqref{defJ}
\begin{equation}
\label{eq:J-def}
J(s) = \sum_{j=1}^{k-1}\eee^{-\frac{2}{p-1}(\xxi_{j+1}(s) - \xxi_j(s))}.
\end{equation}
From \eqref{orth}, we know that $\lim_{s \to \infty}(\xxi_{j+1}(s) -
\xxi_j(s)) = \infty$, for all $j \in \{1, \ldots,
k-1\}$. Equivalently, it holds that
\begin{equation}
\label{eq:J-to-0}
\lim_{s\to\infty} J(s) = 0.
\end{equation}
In Proposition \ref{propref},
we proved that there exists $\delta > 0$ such that the following
relations hold for $s$ large enough:
\begin{equation}
\label{toda1}
\begin{aligned}
\frac{1}{A}\xxi_j'(s) = &-\cos(\theta_j(s) - \theta_{j-1}(s))\eee^{-\frac 2{p-1}(\xxi_j(s) - \xxi_{j-1}(s))} \\
&+ \cos(\theta_{j+1}(s) - \theta_{j}(s))\eee^{-\frac 2{p-1} (\xxi_{j+1}(s) - \xxi_{j}(s))} + O(J(s)^{1 + \delta}), \\
\end{aligned}
\end{equation}
\begin{equation}
\label{toda2}
\begin{aligned}
\frac 1B\theta_j'(s) = &-\sin(\theta_j(s) - \theta_{j-1}(s))\eee^{-\frac 2{p-1} (\xxi_j(s) - \xxi_{j-1}(s))} \\
&+ \sin(\theta_{j+1}(s) - \theta_{j}(s))\eee^{-\frac 2{p-1} (\xxi_{j+1}(s) - \xxi_{j}(s))} + O(J(s)^{1 + \delta}),
\end{aligned}
\end{equation}
where
\begin{align*}
  A&=\frac{p(p-1)}{2\iint \rho(y)dy}\kappa_0^{p-1}2^{\frac 2{p-1}}\int_{-\infty}^\infty \cosh^{-\frac{2p}{p-1}}(z)e^{\frac {2z}{p-1}}\tanh z dz >0,\\
  B&=\frac{p-1}{(p+3)\iint \rho(y)dy}\kappa_0^{p-1}2^{\frac 2{p-1}}\int_{-\infty}^\infty \cosh^{-\frac{2p}{p-1}}(z)e^{\frac {2z}{p-1}} dz.
\end{align*}
By a simple integration by parts, we see that
\[
  \int_{-\infty}^\infty \cosh^{-\frac{2p}{p-1}}(z)e^{\frac {2z}{p-1}} dz
  = p \int_{-\infty}^\infty \cosh^{-\frac{2p}{p-1}}(z)e^{\frac {2z}{p-1}}\tanh z dz,
\]
therefore,
\[
\frac AB = \frac{(p+3)}2.  
  \]
The aim of this section is to prove the following result.
\begin{proposition}
  \label{prop:edo-main}
  Take $k\ge 2$. 
If $\bs \xxi$ and $\bs \theta$ are $C^1$ functions satisfying
\eqref{eq:J-to-0}, \eqref{toda1} and \eqref{toda2} for $s$ large
enough, then, there exist $\delta > 0$, $\xxi_{00} \in \bR$ and $\theta_{00}
\in \bR$ such that for all $j=1,\dots,k$,
\begin{align}
\xxi_j(s) &= \xxi_{00} + \kappa_j \log s +\alpha_{k,j}+ O(s^{-\delta}), \label{expxxi}\\
\theta_j(s) &\equiv \theta_{00} + j \pi + O(s^{-\delta})\quad (\tx{mod}\ 2\pi),\nonumber
\end{align}
where $\kappa_j = \frac{p-1}2 \left(j-\frac{k+1}2\right)$ and
$\alpha_{k,j}$ were defined right before \eqref{toda}.
\end{proposition}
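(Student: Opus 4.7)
The plan is to reformulate \eqref{toda1}-\eqref{toda2} as a first-order system driven by the discrete Dirichlet Laplacian $\Delta$; construct a Lyapunov functional $\Phi$ whose monotonicity rests on the Perron-Frobenius property of $-\Delta$; bootstrap a polynomial decay $J(s)\le C/s$ together with the alignment $b_j:=\theta_{j+1}-\theta_j\to\pi\pmod{2\pi}$; then substitute back in \eqref{toda1} to obtain the real-valued first-order Toda system \eqref{toda} and invoke its asymptotic analysis from C\^ote-Zaag \cite{CZcpam13}.

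\textbf{Reformulation.} Introduce $\eta_j:=\xxi_{j+1}-\xxi_j$, $a_j:=\eee^{-\frac{2}{p-1}\eta_j}$, $u_j:=a_j\cos b_j$, $v_j:=a_j\sin b_j$ for $j=1,\dots,k-1$, with the Dirichlet convention $a_0=a_k=0$ (hence $u_0=u_k=v_0=v_k=0$). Subtracting consecutive equations in \eqref{toda1}-\eqref{toda2} and using parity of $\sin,\cos$ yields the compact form
\[
\eta_j'=A(\Delta u)_j+O(J^{1+\delta}),\qquad b_j'=B(\Delta v)_j+O(J^{1+\delta}),
\]
where $\Delta$ is the discrete Dirichlet Laplacian on $\{1,\dots,k-1\}$. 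Summing \eqref{toda1}-\eqref{toda2} telescopes to $\bigl(\sum\xxi_j\bigr)',\bigl(\sum\theta_j\bigr)'=O(J^{1+\delta})$, so once $J$ becomes integrable the averages converge, producing the parameters $\xxi_{00}$ and $\theta_{00}$.

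\textbf{Lyapunov functional, decay and phase alignment.} Set $\Phi(s):=-\sum_j a_j(s)\cos b_j(s)$. Using the equations above and discrete summation by parts (self-adjointness of $\Delta$) one computes
\[
\Phi'(s) = -\tfrac{2A}{p-1}\sum_j(u_{j+1}-u_j)^2 - B\sum_j(v_{j+1}-v_j)^2 + O(J^{2+\delta}),
\]
which is $\le 0$ modulo a small error. Since $|\Phi|\le J\to 0$ by \eqref{eq:J-to-0}, $\Phi(s)\to 0$ monotonically and $\Phi\ge 0$ for $s$ large. The discrete Poincar\'e inequality $\sum_j(\psi_{j+1}-\psi_j)^2\ge\lambda_1\sum_j\psi_j^2$, with $\lambda_1=4\sin^2(\pi/(2k))>0$ the smallest eigenvalue of $-\Delta$ (whose eigenfunction $\phi$ is componentwise positive by Perron-Frobenius), combined with $\sum_j a_j^2\ge J^2/(k-1)$, produces the closed inequality $\Phi'\le -cJ^2+O(J^{2+\delta})$. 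A bootstrap combining this with the monotone quantity $(\sum_j\phi_j\eta_j)'=-A\lambda_1\sum_j\phi_j u_j+O(J^{1+\delta})$ then upgrades this to the sharp rate $J(s)\le C/s$ and $\Phi(s)\le C/s$. The resulting $L^1$-in-time control $\int_s^\infty\sum_j v_j^2\,\vd t=O(s^{-1})$ inherited from $\Phi$ yields $\int_s^\infty|b_j'(t)|\,\vd t=O(s^{-\delta})$ via $b_j'=B(\Delta v)_j+O(J^{1+\delta})$, so $b_j(s)\to b_j^\infty$ at rate $O(s^{-\delta})$; the relation $\Phi\to 0$ with $J\sim 1/s$ forces $\cos b_j^\infty=-1$, i.e.\ $b_j^\infty\equiv\pi\pmod{2\pi}$.

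\textbf{Reduction to real Toda and the main obstacle.} Substituting $b_j=\pi+O(s^{-\delta})$ in \eqref{toda1} recovers the real-valued first-order Toda system \eqref{toda} up to $O(s^{-1-\delta})$; invoking its asymptotic analysis by C\^ote-Zaag \cite{CZcpam13} then yields $\xxi_j(s)=\xxi_{00}+\kappa_j\log s+\alpha_{k,j}+O(s^{-\delta})$, and integrating \eqref{toda2} with the refined bound $\sin(b_j)a_j=O(s^{-1-\delta})$ gives the claimed asymptotic for $\theta_j$. The main obstacle is closing the polynomial decay bootstrap: the inequality $\Phi'\le -cJ^2+O(J^{2+\delta})$ together with the a priori bound $\Phi\le J$ only produces $\Phi\le C/\sqrt{s}$, and pushing to the sharp rate $J\le C/s$ essentially requires a second application of the Perron-Frobenius structure, through the monotonicity of the weighted sum $\sum_j\phi_j\eta_j$, whose growth rate is pinned down by the expected $\eta_j\sim\frac{p-1}{2}\log s$.
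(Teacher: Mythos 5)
Your high-level strategy---Lyapunov functional $\Phi = -\sum_j a_j\cos b_j$ (the paper's $\cE$), gradient-flow structure with the discrete Dirichlet Laplacian and Perron--Frobenius, then reduction to the real-valued Toda system and \cite{CZcpam13}---does match the paper's, but the middle of the argument has two genuine gaps, and your own diagnosis of the obstacle is off. Your ingredients $\Phi' \le -cJ^2 + O(J^{2+\delta})$ and $|\Phi|\le J$ already give $\Phi' \le -c\Phi^2$, hence $\Phi(s)\le C/s$ (not $C/\sqrt s$). What is actually missing is the \emph{reverse} coercivity $J(s)\le C\Phi(s)$, without which the decay of $\Phi$ says nothing about $J$. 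The paper obtains it in one line: $|J'|\le CJ^2$ gives $\Phi'\le -c'|J'|$, and integrating from $s$ to $\infty$ (both $\Phi$ and $J$ tend to $0$) yields $\Phi(s)\ge c'J(s)$. The weighted sum $\sum_j\phi_j\eta_j$ you propose as a fix does not supply this bound and is a red herring.

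The second gap is the polynomial rate $O(s^{-\delta})$, which does not follow from what you write: from $J\sim 1/s$ you get $\int_s^\infty v_j^2\,d\tau = O(1/s)$, but $b_j'$ is \emph{linear} in the $v_j$, and an $L^2$-in-time bound does not imply $\int_s^\infty |v_j|\,d\tau < \infty$ (indeed $v_j\sim 1/s$ is compatible with all your estimates). The paper's rate argument is genuinely more involved: one must split $\bs a = r\bs 1 + \bs z$ along the direction $\bs\sigma$ with $\sigma_j = j(k-j)/2$ (the solution of $\Delta^D\bs\sigma = \bs 1$, \emph{not} the Perron eigenvector $\sin(j\pi/k)$ you use), introduce the renormalized functional $\cF(\bs z,\bs b) = e^r\cE$, prove a quadratic coercivity of $\cF$ near its saddle $(\bs z_{\tx{cr}},\pi\bs 1)$ (Lemma \ref{lem:Ps-lbound}), derive $(\cF-\cF_0)'\le -cs^{-1}(\cF-\cF_0) + O(s^{-1-\delta})$, and pass from the resulting integral bound to a pointwise one by a Tauberian argument. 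None of this structure appears in your sketch, yet the limit $\bs z\to\bs z_{\tx{cr}}$ is precisely what produces the constants $\alpha_{k,j}$ in the conclusion, so the $(r,\bs z)$ decomposition is not optional.
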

\begin{rk}
Another way to express \eqref{expxxi} would be to write $\xxi_j(s) =
\xxi_{00} + \bar \zeta_j(s) + O(s^{-\delta})$, where $(\bar
\zeta_1(s),\dots, \bar \zeta_k(s))$ is the solution of system \eqref{toda}
with zero center of mass defined in the introduction right before \eqref{toda}. Note that 
the number $\delta$ in Proposition~\ref{prop:edo-main} may differ from the number $\delta$ used in \eqref{toda1} and \eqref{toda2}.
\end{rk}

\bigskip

\subsection{Preliminaries} \label{sub2}
 For any $(\bs \xxi, \bs \theta) \in \bR^k\times \bR^k$, we define the
 following functional
\begin{equation}\label{defcE}
\cE(\bs \xxi, \bs \theta) := -\sum_{j=1}^{k-1}\cos(\theta_{j+1} - \theta_j)\eee^{-\frac{2}{p-1}(\xxi_{j+1} - \xxi_j)}.
\end{equation}
In \eqref{eq:lyap}, we will see that this is a Lyapunov functional, if
one neglects the error terms of order $J^{1+\delta}$ in system
\eqref{toda1}-\eqref{toda2}, before showing it is indeed a Lyapunov
functional, without neglecting those terms (see \eqref{Elyap}
below). Let us compute
\begin{equation} \label{dzje}
\begin{aligned}
\partial_{\xxi_j}\cE(\bs \xxi, \bs \theta) &= -\partial_{\xxi_j}\big(\cos(\theta_{j+1} - \theta_j)\eee^{-\frac{2}{p-1}(\xxi_{j+1} - \xxi_j)}
+\cos(\theta_{j} - \theta_{j-1})\eee^{-\frac{2}{p-1}(\xxi_{j} - \xxi_{j-1})}\big)\\
&= -\frac{2}{p-1}\cos(\theta_{j+1} - \theta_j)\eee^{-\frac{2}{p-1}(\xxi_{j+1} - \xxi_j)}
+ \frac{2}{p-1}\cos(\theta_{j} -
\theta_{j-1})\eee^{-\frac{2}{p-1}(\xxi_{j} - \xxi_{j-1})} 
\end{aligned}
\end{equation}
and
\begin{equation}\label{dtje}
\begin{aligned}
\partial_{\theta_j}\cE(\bs \xxi, \bs \theta) &= -\partial_{\theta_j}\big(\cos(\theta_{j+1} - \theta_j)\eee^{-\frac{2}{p-1}(\xxi_{j+1} - \xxi_j)}
+\cos(\theta_{j} - \theta_{j-1})\eee^{-\frac{2}{p-1}(\xxi_{j} - \xxi_{j-1})}\big) \\
&=-\sin(\theta_{j+1} - \theta_j)\eee^{-\frac{2}{p-1}(\xxi_{j+1} - \xxi_j)}
+ \sin(\theta_{j} - \theta_{j-1})\eee^{-\frac{2}{p-1}(\xxi_{j} - \xxi_{j-1})}.
\end{aligned}
\end{equation}
These formulas are valid also for $j = 1$ and $j = k$, if we apply the
usual conventions recalled in the remark right after the statement of
Proposition \ref{propref}.
Our differential system can thus be rewritten as follows:
\begin{equation}
\label{eq:gradient}
\begin{aligned}
\bs \xxi'(s) &= -\frac{(p-1)A}{2}\,\partial_{\bs \xxi}\cE(\bs \xxi(s), \bs \theta(s)) + O(J(s)^{1 +\delta}), \\
\bs \theta'(s) &= -B\,\partial_{\bs \theta}\cE(\bs \xxi(s), \bs \theta(s)) + O(J(s)^{1 +\delta}).
\end{aligned}
\end{equation}
As we have announced earlier, if we neglect the error term of order $J^{1 +\delta}$, these relations immediately imply
that $\cE$ is a Lyapunov functional of the system, which is the crucial observation for our analysis.
To be more precise, noting that $|\partial_{\bs \xxi}\cE| +
|\partial_{\bs \theta}\cE| \lesssim J$ from \eqref{eq:J-def}, we have
\begin{equation}
\label{eq:lyap}
\begin{aligned}
\dd s\cE(\bs \xxi(s), \bs \theta(s)) &= \la \partial_{\bs \xxi}\cE(\bs \xxi(s), \bs \theta(s)), \bs\xxi'(s) \ra
+ \la \partial_{\bs \theta}\cE(\bs \xxi(s), \bs \theta(s)), \bs\theta'(s)\ra \\
&= -\frac{(p-1)A}{2}\,|\partial_{\bs \xxi}\cE(\bs \xxi, \bs \theta)|^2 - B\,|\partial_{\bs \theta}\cE(\bs \xxi, \bs \theta)|^2
+ O(J^{2+\delta}) \\
&= -\frac{2}{(p-1)A}|\bs \xxi'|^2 - \frac{1}{B}|\bs \theta'|^2 + O(J^{2 + \delta}).
\end{aligned}
\end{equation}
Here, and everywhere in this section, the brackets $\la \cdot, \cdot
\ra$ denote the standard Euclidean inner product.
We also denote $\cos \bs b := (\cos b_1, \ldots, \cos b_{k-1})$, $\sin \bs b := (\sin b_1, \ldots, \sin b_{k-1})$,
$\eee^{-\bs a} := (\eee^{-a_1}, \ldots, \eee^{-a_{k-1}})$ etc.
Moreover, $\cos \bs b\,\eee^{-\bs a}$ denotes the \emph{vector} with components $\cos b_j\, \eee^{-a_j}$,
similarly for $\sin \bs b\,\eee^{-\bs a}$ etc.

\medskip

Set
\begin{equation}\label{defajbj}
a_j(s) := \frac{2}{p-1}(\xxi_{j+1}(s) - \xxi_j(s))\mbox{ and }b_j(s) :=
\theta_{j+1}(s) - \theta_j(s).
\end{equation}
With this notation, we have
\begin{equation*}
\begin{aligned}
\partial_{\xxi_j} \cE &= -\frac{2}{p-1}\cos b_j \eee^{-a_j} + \frac{2}{p-1}\cos b_{j-1}\eee^{-a_{j-1}}, \\
\partial_{\theta_j} \cE &= -\sin b_j\eee^{-a_j} + \sin b_{j-1}\eee^{-a_{j-1}}.
\end{aligned}
\end{equation*}
Observe from \eqref{toda1} that
\begin{equation}\label{eqaj}
a_j'(s) = \frac{2A}{p-1}\Big(\cos (b_{j+1}(s)) \eee^{-a_{j+1}(s)} -
2\cos (b_j(s)) \eee^{-a_j(s)} + \cos
(b_{j-1}(s))\eee^{-a_{j-1}(s)}\Big) +O(J(s)^{1+\delta}).
\end{equation}
In shorter form,
\begin{equation}
\label{eq:a'}
\bs a'(s) = -\frac{2A}{p-1}\Delta^D \big(\cos\bs b(s) \eee^{-\bs
  a(s)}\big) +O(J(s)^{1+\delta}),
\end{equation}
where $\Delta^D$ is defined as follows.
\begin{definition}
The (positive) \emph{discrete Dirichlet Laplacian} in dimension 1 is the matrix
$\Delta^D = (\Delta_{jl}) \in \bR^{(k-1)\times (k-1)}$
defined by the conditions
$\Delta_{jj} = 2$ for $j = 1, \ldots, k-1$, $\Delta_{j, j+1} = \Delta_{j+1, j} = -1$ for $j = 1, \ldots, k-2$
and $\Delta_{jl} = 0$ if $|j - l| \geq 2$.
\end{definition}
Similarly as \eqref{eq:a'}, we have
\begin{equation}
\label{eq:b'}
\bs b'(s) = -B\Delta^D \big(\sin\bs b(s) \eee^{-\bs a(s)}\big) +O(J(s)^{1+\delta}).
\end{equation}

For later use, we study in more detail the matrix $\Delta^D$. First, note that the associated quadratic form is
\begin{equation}\label{vdv}
\la \bs v, \Delta^D \bs v\ra = \sum_{j=1}^k (v_j - v_{j-1})^2,\quad\text{for }\bs v \in \bR^{k-1}\text{ and }v_0 = v_k = 0.
\end{equation}
Denote $\bs 1 := (1, \ldots, 1) \in \bR^{k-1}$ and $\bs \sigma := (\sigma_j)_{j=1}^{k-1}$ with $\sigma_j := \frac{j(k-j)}{2}$.
By an elementary computation, one checks that
\begin{equation}\label{lapsig}
\Delta^D\bs\sigma = \bs 1.
\end{equation}
We also denote $\Pi := \{\bs v \in \bR^{k-1}: \la \bs \sigma, \bs v\ra
= 0\}$,
\begin{equation}\label{defc0}
c_0 := \frac{1}{\la \bs \sigma, \bs 1\ra}
= \frac{12}{(k-1)k(k+1)},
\end{equation}
\begin{equation}\label{psv}
P_\sigma \bs v := \bs v - \frac{\la \bs \sigma, \bs v\ra}{\|\bs
  \sigma\|^2}\bs \sigma
\end{equation}
the orthogonal projection of $\bs v$ on $\Pi$, and $P_1 \bs v := \bs v - c_0\la \bs \sigma, \bs v\ra\bs 1$
the projection of $\bs v$ on $\Pi$ along the direction $\bs 1$.
Observe that $P_1 \Delta^D\bs v = \Delta^D \bs v - c_0\la \bs \sigma, \Delta^D\bs v\ra\bs 1 = \Delta^D\bs v - c_0 \la \bs 1, \bs v\ra \bs 1$.
\begin{lemma}
\label{lem:matrix-D}
The matrix $\Delta^D$ is positive definite. There exists a constant $c_1 > 0$ such that for all $\bs v \in \bR^{k-1}$
\begin{equation*}
\la \bs v, P_1 \Delta^D\bs v\ra = \la \bs v, \Delta^D\bs v\ra - c_0\la \bs 1, \bs v\ra^2 \geq c_1 | P_\sigma \bs v |^2.
\end{equation*}
\end{lemma}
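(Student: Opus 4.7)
The plan is to reduce everything to analyzing the symmetric matrix $M := \Delta^D - c_0 \bs 1 \bs 1^T$: I will show that $M$ is positive semi-definite with kernel exactly $\spn(\bs \sigma)$, whereupon the coercivity estimate follows from the spectral gap of $M$ restricted to $\Pi$. The positive definiteness of $\Delta^D$ itself is immediate from \eqref{vdv}, since $\sum_{j=1}^k (v_j - v_{j-1})^2$ is a sum of squares that vanishes only when all consecutive differences are zero, which combined with $v_0 = v_k = 0$ forces $\bs v = \bs 0$. The identity $\la \bs v, P_1 \Delta^D \bs v\ra = \la \bs v, \Delta^D \bs v\ra - c_0 \la \bs 1, \bs v\ra^2$ is essentially the computation already performed in the paragraph preceding the statement: from $P_1 \bs v = \bs v - c_0 \la \bs \sigma, \bs v\ra \bs 1$, the symmetry of $\Delta^D$, and $\Delta^D \bs \sigma = \bs 1$ (see \eqref{lapsig}), one obtains $P_1 \Delta^D \bs v = \Delta^D \bs v - c_0 \la \bs 1, \bs v\ra \bs 1$.

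The heart of the proof is the lower bound $\la \bs v, M \bs v\ra \geq 0$, with equality only on $\spn(\bs \sigma)$, which I will establish by Cauchy--Schwarz applied to the discrete differences $w_j := v_j - v_{j-1}$. The boundary conditions yield the zero-sum constraint $\sum_{j=1}^k w_j = 0$, while summation by parts gives $\la \bs 1, \bs v\ra = \sum_{j=1}^{k-1} (k-j) w_j$. Exploiting the constraint to recenter the coefficients $c_j := k-j$ (with $c_k := 0$) around their mean $\bar c := (k-1)/2$, Cauchy--Schwarz will produce
\[
\la \bs 1, \bs v\ra^2 = \Big(\sum_{j=1}^k (c_j - \bar c)\, w_j\Big)^2 \leq \Big(\sum_{j=1}^k (c_j - \bar c)^2\Big) \sum_{j=1}^k w_j^2,
\]
and a direct calculation checks that the variance factor equals exactly $(k-1)k(k+1)/12 = 1/c_0$; this yields $M \geq 0$. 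The Cauchy--Schwarz equality case forces $w_j$ proportional to $c_j - \bar c$, and summing $v_j = \sum_{i \leq j} w_i$ recovers $\bs v \propto \bs \sigma$, so $\ker M = \spn(\bs \sigma)$. The single delicate point of the plan is this sharp variance identity: any sub-optimal constant would leave a residual gap and fail to pinpoint $\bs \sigma$ as the kernel.

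To conclude, since $M$ is symmetric with $M \bs \sigma = \bs 0$, the orthogonal decomposition $\bs v = P_\sigma \bs v + \tfrac{\la \bs \sigma, \bs v\ra}{\|\bs \sigma\|^2} \bs \sigma$ yields $\la \bs v, M \bs v\ra = \la P_\sigma \bs v, M P_\sigma \bs v\ra$. The restriction of $M$ to the finite-dimensional space $\Pi = \bs \sigma^\perp$ is then a symmetric positive definite operator, so its smallest eigenvalue $c_1 > 0$ provides the bound $\la P_\sigma \bs v, M P_\sigma \bs v\ra \geq c_1 |P_\sigma \bs v|^2$, which completes the proof.
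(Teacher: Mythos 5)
Your argument is correct, but it takes a genuinely different route from the paper's. The paper introduces the matrix $\wt\Delta^D := 2I + c_0\,\bs 1\bs 1^T - \Delta^D$, observes that all its entries equal $c_0$ or $c_0+1$ (hence positive), and invokes the Perron--Frobenius theorem: since $\wt\Delta^D\bs\sigma = 2\bs\sigma$ and $\bs\sigma$ has positive entries, $\bs\sigma$ must be the Perron eigenvector with simple top eigenvalue $2$, so all eigenvalues of $\wt\Delta^D$ restricted to $\Pi = \bs\sigma^\perp$ are strictly below $2$, which gives the coercivity. You instead establish the semi-definiteness of $M := \Delta^D - c_0\,\bs 1\bs 1^T$ by hand: passing to the discrete differences $w_j = v_j - v_{j-1}$ (with $\sum_{j=1}^k w_j = 0$), writing $\la\bs 1,\bs v\ra = \sum_{j=1}^k (k-j)\,w_j$, recentering the coefficients, and applying Cauchy--Schwarz with the exact variance identity $\sum_{j=1}^k\big((k-j)-\tfrac{k-1}{2}\big)^2 = \tfrac{(k-1)k(k+1)}{12} = 1/c_0$; the equality case of Cauchy--Schwarz then identifies $\ker M = \spn(\bs\sigma)$ by explicit summation. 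Both are sound. Your proof is more elementary and self-contained, avoiding the Perron--Frobenius machinery, at the cost of requiring the sharp computation of the variance constant (any weaker constant would leave a gap); the paper's proof is shorter once one grants the general theorem and does not depend on an exact numerical identity. One small point worth making explicit in your write-up: to justify the final spectral-gap step you use that the symmetric operator $M$ with $M\bs\sigma = 0$ maps $\Pi$ into $\Pi$ (so that its restriction is a well-defined positive definite operator on $\Pi$); this follows from $\la\bs\sigma, M\bs u\ra = \la M\bs\sigma, \bs u\ra = 0$, but it deserves a sentence.
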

\begin{proof}
Consider the matrix $\wt \Delta^D = (\wt \Delta_{jl})$ given by $\wt \Delta_{jl} := 2\delta_{jl} + c_0 - \Delta_{jl}$.
The desired inequality is equivalent to
\begin{equation}
\label{eq:perron}
\la \bs v, \wt \Delta^D\bs v\ra \leq 2|\bs v|^2 - c_1 | P_\sigma \bs v |^2.
\end{equation}
Noting that the matrix $\wt \Delta^D$ has positive entries,
the Perron-Frobenius theorem applies, and we have the existence of an
eigenvector $\bs \tau$ with positive entries such that $\wt
\Delta^D\bs \tau = \alpha \bs \tau$ where $\alpha>0$ is
a simple eigenvalue and the largest in modulus. Since $\wt \Delta^D$
is symmetric, hence real diagonalizable in some orthogonal basis, it
has no other eigenvector with positive entries, up to a multiplying factor. Since
\begin{equation}
\wt \Delta^D\bs \sigma = 2\bs\sigma + c_0\la \bs 1, \bs \sigma\ra\bs 1 - \Delta^D\bs \sigma = 2\bs\sigma
\end{equation}
and $\bs \sigma$ has positive entries, it follows by uniqueness that
$\alpha =2$ and we can take $\bs \tau = \bs \sigma$.  
Moreover, all the eigenvalues of the
restriction of  $\wt\Delta^D$ to the hyperplane $\Pi$ are strictly less
than $2$ (in modulus).
Decomposing the inner product in \eqref{eq:perron} into a
contribution along $\sigma$ and another in $\Pi$, \eqref{eq:perron}
follows. This concludes the proof of Lemma \ref{lem:matrix-D}. 
 \end{proof}
Using \eqref{psv}, we get the following consequence of the lemma:
\begin{corollary}\label{corv2}
For some $c_2>0$ and for all $v\in \m R^{k-1}$, it holds that $\la \bs v, \Delta^D\bs v\ra \ge c_2|v|^2$.
\end{corollary}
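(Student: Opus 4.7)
The plan is to deduce the coercivity of $\Delta^D$ directly from Lemma \ref{lem:matrix-D}. That lemma already gives
\[
\la \bs v, \Delta^D \bs v\ra \ge c_0 \la \bs 1, \bs v\ra^2 + c_1 |P_\sigma \bs v|^2,
\]
so the task reduces to showing that the quadratic form on the right is equivalent to $|\bs v|^2$ on the finite-dimensional space $\bR^{k-1}$. The natural way is to use the orthogonal decomposition $\bR^{k-1} = \bR\bs\sigma \oplus \Pi$ attached to the projector $P_\sigma$ defined in \eqref{psv}: write $\bs v = \mu \bs\sigma + \bs w$ with $\bs w = P_\sigma \bs v$, so that $|\bs v|^2 = \mu^2 |\bs\sigma|^2 + |\bs w|^2$.

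The only subtle point is that $\la \bs 1, \cdot\ra$ reads out the $\bs\sigma$-component only up to a correction by $\bs w$: using $\la \bs 1, \bs\sigma\ra = 1/c_0$ from \eqref{defc0}, one has $\la \bs 1, \bs v\ra = \mu/c_0 + \la \bs 1, \bs w\ra$. Cauchy--Schwarz gives $\la \bs 1, \bs w\ra^2 \le (k-1)|\bs w|^2$, and a Young-type inequality $(a+b)^2 \ge a^2/2 - 2b^2$ then yields
\[
\la \bs 1, \bs v\ra^2 \ge \frac{\mu^2}{2 c_0^2} - 2(k-1)|\bs w|^2.
\]
I would then split $\la \bs v, \Delta^D \bs v\ra$ as $\lambda$ times the estimate from Lemma \ref{lem:matrix-D} plus $(1-\lambda)$ times the trivial weaker consequence $\la \bs v, \Delta^D \bs v\ra \ge c_1|\bs w|^2$, and pick $\lambda\in(0,1)$ small enough for the negative contribution $-2\lambda c_0 (k-1)|\bs w|^2$ to be absorbed by $(1-\lambda)c_1|\bs w|^2$. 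This produces a lower bound of the form $\la \bs v, \Delta^D \bs v\ra \ge \alpha \mu^2 + \beta |\bs w|^2$ with $\alpha, \beta > 0$, so $c_2 := \min(\alpha/|\bs\sigma|^2, \beta)$ works.

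No real obstacle is expected: the corollary is essentially the statement that $\Delta^D$ is positive definite, which is also transparent from the quadratic form \eqref{vdv} (if $\sum_{j=1}^k (v_j - v_{j-1})^2 = 0$ with $v_0 = v_k = 0$, a straightforward induction forces $\bs v = 0$, after which compactness of the unit sphere would do the job). The only care needed is the bookkeeping of the two directions $\bR\bs\sigma$ and $\Pi$ on which Lemma \ref{lem:matrix-D} supplies control.
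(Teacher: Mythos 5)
Your argument is correct and matches the route the paper intends: the paper gives no explicit proof, merely saying ``Using \eqref{psv}, we get the following consequence of the lemma,'' which points precisely to the $P_\sigma$-decomposition $\bs v = \mu\bs\sigma + P_\sigma\bs v$ and the quantitative bound of Lemma~\ref{lem:matrix-D} that you then exploit with the $\lambda$-splitting and Young-type absorption. As you also observe, an even shorter route is available since the first sentence of Lemma~\ref{lem:matrix-D} literally asserts that the symmetric matrix $\Delta^D$ is positive definite, which in finite dimensions is equivalent to the claimed coercivity by compactness of the unit sphere; your quantitative argument has the merit of making the constant $c_2$ traceable in terms of $c_0$, $c_1$, $k$ and $|\bs\sigma|$.
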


\subsection{Differential inequalities for Lyapunov functionals}\label{sub3}
\begin{lemma}
\label{lem:a-bound}
For all $j \in \{1, \ldots, k-1\}$ the function $a_j(s) - \log s$ is bounded.
\end{lemma}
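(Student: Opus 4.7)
My plan is to exploit the Lyapunov structure \eqref{eq:lyap} together with the coercivity of the discrete Dirichlet Laplacian (Corollary \ref{corv2}) to show $\cE(s) = O(1/s)$, upgrade this to $J(s) = O(1/s)$, and then derive the bounds on $a_j$.

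First, using \eqref{dzje}--\eqref{dtje} and the identity \eqref{vdv} for the quadratic form of $\Delta^D$, I recognize
\[
|\partial_{\bs \xxi}\cE|^2 = \Bigl(\tfrac{2}{p-1}\Bigr)^2 \la \bs r, \Delta^D \bs r\ra, \qquad |\partial_{\bs \theta}\cE|^2 = \la \bs i, \Delta^D \bs i\ra,
\]
where $r_j := \cos b_j\, \eee^{-a_j}$ and $i_j := \sin b_j\, \eee^{-a_j}$. Since $r_j^2+i_j^2 = \eee^{-2 a_j}$, Corollary \ref{corv2} combined with the Cauchy--Schwarz inequality yields $|\partial_{\bs \xxi}\cE|^2+|\partial_{\bs \theta}\cE|^2 \ge c_1 J^2$. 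Plugging this into the ``gradient'' form of \eqref{eq:lyap} and absorbing the $O(J^{2+\delta})$ remainder into half of the main term (valid for $s$ large, since $J\to 0$), I obtain $\cE'(s) \le -c_2 J(s)^2$. Because $|\cE|\le J\to 0$, the functional $\cE$ is eventually non-increasing and converges to $0$, so $\cE(s)\ge 0$ for $s$ large; combining $\cE\le J$ with $\cE'\le -c_2 J^2$ then gives $\cE'\le -c_2 \cE^2$, which integrates to $\cE(s)\le C/s$.

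The main obstacle is upgrading $\cE(s)\le C/s$ to the stronger bound $J(s)\le C/s$, i.e.\ controlling the ``phase misalignment'' $J-\cE = \sum(1+\cos b_j)\,\eee^{-a_j}\ge 0$. For this I would use the $\bs b$-equation \eqref{eq:b'} together with the integrated bound $\int_s^\infty |\bs \theta'|^2\,d\sigma \le C\cE(s)\le C'/s$ obtained from \eqref{eq:lyap}, via a bootstrap argument in the spirit of \cite[Section 6]{JL9}, exploiting that $\bs b'$ has the same $\Delta^D$-structure as $\bs a'$. Once $J(s)\le C/s$ is established, the lower bound $a_j(s)\ge \log s - C$ is immediate from $\eee^{-a_j(s)}\le J(s)$, while the matching upper bound $a_j(s)\le \log s + C$ requires sharpening the coefficient beyond what the direct integration $|a_j'|\le CJ$ provides; for this I would use the identity $\Delta^D\bs \sigma=\bs 1$ from \eqref{lapsig}, which pins down the expected profile $\eee^{-\bs a}\sim \bs \sigma/(Ks)$ with $K=\tfrac{2A}{p-1}$, and close a Gronwall-type argument on $\bs a - (\log s)\bs 1$.
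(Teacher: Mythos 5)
Your overall architecture — coercivity of $\Delta^D$, Lyapunov decay of $\cE$, then $\cE = O(1/s)$, then $J = O(1/s)$, then a decomposition along the $\bs\sigma$ direction — matches the paper's. However, there is one genuine gap at the pivot of the argument: the upgrade from $\cE(s) \le C/s$ to $J(s) \le C/s$. You correctly identify that one needs to control the ``phase misalignment'' $J - \cE = \sum_j (1+\cos b_j)\,\eee^{-a_j}$, and you propose a bootstrap through the $\bs b$-equation using an $L^2_s$ bound on $\bs\theta'$. That step is not worked out, and it is unclear it would close: integrated bounds on $|\bs\theta'|^2$ do not readily give pointwise decay of $1+\cos b_j$. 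The paper instead uses a much simpler trick that you miss: from \eqref{eqaj} one has $|a_j'| \lesssim J$, hence $|J'(s)| \le C J(s)^2$. Combining with $\cE' \le -C^{-1}J^2$ gives $\cE' \le -C^{-2}|J'|$; integrating from $s$ to $\infty$ (using $\cE(\infty)=J(\infty)=0$) immediately yields the two-sided comparison $\cE(s) \ge C^{-2}J(s)$. No bootstrap through the $\bs b$-equation is needed, and this is the step that actually makes the proof go.

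This missing comparison $\cE \gtrsim J$ is not only a shortcut — it is also load-bearing for your last step. To get the upper bound $a_j(s) \le \log s + C$, it is not enough to know $\cE(s) \le C/s$ with some unspecified constant: the paper needs the sharp asymptotic $\cE(s) \le \tfrac{p-1}{2Ac_0}s^{-1} + O(s^{-1-\delta})$, which it obtains from $(\cE^{-1})' \ge \tfrac{2Ac_0}{p-1} + O(J^\delta)$, and the absorption of the $O(J^{2+\delta})$ error into $o(\cE^2)$ in that step only works because $J \lesssim \cE$. Without it, your Gronwall argument on $\bs a - (\log s)\bs 1$ would only give $r(s) \le C'\log s + C$ with a wrong constant $C'$, not the needed $r(s) \le \log s + C$. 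So the single missing observation $|J'| \le CJ^2 \Rightarrow \cE \ge cJ$ is what both repairs the $\cE \to J$ upgrade and enables the upper bound you leave ``to be closed by Gronwall''.
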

\begin{proof}
By a slight abuse of notation, we denote $\cE(s) := \cE(\bs \xxi(s),
\bs \theta(s))$ introduced in \eqref{defcE}. Using the expressions \eqref{dzje} and \eqref{dtje}
of the derivatives of $\cE$, together with \eqref{vdv} and Corollary
\ref{corv2}, we see that
\begin{equation}\label{coer}
|\partial_{\bs \xxi}\cE|^2 + |\partial_{\bs \theta}\cE|^2\gtrsim |\cos\bs b\,\eee^{-\bs a}|^2 + |\sin\bs b\,\eee^{-\bs a}|^2
\gtrsim \la \bs 1, \eee^{-\bs a}\ra^2,
\end{equation}
where the last inequality follows from equivalence of norms.
By \eqref{eq:lyap}, the last bound and the definition
\eqref{eq:J-def} of $J(s)$, for all $s$ large enough we have
\begin{equation}\label{Elyap}
\cE'(s) \leq -C^{-1}J(s)^2.
\end{equation}
We also have $|J'(s)| \leq CJ(s)^2$ from
\eqref{eq:J-def}, \eqref{defajbj} and \eqref{eqaj}, thus $\cE'(s) \leq
-C^{-2}|J'(s)|$. Recalling that both $J(s)$ and $\cE(s)$ go to $0$ as
$s\to \infty$ from \eqref{eq:J-to-0} and \eqref{defcE}, we may
integrate in $s$ to obtain
\begin{equation}
\label{eq:E-and-J}
\cE(s) \geq C^{-2}\int_s^\infty |J'(\tau)|\ud \tau \geq C^{-2}J(s).
\end{equation}
Next, arguing as for \eqref{coer}, then applying Lemma~\ref{lem:matrix-D} with $\bs v = \cos\bs b\,\eee^{-\bs a}$, we get
\begin{equation*}
\begin{aligned}
|\partial_{\bs \xxi}\cE|^2 &= \frac{4}{(p-1)^2}\sum_{j=1}^k(\cos b_j \eee^{-a_j} - \cos b_{j-1}\eee^{-a_{j-1}})^2
= \frac{4}{(p-1)^2}\la \cos\bs b\,\eee^{-\bs a}, \Delta^D (\cos\bs b\,\eee^{-\bs a})\ra \\
&\geq \frac{4c_0}{(p-1)^2}\la \bs 1, \cos\bs b\,\eee^{-\bs a}\ra^2 = \frac{4c_0}{(p-1)^2}\cE^2.
\end{aligned}
\end{equation*}
Hence, \eqref{eq:lyap} and \eqref{eq:E-and-J} yield
\begin{equation*}
\cE'(s) \leq -\frac{2Ac_0}{p-1}\cE(s)^2 + O(J(s)^{2+\delta}) \leq \Big({-}\frac{2Ac_0}{p-1} + O(J(s)^\delta)\Big)\cE(s)^2.
\end{equation*}
Dividing both sides by the positive number $\cE(s)^2$ and using the Chain Rule, we have
\begin{equation}
\label{eq:E-1-deriv}
(\cE(s)^{-1})' \geq \frac{2Ac_0}{p-1} + O(J(s)^\delta),
\end{equation}
and an integration in time yields
\begin{equation*}
\cE(s)^{-1} \geq \Big(\frac{2Ac_0}{p-1} + o(1)\Big)s,
\end{equation*}
implying
\begin{equation}
\label{eq:E-ubound}
\cE(s) \leq \Big(\frac{p-1}{2Ac_0} + o(1)\Big)s^{-1}.
\end{equation}
The bound \eqref{eq:E-and-J} yields
\begin{equation}
\label{eq:J-ubound}
J(s) \leq Cs^{-1}.
\end{equation}
Plugging this into \eqref{eq:E-1-deriv} and integrating in time leads to
\begin{equation*}
\cE(s)^{-1} \geq \Big(\frac{2Ac_0}{p-1}+ O(s^{-\delta})\Big)s,
\end{equation*}
thus we can improve \eqref{eq:E-ubound} and write
\begin{equation}
\label{eq:E-ubound-ref}
\cE(s) \leq \frac{p-1}{2Ac_0}s^{-1} + O(s^{-1-\delta}).
\end{equation}
The bound \eqref{eq:J-ubound} also implies by definitions
\eqref{eq:J-def} and \eqref{defajbj} of $J(s)$ and $a_j(s)$ that
\begin{equation}
\label{eq:a-lbound}
a_j(s) \geq \log s - C, \qquad\text{for all }j \in \{1, \ldots, k-1\}.
\end{equation}

In order to obtain the upper bound, we decompose
\begin{equation}\label{defrz}
\bs a(s) = r(s) \bs 1 + \bs z(s), \qquad r(s) = c_0\la \bs \sigma, \bs a(s)\ra, \qquad \la \bs \sigma, \bs z(s)\ra = 0.
\end{equation}
Using the equation \eqref{eq:a'} satisfied by $\bs a(s)$, we have
\begin{equation*}
r'(s)\bs 1 + \bs z'(s) = \bs a'(s) = {-}\frac{2A}{p-1}\Delta^D(\cos \bs b(s)\,\eee^{-\bs a(s)}) + O(s^{-1-\delta}).
\end{equation*}
Taking the inner product with $\bs \sigma$, then using the fact that
$\Delta^D$ is symmetric and that $\Delta^D \bs \sigma =\bs 1$ (use
\eqref{lapsig}), we see by definitions \eqref{defcE} and \eqref{defc0}
of $\cE$ and $c_0$ that 
\begin{equation}
\label{eq:dsr}
r'(s) = \frac{2Ac_0}{p-1}\cE(s) + O(s^{-1-\delta}).
\end{equation}
Integrating in time and using \eqref{eq:E-ubound-ref}, we get
\begin{equation*}
r(s) \leq \log s + C.
\end{equation*}
But $r$ is a weighted mean of $a_1, \ldots, a_{k-1}$ by definition
\eqref{defc0} of $c_0$, so \eqref{eq:a-lbound} implies that we also have the desired upper bound
\begin{equation}
\label{eq:a-ubound}
a_j(s) \leq \log s + C, \qquad\text{for all }j \in \{1, \ldots, k-1\}.
\end{equation}
This concludes the proof of Lemma \ref{lem:a-bound}.
\end{proof}

We further study the evolution in the variables $(r, \bs z, \bs b)$.
From Lemma~\ref{lem:a-bound} and by definition \eqref{defrz} of $r$
and $\bs z$, we know that  $r(s) - \log s$ and $\bs z(s)$ are bounded functions.
In particular, \eqref{eq:J-ubound} holds and we may 
rewrite \eqref{eq:a'} and \eqref{eq:b'} as
\begin{equation}
\label{eq:a'b'}
\begin{aligned}
\bs a'(s) &= -\frac{2A}{p-1}\eee^{-r(s)}\Delta^D\big(\cos \bs b(s)\,\eee^{-\bs z(s)}\big) + O(s^{-1-\delta}), \\
\bs b'(s) &= -B\eee^{-r(s)}\Delta^D\big(\sin \bs b(s)\,\eee^{-\bs z(s)}\big) + O(s^{-1-\delta}).
\end{aligned}
\end{equation}
Applying $P_1$ to the first equation above, we obtain
\begin{equation}
\label{eq:z'}
\bs z'(s) = -\frac{2A}{p-1}\eee^{-r(s)}P_1\Delta^D\big(\cos \bs b(s)\,\eee^{-\bs z(s)}\big) + O(s^{-1-\delta}).
\end{equation}

Consider the functional $\cF: \Pi \times \bR^{k-1} \to \bR$ defined by
\begin{equation}\label{defcF}
\cF(\bs z, \bs b) := -\sum_{j=1}^{k-1}\cos(b_j)\eee^{-z_j} = -\la \cos \bs b, \eee^{-\bs z}\ra.
\end{equation}
Note that
\begin{equation}
\label{eq:F-er-E}
\cF(\bs z(s), \bs b(s)) = \eee^{r(s)}\cE(\bs\xxi(s), \bs\theta(s)).
\end{equation}
It turns out that, even though $\eee^{r(s)}$ grows linearly, $\cF$ is still a Lyapunov functional, up to error terms.
In order to see this, we write
\begin{equation}
\label{eq:dsF}
\begin{aligned}
&\dd s\cF(\bs z(s), \bs b(s)) = \la \bs z', \cos \bs b\,\eee^{-\bs z}\ra + \la \bs b', \sin\bs b\,\eee^{-\bs z}\ra \\
&\quad= -\frac{2A}{p-1}\eee^{-r}\la P_1 \Delta^D(\cos \bs b\,\eee^{-\bs z}), \cos \bs b\,\eee^{-\bs z}\ra - B\eee^{-r}\la \Delta^D(\sin \bs b\,\eee^{-\bs z}), \sin\bs b\,\eee^{-\bs z}\ra + O(s^{-1-\delta}) \\
&\quad \lesssim -\eee^{-r}\big(|P_\sigma (\cos \bs b\,\eee^{-\bs z})|^2 + |\sin \bs b\,\eee^{-\bs z}|^2\big) + O(s^{-1-\delta}),
\end{aligned}
\end{equation}
where in the last step we use Lemma~\ref{lem:matrix-D} and Corollary
\ref{corv2}. We need the following fact. 
\begin{lemma}
\label{lem:Ps-lbound}
Set $\bs z_{\tx{cr}} := -\log\bs\sigma + c_0 \la \bs\sigma, \log\bs\sigma\ra\bs 1 \in \bR^{k-1}$.
\begin{enumerate}[(i)]
\item
For any $R > 0$ and $\epsilon > 0$ small enough there exists $\eta = \eta(R, \epsilon) > 0$ such that if
\begin{equation}\label{eq:zb-far}
\bs z \in \Pi,\;\;
  |\bs z| \leq R, \qquad |\bs z - \bs z_{\tx{cr}}| + \min(|\cos\bs b-\bs 1|, |\cos\bs b+\bs 1|) \geq \epsilon,
\end{equation}
then
\begin{equation*}
|P_\sigma (\cos\bs b\,\eee^{-\bs z})| + |\sin \bs b\,\eee^{-\bs z}|\geq \eta.
\end{equation*}
\item There exist $\epsilon_0 > 0$ and $C > 0$ such that if 
\begin{equation*}
|\bs z - \bs z_{\tx{cr}}| + |\bs b-\pi\bs 1| \leq \epsilon_0,
\end{equation*}
then
\begin{equation*}
|P_\sigma (\cos\bs b\,\eee^{-\bs z})| + |\sin \bs b\,\eee^{-\bs z}|\geq C^{-1}\big(|\bs z - \bs z_{\tx{cr}}| + |\bs b - \pi\bs 1|\big).
\end{equation*}
\end{enumerate}
\end{lemma}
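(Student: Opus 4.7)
The strategy is to first identify the zero set of
$$f(\bs z, \bs b) := |P_\sigma(\cos\bs b\,\eee^{-\bs z})| + |\sin\bs b\,\eee^{-\bs z}|$$
on $\Pi \times \bR^{k-1}$, then deduce (i) by compactness and (ii) by a first-order Taylor expansion. Suppose $f(\bs z, \bs b) = 0$. Then $\sin b_j = 0$ for every $j$, so $\cos b_j = \epsilon_j \in \{\pm 1\}$, and $(\epsilon_j \eee^{-z_j})_j$ is parallel to $\bs\sigma$. Since $\eee^{-z_j}$ and $\sigma_j$ are positive, all $\epsilon_j$ share a common sign, so there exists $\lambda > 0$ with $z_j = -\log\sigma_j - \log\lambda$ for every $j$. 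Imposing $\bs z \in \Pi$ determines $\lambda$ and forces $\bs z = \bs z_{\tx{cr}}$. Hence, modulo $2\pi\bZ^{k-1}$ in $\bs b$, the zeros of $f$ are exactly the points $(\bs z_{\tx{cr}}, \bs b)$ with $\cos\bs b = \pm\bs 1$.

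For part (i), since $f$ is $2\pi$-periodic in each $b_j$, I work on the compact set
$$K := \{(\bs z, \bs b) \in \Pi\times\bT^{k-1} : |\bs z| \leq R,\ |\bs z - \bs z_{\tx{cr}}| + \min(|\cos\bs b - \bs 1|, |\cos\bs b + \bs 1|) \geq \epsilon\},$$
on which $f$ is continuous and, by the previous characterization of zeros, strictly positive. Hence $f$ attains a positive minimum $\eta = \eta(R, \epsilon) > 0$, giving the desired bound.

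For part (ii), I set $\bs u := \bs z - \bs z_{\tx{cr}} \in \Pi$ and $\bs v := \bs b - \pi\bs 1$, and write $\eee^{-\bs z_{\tx{cr}}} = C_0 \bs\sigma$ componentwise, where $C_0 := \eee^{c_0\la\bs\sigma,\log\bs\sigma\ra}$. A first-order Taylor expansion gives
\begin{align*}
\cos b_j\,\eee^{-z_j} &= -C_0 \sigma_j + C_0 \sigma_j u_j + O(|\bs u|^2 + |\bs v|^2), \\
\sin b_j\,\eee^{-z_j} &= -C_0 \sigma_j v_j + O(|\bs v|(|\bs u| + |\bs v|^2)).
\end{align*}
Since $\bs\sigma \perp \Pi$, we have $P_\sigma\bs\sigma = 0$, so the first line yields
$$P_\sigma(\cos\bs b\,\eee^{-\bs z}) = C_0 P_\sigma\big((\sigma_j u_j)_j\big) + O(|\bs u|^2 + |\bs v|^2).$$
The linear map $T : \Pi \to \Pi$, $\bs u \mapsto P_\sigma((\sigma_j u_j)_j)$, is injective: if $T\bs u = 0$ then $(\sigma_j u_j)_j$ is parallel to $\bs\sigma$, so $\bs u$ is a multiple of $\bs 1$, and $\bs u \in \Pi$ then forces $\bs u = 0$. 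Therefore $|T\bs u| \gtrsim |\bs u|$, and positivity of $\sigma_j$ yields $|(\sigma_j v_j)_j| \gtrsim |\bs v|$. Combining these two estimates, for $\epsilon_0$ small enough one obtains $f(\bs z, \bs b) \gtrsim |\bs u| + |\bs v|$, as required.

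The only mildly delicate point is the identification of the zero set: positivity of the entries of both $\bs\sigma$ and $\eee^{-\bs z}$ is essential to rule out configurations with mixed signs $\epsilon_j$, and one must check that both $\cos\bs b = +\bs 1$ and $\cos\bs b = -\bs 1$ genuinely give zeros, which is exactly what justifies the $\min$ appearing in hypothesis \eqref{eq:zb-far}. Everything else is routine compactness together with finite-dimensional linear algebra on the hyperplane $\Pi$.
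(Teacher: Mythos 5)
Your proof is correct, and it takes a genuinely different route from the paper's on both items. For (i), you characterize the zero set of $f(\bs z,\bs b) = |P_\sigma(\cos\bs b\,\eee^{-\bs z})| + |\sin\bs b\,\eee^{-\bs z}|$ and then invoke compactness of $\{|\bs z|\le R\}\times\bT^{k-1}$ intersected with the closed constraint set, so that the positive continuous $f$ attains a positive minimum; the paper instead argues by cases (large $|\sin b_j|$, mixed signs of $\cos b_j$, and the remaining case reduced via the triangle inequality \eqref{eq:Ps-triangle} to the quantitative statement that $|P_\sigma\eee^{-\bs z}|\le\eta$ forces $|\bs z - \bs z_{\tx{cr}}|\lesssim\eta$). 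Your compactness argument is shorter and actually covers all $\epsilon>0$ without the ``small enough'' caveat, at the price of not exhibiting how $\eta$ depends on $\epsilon$. For (ii), you perform a first-order Taylor expansion of $\cos\bs b\,\eee^{-\bs z}$ and $\sin\bs b\,\eee^{-\bs z}$ around $(\bs z_{\tx{cr}},\pi\bs 1)$ using $\eee^{-\bs z_{\tx{cr}}} = C_0\bs\sigma$, and the key algebraic fact is the injectivity of $T:\Pi\to\Pi$, $\bs u\mapsto P_\sigma\big((\sigma_j u_j)_j\big)$, which follows from $\bs 1\notin\Pi$. The paper instead recycles \eqref{eq:Ps-triangle} together with the (contrapositive of the) quantitative estimate from (i). Your version of (ii) is more self-contained but computational; the paper's shares the machinery between the two items. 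Either way, the identification of the zero set and the positivity of the entries of $\bs\sigma$ and $\eee^{-\bs z}$ (to rule out mixed-sign configurations) are the essential points, and you handle both correctly.
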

\begin{proof}[Proof of (i)]
If $|\sin b_j| \gtrsim 1$ for some $j$, then the conclusion is immediate.
Hence, we can assume $|\sin b_j| \ll 1$, which implies $\min(|\cos b_j - 1|, |\cos b_j + 1|) \ll 1$ for all $j$.
We claim that, if the lemma was to fail, this has to be either $1$ for all $j$, or $-1$ for all $j$.
Indeed, if the sign changes, then the angle between the directions of $\cos \bs b\,\eee^{-\bs z}$ and $\bs \sigma$ is bounded from below,
implying $|P_\sigma(\cos\bs b\,\eee^{-\bs z})| \gtrsim |\cos\bs
b\,\eee^{-\bs z}|$, from where the conclusion follows quite
easily. 

Assume $|\cos \bs b + \bs 1| \ll 1$ (the other case follows by changing $\bs b$ to $\pi\bs 1 - \bs b$).
Upon shifting the components of $\bs b$ by multiples of $2\pi$, we can assume $|\bs b-\pi\bs 1| \ll 1$.
In particular, \eqref{eq:zb-far} implies $|\bs z - \bs z_{\tx{cr}}| \geq \frac 12 \epsilon$.
We have $\cos\bs b\,\eee^{-\bs z} = {-}\eee^{-\bs z}+ (\bs 1 + \cos\bs b)\,\eee^{-\bs z}$, thus
\begin{equation}
\label{eq:Ps-triangle}
\begin{aligned}
|P_\sigma (\cos\bs b\,\eee^{-\bs z})| + |\sin \bs b\,\eee^{-\bs z}| &\geq |P_\sigma \eee^{-\bs z}| - |(\bs 1 +\cos\bs b)\,\eee^{-\bs z}| + |\sin \bs b\,\eee^{-\bs z}| \\
&\geq |P_\sigma \eee^{-\bs z}| + \frac 12 |(\bs b-\pi\bs 1)\,\eee^{-\bs z}|,
\end{aligned}
\end{equation}
where the last inequality follows from $|\bs b-\pi\bs 1|$ being small.
We only need to show that $|\bs z - \bs z_{\tx{cr}}| \geq \frac 12 \epsilon$ implies $|P_\sigma \eee^{-\bs z}| \geq \eta$.

We can rewrite the condition $|P_\sigma \eee^{-\bs z}| \leq\eta$ as follows:
\begin{equation*}
\eee^{-\bs z} = \lambda \bs \sigma + \bs u, \qquad \lambda \in \bR,\ \bs u \in \Pi,\ |\bs u| \leq \eta.
\end{equation*}
Note that $\lambda$ is of size $1$. We have
\begin{equation*}
|{-}\bs z - \log(\lambda\bs \sigma)| = |\log(\lambda \bs \sigma + \bs u) - \log(\lambda\bs \sigma)| \lesssim |\bs u| \lesssim \eta.
\end{equation*}
The vector $\bs z_{\tx{cr}}$ is in fact determined by the expression
$\bs z_{\tx{cr}} = -\log(\lambda_{\tx{cr}}\bs \sigma)$, where
$\lambda_{\tx{cr}}$ is the unique value for which $\bs z_{\tx{cr}} \in
\Pi$. We thus get from the above 
\begin{equation*}
|\bs z - \bs z_{\tx{cr}} + (\log\lambda-\log\lambda_{\tx{cr}})\bs 1| \lesssim \eta.
\end{equation*}
Taking the inner product with $\sigma$ we find out that $|\log\lambda-\log\lambda_{\tx{cr}}| \lesssim \eta$,
which yields $|\bs z- \bs z_{\tx{cr}}| \lesssim \eta$ and finishes the proof.\phantom\qedhere
\end{proof}
\begin{proof}[Proof of (ii)]
By \eqref{eq:Ps-triangle}, it suffices to check that $|P_\sigma \eee^{-\bs z}| \gtrsim |\bs z - \bs z_\tx{cr}|$.
But in the proof of (i) above, we obtained that $|P_\sigma \eee^{-\bs z}| \leq \eta$ implies $|\bs z - \bs z_\tx{cr}| \lesssim \eta$, which is the transposition of the desired estimate.
\end{proof}
\begin{remark}
\label{rem:critical}
The function
$\bs z \mapsto \la \bs 1, \eee^{-\bs z}\ra$, defined for $\bs z \in \Pi$,
is strictly convex and tends to infinity when $|\bs z| \to \infty$
(use the fact that whenever $|\bs z_n| \to \infty$ for some sequence
$\bs z_n \in \Pi$, then, $z_{n,j(n)}\to -\infty$ for some choice of $j(n)$). 
Thus it has a unique critical point $\bs z_{\tx{cr}}$, which is its global minimum.
The Euler-Lagrange equation yields $\eee^{-\bs z_{\tx{cr}}} = \lambda_{\tx{cr}}\bs\sigma$,
where $\lambda_{\tx{cr}}\in \bR$ is the unique number for which $\bs z_{\tx{cr}} \in \Pi$.

The function $\cF:(\bs z, \bs b) \mapsto -\la \cos \bs b, \eee^{-\bs z}\ra$,
defined in \eqref{defcF} on the set $\{(\bs z, \bs b) \in \Pi \times \bR^{k-1}\}$,
has a discrete set of critical points $(\bs z_{\tx{cr}}, \bs b_{\tx{cr}})$, where $\bs b_{\tx{cr}}$
is any vector such that $\cos \bs b_{\tx{cr}} = \bs 1$ or $\cos \bs b_{\tx{cr}} = -\bs 1$.
A priori, $\cos \bs b_{\tx{cr}}$ could be a vector whose components include both $1$ and $-1$, but it is not so.
Indeed, from the Euler-Lagrange equation,
if the function $\bs z \mapsto \sum_{j=1}^{k-1}{\iota_j}\eee^{-z_j}$
has a critical point on $\Pi$, then all the signs $\iota_j$ are the
same. 

Note that $(\bs z_{\tx{cr}}, \pi\bs 1)$ is a saddle point of $\cF(\bs z, \bs b)$,
related to $\max_{\bs b}\min_{\bs z}\cF(\bs z, \bs b)$.
\end{remark}
\begin{proof}[Proof of Proposition~\ref{prop:edo-main}]

  $ $
  
\textbf{Step 1.} (Convergence to the saddle point).
Our first goal is to prove that $\lim_{s \to \infty}\bs z(s) = \bs z_\tx{cr}$
and $\lim_{s\to\infty}\bs b(s) \ \tx{mod}\ 2\pi = \pi\bs 1$.

For the sake of simplicity, in this proof we write $\cF(s) := \cF(\bs z(s), \bs b(s))$,
so that $\cF(s) = \eee^{r(s)}\cE(s)$, see \eqref{eq:F-er-E}.
Since $\cE(s) > 0$ for all $s$ large enough by \eqref{eq:E-and-J} and
\eqref{eq:J-def}, we also have
\begin{equation}\label{cFpos}
\cF(\bs z(s), \bs b(s)) > 0
\end{equation}
  for all $s$ large enough.
Observe that, by \eqref{eq:dsF}, $\cF$ is a sum of a decreasing function
and a term of size $O(s^{-\delta})$, in particular $\lim_{s\to\infty}\cF(s)$ exists.
If it is not true that $\bs z(s) \to \bs z_\tx{cr}$, then there exist $\epsilon > 0$ and a sequence
$s_n$ such that $s_n \to \infty$ and 
\begin{equation}\label{contra}
|\bs z(s_n) - \bs z_\tx{cr}| \geq 2\epsilon, \qquad\text{for all }n.
\end{equation}
Since $|\bs z'(s)|\lesssim s^{-1} \leq s_n^{-1}$ for $s \geq s_n$ (see
\eqref{eq:z'}, the definition \eqref{defrz} of $r(s)$, Lemma
\ref{lem:a-bound}, and remember that $\bs b$ is bounded as we wrote 2
lines before \eqref{eq:a'b'}),  there exists $C_1 > 1$ such that for all $n$ we have
\begin{equation*}
|\bs z(s) - \bs z_\tx{cr}| \geq \epsilon,\qquad\text{for all }s \in [s_n, s_n + C_1^{-1}s_n].
\end{equation*}
By Lemma~\ref{lem:Ps-lbound}, \eqref{eq:dsF}, the definition
\eqref{defrz} of $r(s)$ and Lemma \ref{lem:a-bound}, there exist $C_2, C_3 > 0$ such that for all $s \in [s_n, s_n + C_1^{-1}s_n]$ we have
\begin{equation}
\cF'(s) \leq -C_2^{-1}\eee^{-r(s)} \leq -C_3^{-1}(s_n+C_1s_n)^{-1} \leq -(2C_3)^{-1}s_n^{-1},
\end{equation}
which implies $\cF(s_n + C_1^{-1}s_n) - \cF(s_n) \leq -(2C_1C_3)^{-1}$ for all $n$,
contradicting the fact that $\lim_{s\to\infty}\cF(s)$ exists. 
Thus, $\lim_{s\to\infty}\bs z(s) = \bs z_\tx{cr}$. 

Analogously, $\lim_{s\to\infty} \min(|\cos\bs b(s)-\bs 1|, |\cos\bs b(s)+\bs 1|) = 0$,
which, by continuity, implies that $\lim_{s\to \infty} \cos \bs b(s) =
\bs 1$ or $\lim_{s\to \infty} \cos \bs b(s) = -\bs 1$.
Recalling that $\cF(s)>0$ by \eqref{cFpos}, we see that  the former
option is excluded, because it would imply $\cF(s) < 0$ for $s$ large
enough by \eqref{defcF}. 

\medskip

\textbf{Step 2.} (Polynomial convergence rate.)
We now prove that there exists $\delta > 0$ such that
\begin{equation}
\label{eq:poly-rate}
|\bs z(s) - \bs z_\tx{cr}| + |\bs b(s)\ \tx{mod}\ 2\pi - \pi\bs 1| \leq s^{-\delta},\qquad\text{for }s \gg 1.
\end{equation}
By adding suitable multiples of $2\pi$ to the functions $\theta_j(s)$, we can assume without loss of generality that
$\lim_{s\to\infty} \bs b(s) = \pi\bs 1$, so that we need to estimate $|\bs b(s) - \pi\bs 1|$.

Let $\cF_0 := \la \bs 1, \eee^{-\bs z_\tx{cr}}\ra = \cF(\bs z_\tx{cr}, \pi\bs 1) = \lim_{s\to\infty}\cF(s)$.
Let $\wt\cF(s) := \cF(s) - \cF_0$. By \eqref{eq:dsF}, we have
\begin{equation}
\label{eq:Fs-sdel}
\wt \cF(s) \geq -Cs^{-\delta},\qquad\text{for }s\text{ large enough.}
\end{equation}
Also, we have
\begin{equation}
\wt\cF(s) = \cF(s) - \cF_0 \leq \la \bs 1, \eee^{-\bs z(s)}\ra - \cF_0 \leq C|\bs z(s) - \bs z_\tx{cr}|^2,
\end{equation}
where the last inequality is a consequence of $\bs z_\tx{cr}$ being a
critical point of $\Pi \owns \bs z \mapsto \la \bs 1, \eee^{-\bs z}\ra$, see
Remark~\ref{rem:critical}. Thus, noting that
\begin{equation}\label{ser}
\frac 1C \le se^{-r(s)}\le C
\end{equation}
  from what
we wrote 2 lines before \eqref{eq:a'b'}, we may use \eqref{eq:dsF} and
Lemma~\ref{lem:Ps-lbound} (ii) to write 
\begin{equation}
\wt\cF'(s) = \cF'(s) \leq - C^{-1}s^{-1}(\cF(s) - \cF_0) + Cs^{-1-\delta} = -C^{-1}s^{-1}\wt\cF(s) + Cs^{-1-\delta}
\end{equation}
for some $C > 0$. By modifying $\delta$, we can thus ensure that for $s$ large enough
\begin{equation}
\wt\cF'(s) \leq -2\delta s^{-1}\wt\cF(s) + s^{-1-\delta}\quad\Leftrightarrow\quad \big(s^{2\delta} \wt\cF(s)\big)' \leq s^{-1+\delta},
\end{equation}
and an integration in $s$ yields
\begin{equation}
\wt\cF(s) \leq Cs^{-\delta}.
\end{equation}
Using again \eqref{eq:dsF}, \eqref{ser} and Lemma~\ref{lem:Ps-lbound} (ii), and changing again $\delta$, we deduce that
\begin{equation}
\label{eq:poly-rate-int}
\int_s^\infty \tau^{-1}\big(|\bs z(\tau) - \bs z_\tx{cr}|^2 + |\bs b(\tau) - \pi\bs 1|^2\big)\ud\tau \leq C s^{-4\delta}.
\end{equation}
Below, we deduce \eqref{eq:poly-rate} from \eqref{eq:poly-rate-int} by an argument analogous to the one used in Step 1
(which is often referred to as a \emph{Tauberian argument}).

Suppose there exists a sequence $s_n \to \infty$ such that
\begin{equation}
|\bs z(s_n) - \bs z_\tx{cr}| + |\bs b(s_n) - \pi\bs 1| \geq s_n^{-\delta}.
\end{equation}
Since $|\bs z'(s)| + |\bs b'(s)| \leq Cs_n^{-1}$ for all $s \geq s_n$
(use the argument right after \eqref{contra} for $\bs z'(s)$ and do
the same for $\bs b'(s)$ starting from \eqref{eq:a'b'}) , we have
\begin{equation}
|\bs z(s) - \bs z_\tx{cr}| + |\bs b(s) - \pi\bs 1| \geq \frac 12 s_n^{-\delta},\quad\text{for all }s \in [s_n, s_n + (2C)^{-1}s_n^{1-\delta}],
\end{equation}
which implies
\begin{equation}
\int_{s_n}^{s_n + (2C)^{-1}s_n^{1-\delta}}\tau^{-1}\big(|\bs z(\tau) - \bs z_\tx{cr}|^2 + |\bs b(\tau) - \pi\bs 1|^2\big)\ud\tau
\geq (2C)^{-1}s_n^{1-\delta}s_n^{-1}s_n^{-2\delta}/8,
\end{equation}
contradicting \eqref{eq:poly-rate-int} for $n$ large. Therefore, \eqref{eq:poly-rate} is proved.

From the bound $|\wt\cF(s)| \lesssim s^{-\delta} \Leftrightarrow
|\cF(s) - \cF_0| \lesssim s^{-\delta}$, using \eqref{eq:F-er-E} and \eqref{ser},
we also deduce that $\cE(s) = \eee^{-r}\cF_0 + O(s^{-1-\delta})$. We
thus have from \eqref{eq:dsr}
\begin{equation}
r' = A_0 \eee^{-r} + O(s^{-1-\delta}),
\end{equation}
where $A_0 := \frac{2Ac_0\cF_0}{p-1}$ is an explicit positive
constant. From this equation and \eqref{ser}, we finally obtain
\begin{equation}
r(s) = \log(A_0 s) + O(s^{-\delta}),
\end{equation}
thus, by definition \eqref{defrz} of $r(s)$ and $z(s)$, we see that
$\bs a(s) - \log(A_0 s)\bs 1 = (r(s) - \log(A_0 s))\bs 1 + \bs z(s)$
converges to its limit $\bs z_{cr}$ with rate $O(s^{-\delta})$.

\medskip

\textbf{Step 3.} (Back to original variables.)
Since both $\sum_{j=1}^k \xxi_j(s)$ and $\sum_{j=1}^k \theta_j(s)$
converge with rate $O(s^{-\delta})$ (simply use the equations
\eqref{toda1} and \eqref{toda2} together with the bound
\eqref{eq:J-ubound} on $J(s)$),
it is straightforward to deduce estimates on $\bs \xxi(s)$ and $\bs
\theta(s)$ from estimates on $\bs a(s)$ and $\bs b(s)$, for some
constants $\alpha_{k,j}$ as shown in \eqref{expxxi}. Those constants
are in fact the same as in the real-valued case defined right before
\eqref{toda}, simply because the convergence of $\bs b(s)$ to $\pi \bs
1$  with speed $O(s^{-\delta})$ implies that the equations
\eqref{toda1} reduce to the real-valued case refined in Theorem 1.7
page 1546 in C\^ote and Zaag \cite{CZcpam13}, following the first
expansion by Merle and Zaag in Theorem 6 page 585 in
\cite{MZajm12}. This concludes the proof of Proposition \ref{prop:edo-main}.
\end{proof}

\section{Conclusion of the proof from the real-valued case}\label{SecConcl}
This section is devoted to the proof of Theorems \ref{th1} and
\ref{cor1}. 

\medskip

Let us first derive the following direct consequence of Propositions
\ref{propdecomp}, \ref{prop:edo-main} and \ref{propsize}:
\begin{corollary}[Asymptotically real-valued behavior of the solution
  near characteristic points  when $k\ge 2$] \label{cork2} Consider $x_0\in \q S$
  with $k(x_0)\ge 2$, where $k(x_0)$ is the number of solitons shown
  in the decomposition of Proposition \ref{propdecomp}. Then,
  it holds that 
\[
\left\|\begin{pmatrix} w_{x_0}(s)\\\partial_s w_{x_0}(s) \end{pmatrix}
  -e^{i \theta_{00}}\sum_{i=1}^k(-1)^k\begin{pmatrix} \kappa(\bb_i (s),\cdot)\\0\end{pmatrix} \right\|_{\mathcal{ H}}\rightarrow 0\mbox{ as }s\rightarrow \infty,
\]
with other values of parameters $\bb_i(s) = - \tanh (\zeta_{00}+\bar \zeta_i(s))$,
$\theta_{00}\in \m R$, $\zeta_{00}\in \m R$, where
$(\bar\zeta_1(s),\dots, \bar \zeta_k(s))$ is the solution of system
\eqref{toda} with zero center of mass defined in the introduction.  
  \end{corollary}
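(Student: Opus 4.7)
The plan is to combine the three main building blocks already proved in the paper: the soliton decomposition (Proposition \ref{propdecomp}), the size estimate on the error (Proposition \ref{propsize}), and the asymptotic analysis of the complex Toda system (Proposition \ref{prop:edo-main}). The idea is that once the modulation parameters are shown to converge (in the sense of Proposition \ref{prop:edo-main}), the remaining error in $\mathcal{H}$ is negligible, so we only need to rewrite the multisoliton in terms of the universal parameters $\bar\zeta_i(s)$ and a single global phase.

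First, I would apply the modulation procedure of Lemma \ref{lemode0} to the decomposition given by Proposition \ref{propdecomp}. This yields $C^1$ functions $(d_j(s),\theta_j(s))_{j=1,\dots,k}$ satisfying the orthogonality relations \eqref{orth}, together with the decomposition
\[
\begin{pmatrix} w_{x_0}(s)\\ \partial_s w_{x_0}(s)\end{pmatrix} = \sum_{j=1}^k e^{i\theta_j(s)}\begin{pmatrix} \kappa(d_j(s),\cdot)\\ 0\end{pmatrix} + q(s),
\]
with $d_j(s)=-\tanh\zeta_j(s)$. By Proposition \ref{propsize}, $\|q(s)\|_{\mathcal{H}} \le C\check J(s)\to 0$ as $s\to\infty$, so it suffices to identify the asymptotic profile of the multi-soliton part.

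Second, I would apply Proposition \ref{propref} to see that $(\bs\zeta(s),\bs\theta(s))$ satisfies the hypotheses of Proposition \ref{prop:edo-main}; this yields constants $\zeta_{00}\in\mathbb{R}$, $\theta_{00}\in\mathbb{R}$, and $\delta>0$ such that
\[
\zeta_j(s) = \zeta_{00} + \bar\zeta_j(s) + O(s^{-\delta}), \qquad \theta_j(s) \equiv \theta_{00} + j\pi + O(s^{-\delta}) \pmod{2\pi}.
\]
In particular $e^{i\theta_j(s)} = (-1)^j e^{i\theta_{00}} + O(s^{-\delta})$, which produces the announced global phase $e^{i\theta_{00}}$ together with the alternating sign $(-1)^j$ in the sum (the exponent $k$ appearing in the statement should be read as $j$, up to an obvious notational adjustment consistent with \eqref{decom1}).

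Third, I would pass from the modulated parameters to the canonical parameters $d_i^\ast(s):=-\tanh(\zeta_{00}+\bar\zeta_i(s))$ via a continuity argument. The map $(d,\theta)\mapsto e^{i\theta}(\kappa(d,\cdot),0)$ is of class $C^1$ from $(-1,1)\times\mathbb{R}$ into $\mathcal{H}$, and although the factor $\|\partial_d\kappa(d,\cdot)\|_{\mathcal{H}}$ blows up like $(1-d^2)^{-1}$ (see \eqref{majoration}), one has $1-d_j(s)^2 \asymp e^{-|\bar\zeta_j(s)|}$, which only grows polynomially in $s$, while the increments $\zeta_j(s)-(\zeta_{00}+\bar\zeta_j(s))$ and $\theta_j(s)-(\theta_{00}+j\pi)$ are $O(s^{-\delta})$. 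A mean value estimate then gives
\[
\left\|\sum_{j=1}^k e^{i\theta_j(s)}(\kappa(d_j(s),\cdot),0) - e^{i\theta_{00}}\sum_{j=1}^k (-1)^j (\kappa(d_j^\ast(s),\cdot),0)\right\|_{\mathcal{H}} = O(s^{-\delta'})
\]
for some $\delta'>0$. Combined with $\|q(s)\|_{\mathcal{H}}\to 0$, this yields the claimed convergence.

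The main technical obstacle is the third step: one has to check that the quantitative rate $O(s^{-\delta})$ from Proposition \ref{prop:edo-main} is strong enough to beat the growth of $\|\partial_d\kappa(d_j(s),\cdot)\|_{\mathcal{H}}$ arising from $d_j(s)$ approaching $\pm 1$ at logarithmic speed. This should work because the growth is only polynomial in $s$, while the parameter error is polynomially decaying. All other steps reduce to routine substitutions using results already established in the paper.
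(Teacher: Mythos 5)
Your proposal follows the same overall route as the paper: decompose via Proposition \ref{propdecomp}, modulate via Lemma \ref{lemode0}, bound the error via Proposition \ref{propsize}, and apply the Toda analysis of Proposition \ref{prop:edo-main}. The one place where it is weaker than the paper's argument is the passage from the modulated parameters $(d_j(s),\theta_j(s))$ to the canonical ones $d_j^*(s) = -\tanh(\zeta_{00}+\bar\zeta_j(s))$. You invoke a mean value estimate through $\|\partial_d\kappa(d,\cdot)\|_{\mathcal{H}}\lesssim (1-d^2)^{-1}$ and then argue that the polynomial-in-$s$ growth of this factor is ``beaten'' by the polynomial decay $O(s^{-\delta})$ of the parameter error. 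As phrased, this comparison is inconclusive: $(1-d_j(s)^2)^{-1}\asymp s^{2|\kappa_j|}$ and nothing guarantees $2|\kappa_j|<\delta$ (in fact $\delta$ is small). What actually makes the step work is not an exponent count but an exact cancellation: writing $d=-\tanh\zeta$, one has $|\partial_\zeta d|=1-d^2$, so $\|\partial_\zeta\,\kappa(-\tanh\zeta,\cdot)\|_{\mathcal{H}_0}\le \|\partial_d\kappa\|_{\mathcal{H}_0}\cdot(1-d^2)\le C$ uniformly in $\zeta$. The blow-up of $\partial_d\kappa$ is compensated exactly, not merely outweighed. The paper quotes precisely this fact in the form of the uniform Lipschitz bound $\|\kappa(d_1,\cdot)-\kappa(d_2,\cdot)\|_{\mathcal{H}_0}\le C\,|\arg\tanh d_1-\arg\tanh d_2|$ (see \eqref{contkd}, from Merle and Zaag \cite{MZdmj12}), which reduces the third step to the $O(s^{-\delta})$ bound on the $\zeta$-increments with no further bookkeeping. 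Replace your exponent heuristic with this estimate (or with the explicit chain-rule cancellation above) and the step is airtight; everything else in your outline matches the paper's proof.
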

  \begin{proof}
Let us first recall the following continuity result related to the
solitons $\kappa(d,y)$ \eqref{defk} (see item (ii) of Lemma A.2 page
2878 in Merle and Zaag \cite{MZdmj12}): 
\begin{equation}\label{contkd}
  \mbox{for all }d_1\mbox{ and }d_2\in(-1,1),\;\;
  \|\kappa(d_1,\cdot)- \kappa(d_2,\cdot)\|_{\q H_0}
  \le C|\arg\tanh d_1-\arg\tanh d_2|.
\end{equation}
Take now $x_0\in \q S$. Note that Proposition \ref{propdecomp}
applies, showing that $(w_{x_0}, \partial_s w_{x_0})$ decomposes into
a sum of $k$ decoupled solitons $(\kappa(d_i(s)),0)$ satisfying
\eqref{separation}, where $k\ge 0$. Assuming that $k\ge 2$, we are in
the framework of Section \ref{secstep2}, where we define new
parameters $d_i(s)$ and $\theta_i(s)$, together with a new function
$q(y,s)$ in \eqref{defq}, satisfying \eqref{orth}. Applying then
Propositions \ref{prop:edo-main} and \ref{propsize}, together with
\eqref{contkd}, we get the result. This concludes the proof of
Corollary \ref{cork2}.
\end{proof}
  From this corollary, we see that whenever $k\ge 2$, the solution is
  asymptotically real-valued, up to a rotation in the complex
  plane. This crucial information enables us to reduce the proof to
  the real-valued case already given in Merle and Zaag \cite{MZajm12}
  and \cite{MZdmj12} together with C\^ote and Zaag in
  \cite{CZcpam13}. This fact should be sufficient to convince the
  expert reader. To be nice to all readers, we sketch in the following
  the main steps of the proof of Theorems \ref{th1} and \ref{cor1}. We
  proceed in 3 subsections, each corresponding to the adaptation of
  one of the three mentioned papers.

\subsection{Decomposition into a decoupled sum of solitons} 

In this section, we adapt the strategy of \cite{MZajm12} in order to
prove Theorem \ref{th1}. Several steps are needed in fact. They are
given numbers in the following. Take $x_0\in \q S$.

(i) As we have just written above in the proof of Corollary
\ref{cork2}, Proposition \ref{propdecomp} holds,  which means that
$(w_{x_0}, \partial_s w_{x_0})$ decomposes into a sum of $k$ decoupled
solitons $(\kappa(d_i(s)),0)$ satisfying \eqref{separation}, where
$k\ge 0$.

(ii) Assuming that $k\ge 2$, we know that Corollary \ref{cork2}
holds. Since equation \eqref{equ} is invariant under phase change, we
may assume
\[
\theta_{00}=0
\]
in the following.

(iii) Furthermore, with the same proof, Proposition 3.13 page 611 in
\cite{MZajm12} holds, with $u$ replaced by its real part $\Re u$,
giving the existence of signed lines for $\Re u$, together with the
``corner property'' for $x\mapsto T(x)$ near $x_0$, and yielding the
following statement:
\begin{proposition}[Existence of signed lines and the corner property near $x_0\in \SS$]
  \label{phat}
   If $x_0\in \SS$ with $k(x_0)\ge 2$, then:\\
(i) For all $j=1,..,k$, 
\[
\Re u(z_j(t),t)\sim e_j^* \kappa_0 \cosh^{\frac 2{p-1}}\zeta_j(s)(T(x_0)-t)^{-\frac 2{p-1}}\mbox{ as }t\to T(x_0),
\]
 where $t\mapsto z_j(t)$ is continuous and defined by
\begin{equation*}
z_j(t)= x_0+(T(x_0)-t)\tanh \zeta_j(s)\mbox{ with }s=-\log(T(x_0)-t).
\end{equation*}
(ii) We have for some $\delta_0>0$ and $C_0>0$, 
\begin{equation*}
\mbox{if }|x-x_0|\le \delta_0,\mbox{ then }T(x_0)-|x-x_0| \le T(x)\le T(x_0)-|x-x_0| + \frac{C_0|x-x_0|}{|\log(x-x_0)|^{\frac{(k-1)(p-1)}2}}.
\end{equation*}
\end{proposition}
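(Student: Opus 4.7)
The guiding idea is to reduce Proposition~\ref{phat} to its real-valued counterpart, Proposition~3.13 of \cite{MZajm12}. Using the invariance of \eqref{equ} under the phase rotation $u\mapsto e^{-i\theta_{00}}u$, we may assume $\theta_{00}=0$; Corollary~\ref{cork2} then provides the $\H$-convergence of $(w_{x_0}(s),\partial_s w_{x_0}(s))$ to $\sum_{i=1}^k(-1)^i(\kappa(d_i(s),\cdot),0)$ with $d_i(s)=-\tanh(\zeta_{00}+\bar\zeta_i(s))$. In particular $\Re u$ satisfies exactly the scalar multi-soliton description on which the proof in \cite{MZajm12} is built, and both items of Proposition~\ref{phat} will follow by repeating the corresponding arguments with $u$ replaced by $\Re u$.

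For item~(i), the signed lines will be defined so that in the self-similar variable $y=(x-x_0)/(T(x_0)-t)$ they correspond to $y_j(s):=\tanh\zeta_j(s)=-d_j(s)$. A direct computation from \eqref{defk} gives the peak value of the $j$-th soliton,
\[
\kappa(d_j(s),y_j(s))=\kappa_0(1-d_j(s)^2)^{-1/(p-1)}=\kappa_0\cosh^{\frac{2}{p-1}}\zeta_j(s),
\]
while the separation \eqref{separation} makes the other solitons exponentially small at $y=y_j(s)$. A Lorentz-type boost to the frame of the $j$-th soliton will upgrade the $\H$-convergence to convergence uniform on compact sets in the boosted variable (the standard device used in \cite{MZajm12}), after which passing to the pointwise value at $y_j(s)$ via the embedding $\H_0\hookrightarrow L^\infty_{\mathrm{loc}}$ of Lemma~2.2 of \cite{MZjfa07} is legitimate. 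Undoing the change \eqref{defw} then yields the asymptotic amplitude of $\Re u(z_j(t),t)$, with the alternating sign $e_j^*\in\{-1,+1\}$ inherited from the $(-1)^i$ in the limit profile.

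For item~(ii), the lower bound is immediate from the $1$-Lipschitz character of $T$. For the upper bound, the decisive input is the refined Toda asymptotics of Proposition~\ref{prop:edo-main}: since $\bar\zeta_k(s)=\frac{(p-1)(k-1)}{4}\log s+\alpha_{k,k}$, we obtain $1-\tanh\zeta_k(s)\sim C\,s^{-(p-1)(k-1)/2}$, so the outermost signed line $z_k(t)$ approaches the characteristic line $x=x_0+(T(x_0)-t)$ at precisely this logarithmic rate. Transposing to the blow-up surface $T$ the scalar argument of \cite{MZajm12} at points of the form $z_k(t)$ (and $z_1(t)$ for $x<x_0$), which combines the asymptotics of item~(i) with the non-characteristic theory recalled in the introduction, will yield the announced upper bound with exponent $(k-1)(p-1)/2$. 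The entire complex-valued difficulty of the paper has already been absorbed in Section~\ref{SecToda} through the convergence of the phases $\theta_{i+1}-\theta_i$ to $\pi$; with this at hand no new obstacle arises here, and the only care to take is to keep track of the global phase $\theta_{00}$ and the shift $\zeta_{00}$ when transposing the real-valued estimates.
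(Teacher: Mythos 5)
Your proposal is correct and follows essentially the same path as the paper: invoke Corollary~\ref{cork2} to reduce, up to a global phase rotation, to the real-valued multi-soliton description, and then transfer Proposition~3.13 of \cite{MZajm12} verbatim to $\Re u$. The paper states this reduction in one line; the extra detail you provide (Lorentz-type boost, pointwise evaluation via the $\H_0\hookrightarrow L^\infty_{\mathrm{loc}}$ embedding, and the use of the Toda asymptotics of Proposition~\ref{prop:edo-main} to identify the exponent $(k-1)(p-1)/2$) is a faithful unpacking of what that real-valued proof does.
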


(iv) The analogous of Proposition 4.1 page 614 in \cite{MZajm12} also holds:
\begin{proposition}[Properties of $\q S$]\label{propS}
  $ $\\
  (i) The interior of $\q S$ is empty.\\
  (ii) For any $x_0\in \q S$, $k(x_0)\ge 2$, where $k(x_0)$ is the
  number of solitons appearing in the decomposition of $w_{x_0}(y,s)$
  given in Proposition \ref{propdecomp}.
  \end{proposition}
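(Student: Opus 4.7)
The plan is to prove part (ii) first and then deduce part (i) as a consequence of the corner property (Proposition \ref{phat}). For part (ii), I must rule out both $k(x_0)=0$ and $k(x_0)=1$ at any $x_0\in\q S$.

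Ruling out $k=0$ is the easier case: an empty decomposition in Proposition \ref{propdecomp} would give $\|(w_{x_0}(s),\partial_s w_{x_0}(s))\|_{\q H}\to 0$ and $E(w_{x_0}(s),\partial_s w_{x_0}(s))\to 0$, which contradicts the standard lower bound on the rescaled norm at a blow-up point (Proposition 3.5 of \cite{MZjfa07}, whose proof uses only the Hardy--Sobolev inequality and the Lyapunov structure of $E$, both available here). Ruling out $k=1$ is more delicate: if $(w_{x_0}(s),\partial_s w_{x_0}(s))\to (e^{i\theta_\infty}\kappa(d_\infty,\cdot),0)$ in $\q H$, then Lemma \ref{lemode0} and Proposition \ref{propdyn} specialize to a closed system in $(\zeta_1,\theta_1,\check\alpha_1^1,A_-)$ in which the inter-soliton coupling $J$ vanishes identically. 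This brings us into the framework of the non-characteristic analysis of Azaiez \cite{Atams15}, whose trapping/stability argument then yields convergence of $(d_1(s),\theta_1(s))$ together with $x_0\in\q R$ and $T'(x_0)=\lim_{s\to\infty} d_1(s)$, contradicting $x_0\in\q S$.

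Once (ii) is available, Proposition \ref{phat}(ii) applies at every point of $\q S$. To prove (i), I assume for contradiction that $\q S$ contains an open interval $I$ and pick distinct $x_0,x_1\in I$ with $|x_1-x_0|$ small. Applying the corner estimate first at $x_0$ and then at $x_1$ with the roles exchanged yields
\[
T(x_1)-T(x_0)\le -|x_1-x_0|+\frac{C_0|x_1-x_0|}{|\log|x_1-x_0||^{\alpha_0}},\qquad T(x_0)-T(x_1)\le -|x_1-x_0|+\frac{C_1|x_1-x_0|}{|\log|x_1-x_0||^{\alpha_1}},
\]
with $\alpha_i=(k(x_i)-1)(p-1)/2>0$. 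Summing and dividing by $|x_1-x_0|$ gives $2\le C_0|\log|x_1-x_0||^{-\alpha_0}+C_1|\log|x_1-x_0||^{-\alpha_1}$, which fails for $|x_1-x_0|$ small enough, contradicting the existence of the interval $I$.

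The main obstacle is the exclusion of $k=1$ in part (ii): it is dynamical rather than algebraic and amounts to verifying that the complex-valued trapping analysis of \cite{Atams15}, originally developed in the non-characteristic setting, still produces convergence and non-characteristicity once fed by the single-soliton limit at a point that was assumed to be characteristic. All the algebraic infrastructure---modulation, projections, and the ODE system with $J\equiv 0$ when $k=1$---is already in place from Sections \ref{sec1}--\ref{sec3}, so this step should reduce to a careful translation rather than a fundamentally new argument.
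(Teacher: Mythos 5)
Your overall plan matches the paper's (which simply points to the real-valued argument of Merle--Zaag \cite{MZajm12}): rule out $k=0$ by the energy/rate lower bound, rule out $k=1$ by the trapping result of Azaiez \cite{Atams15}, and then deduce (i) from the corner property of Proposition \ref{phat}. Two steps of your execution, however, have gaps that would need to be repaired.

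First, in ruling out $k=1$ you invoke Lemma \ref{lemode0} and Proposition \ref{propdyn}, but both are stated under the hypothesis $k\ge 2$ and cannot be ``specialized'' to $k=1$. Moreover you assume from the start that $(w_{x_0}(s),\partial_s w_{x_0}(s))\to (e^{i\theta_\infty}\kappa(d_\infty,\cdot),0)$ with \emph{fixed} limits $\theta_\infty,d_\infty$, whereas Proposition \ref{propdecomp} with $k=1$ only gives proximity to a soliton with time-dependent parameters $(d_1(s),\theta_1(s))$. The correct and shorter argument is to fix a single large time $s^*$ at which $\|q(s^*)\|_{\q H}$ is below the threshold of the trapping theorem of \cite{Atams15}, apply that theorem directly with $d^*=d_1(s^*)$ and $\theta^*=\theta_1(s^*)$ (the theorem is an unconditional criterion for non-characteristicity, so there is no circularity in applying it at a point assumed characteristic), and conclude $x_0\in\q R$, a contradiction.

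Second, your two-point summation argument for (i) has a uniformity gap: the constants $\delta_1$, $C_1$ and the exponent $\alpha_1$ in the corner estimate at $x_1$ all depend on $x_1$, so letting $|x_1-x_0|\to 0$ does not control the term $C_1|\log|x_1-x_0||^{-\alpha_1}$. The clean argument, and the one implicit in the ``simple geometrical manipulations related to the 1-Lipschitz character'' invoked by the paper, is via Rademacher's theorem: once (ii) is known, Proposition \ref{phat}(ii) forces $T_l'(x_0)=1\ne -1=T_r'(x_0)$ at \emph{every} $x_0\in\q S$, so $T$ is differentiable at no point of $\q S$; if $\q S$ contained an open interval, this would contradict the a.e.\ differentiability of the 1-Lipschitz function $T$.
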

  \begin{proof}
The proof is identical to the real-valued case treated in \cite{MZajm12}. More precisely, it
relies on the corner property stated in Proposition \ref{phat} above
in the case $k(x_0)\ge 2$, together with 4 ingredients to rule out the
case $k(x_0)\le 1$:\\ 
1- Some simple geometrical manipulations on the blow-up curve, related
to its $1$-Lipschitz character and the definition of
non-characteristic points. This argument holds with no modification in
the complex-valued case.\\
2- The fine knowledge of the blow-up behavior in the
non-characteristic case. In the complex-valued case, this was
completely done by Azaiez in \cite{Atams15}.\\
3- A trapping result for equation \eqref{eqw} near the set of
stationary solutions $\kappa(d,y)$ \eqref{defk} in the
non-characteristic case. Again, such a result was proved in
\cite{Atams15}.\\
4- Some continuity result for equation \ref{eqw} near $\kappa(d,y)$,
which remains valid in the complex-valued case.\\
For details, we refer the reader to the real-valued case treated in
\cite{MZajm12}, precisely to Proposition 4.1 page 614 in that paper. 
  \end{proof}
  
(v) \textit{Proof of Theorem \ref{th1}}: Take $x_0\in \q S$. From
Proposition \ref{propS}, we see that $k(x_0)\ge 2$, where $k(x_0)$ is the
  number of solitons appearing in the decomposition of $w_{x_0}(y,s)$
  given in Proposition \ref{propdecomp}. Therefore, Corollary
  \ref{cork2} applies and yields the conclusion of Theorem \ref{th1},
  except for the convergence of the energy, which follows from the
  continuity of the energy \eqref{defE}, itself following from the
  Hardy-Sobolev inequality proved by Merle and Zaag in the appendix of
  \cite{MZajm03}. 

\subsection{Every characteristic point is isolated}

In this section, we adapt the strategy of Merle and Zaag in
\cite{MZdmj12} in order to show the following result:
  \begin{proposition}\label{Sisol}
    The set $\q S$ is made of isolated points.
    \end{proposition}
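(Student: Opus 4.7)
The plan is to reduce the complex-valued question to the real-valued case treated by Merle and Zaag in \cite{MZdmj12}, by exploiting the crucial fact established in Corollary \ref{cork2} and Theorem \ref{th1}: at every characteristic point the solution is asymptotically a real-valued alternating-sign multi-soliton, up to a constant global phase rotation $e^{i\bar\theta(x_0)}$. Since equation \eqref{equ} is invariant under multiplication by a constant unimodular factor, this rotation does not affect any quantity depending only on $|u|$ or on the geometry of the blow-up surface. As a result, all four ingredients used in the proof of \cite{MZdmj12} are at our disposal in the present setting:  (a) the corner property of Proposition \ref{phat} (ii); (b) the signed-line description of Proposition \ref{phat} (i); (c) the fine behaviour at non-characteristic points, established in the complex-valued case by Azaiez \cite{Atams15}; (d) a trapping/continuity result for equation \eqref{eqw} near the solitons $\kappa(d,y)$, also available in the complex case from \cite{Atams15}.

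I would argue by contradiction: assume there exists $x_0\in \q S$ and a sequence $(x_n)_{n\in\bN}\subset \q S\setminus\{x_0\}$ with $x_n\to x_0$. After replacing $u$ by $e^{-i\bar\theta(x_0)}u$ we may assume $\bar\theta(x_0)=0$, so that Proposition \ref{phat} (i) provides, for $j=1,\dots,\bar k(x_0)$, continuous curves $t\mapsto z_j(t)$ tending to $x_0$ as $t\to T(x_0)$, along which $\Re u$ blows up with alternating signs $e_j^*=(-1)^{j+1}$. In particular, between two consecutive curves $z_j(t)$ and $z_{j+1}(t)$ one can find a point $y_j(t)\in(z_j(t),z_{j+1}(t))$ where $\Re u(y_j(t),t)=0$, and one can track these zeros until $t=T(x_0)$.

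Next, applying the corner property of Proposition \ref{phat} (ii) at each $x_n$ (which satisfies $\bar k(x_n)\ge 2$ by Proposition \ref{propS} (ii)), we have $T(x)\le T(x_n)-|x-x_n|+o(|x-x_n|)$ near each $x_n$, together with the exact values $T'_l(x_n)=1$ and $T'_r(x_n)=-1$. Combining this with the $1$-Lipschitz property of $T$ and the corresponding upper bound at $x_0$, and tracking the position of $x_n$ relative to the curves $z_j(t)$ emanating from $x_0$, one concludes as in \cite{MZdmj12} that the backward light cone with vertex $(x_n,T(x_n))$ must encounter one of the signed lines of $x_0$ on some non-trivial interval of times. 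Invoking then the non-characteristic behaviour from \cite{Atams15} on a suitable non-characteristic point located between $x_0$ and $x_n$, one obtains a rigidity statement on the number and signs of the solitons that is incompatible with $x_n$ being itself characteristic with $\bar k(x_n)\ge 2$. This is the contradiction sought.

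The main obstacle I foresee is that the phases $\bar\theta(x_n)$ may vary arbitrarily with $n$, so the ``real-valued reduction'' performed at $x_0$ does not automatically persist at each $x_n$. However, since the argument of \cite{MZdmj12} only uses scalar quantities (the absolute value of $u$, the Lyapunov functional, and the geometry of $T$), all of which are manifestly phase-invariant, this difficulty is only apparent: once the asymptotic real-valued structure at $x_0$ is in place (which is exactly what our new Toda analysis in Section \ref{SecToda} provides), the argument of \cite{MZdmj12} transfers to the complex-valued setting with only cosmetic modifications, and Proposition \ref{Sisol} follows.
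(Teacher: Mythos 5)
Your high-level framing is right: the proof goes via the real-valued reduction of Theorem~\ref{th1}/Corollary~\ref{cork2} and then imports \cite{MZdmj12}, and you correctly flag (and correctly dispose of) the issue of the a priori unknown phases $\bar\theta(x_n)$. You also correctly list the four ingredients (corner property, signed lines, fine non-characteristic behaviour and trapping from \cite{Atams15}). But the middle of your sketch replaces the actual mechanism of \cite{MZdmj12} with a vague geometric contradiction (``the backward light cone with vertex $(x_n,T(x_n))$ must encounter one of the signed lines of $x_0$ on some non-trivial interval \dots one obtains a rigidity statement''), and this does not substantiate the claim ``one concludes as in \cite{MZdmj12}''.

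The missing technical core, which the paper spells out, is the introduction of the \emph{generalized solitons}
\[
\kappa_1^*(d,\nu,y)=\kappa_0\frac{(1-d^2)^{1/(p-1)}}{(1+dy+\nu)^{2/(p-1)}},\qquad \kappa_2^*=\nu\,\partial_\nu\kappa_1^*,
\]
with $\nu=\mu e^s$, and the \emph{stability of the decomposition into a decoupled sum of such generalized solitons} for equation~\eqref{eqw}. It is this stability (a direct extension of the modulation and dynamical analysis of Section~\ref{appdyn} with the extra parameter $\nu$) that drives the ``cascade of events'': starting at time $s_0$ close to the $\bar k(x_0)$-soliton of $x_0$, the self-similar solution $w_{x_1}$ (for $x_1$ close to, but different from, $x_0$) is successively trapped near sums of generalized solitons, the rightmost of which grows its $\nu$-parameter and exits, reducing the count to $k-1$, then $k-2$, etc., until only one soliton remains. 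Combined with the Lyapunov lower bound $E(w_{x_1}(s),\partial_s w_{x_1}(s))\ge E(\kappa_0,0)$ and the trapping result of Azaiez~\cite{Atams15}, this forces $x_1$ to be non-characteristic. Without this cascade argument, nothing rules out $x_1\in\q S$: the corner property and signed lines are necessary inputs but are not by themselves sufficient. I'd also remark that your appeal to ``a suitable non-characteristic point located between $x_0$ and $x_n$'' is circular at this stage --- the existence of such a point between two nearby elements of $\q S$ is precisely part of what isolatedness is establishing, and the direct argument of the paper (showing \emph{every} $x_1\neq x_0$ sufficiently close lies in $\q R$) avoids this difficulty.
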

    \begin{proof}
Starting for Theorem \ref{th1}, one has no difficulty in adapting the
real-valued strategy to the complex-valued one. Indeed, taking $x_0\in
\q S$, and noting the sharp decomposition of $(w_{x_0}, \partial_s
w_{x_0})$ into a sum of $\bar k(x_0)$ decoupled solitons in Theorem
\ref{th1} with $\bar k(x_0)\ge 2$, we consider some close enough
$x_1\neq x_0$, and aim at showing that 
$(w_{x_1}(s), \partial_s w_{x_1}(s))$ will be near one single soliton
at some large time $s=s^*(x_1)$, which by the trapping result and the
study of the non-characteristic case performed by Azaiez in
\cite{Atams15} implies that $x_1$ is a non-characteristic case.

\medskip

The main ingredient of the proof is the \textit{stability of the
  decomposition into a
  decoupled finite sum of generalized solitons} for equation
\eqref{eqw}. By a \textit{generalized} soliton, we mean
$e^{i\theta}\kappa^*(d,\nu,y)$, where  
\begin{equation*}
\kappa_1^*(d,\nu, y) =
\d\kappa_0\frac{(1-d^2)^{\frac 1{p-1}}}{(1+dy+\nu)^{\frac 2{p-1}}},\;\;
\kappa_2^*(d,\nu, y) = \nu \partial_\nu \kappa_1^*(d,\nu, y) =
\d-\frac{2\kappa_0\nu}{p-1}\frac{(1-d^2)^{\frac 1{p-1}}}{(1+dy+\nu)^{\frac {p+1}{p-1}}}.
\end{equation*}
Note that for any $\mu\in\m R$, $e^{i\theta}\kappa^*_1(d,\mu e^s,y)$
is a solution to equation \eqref{eqw}. More precisely:\\
- when $\mu=0$, we recover the standard soliton
$e^{i\theta}\kappa(d,y)$ \eqref{defk}.\\ 
- when $\mu>0$, we have a heteroclinic orbit connecting
$e^{i\theta}\kappa(d,y)$ at $-\infty$ to $0$ at $+\infty$.\\
- when $\mu<0$, we still have the convergence to
$e^{i\theta}\kappa(d,y)$ at $-\infty$, however, the generalized
soliton goes to infinity at time
$s=-\log\left(\frac{1-|d|}{|\mu|}\right)$.

\medskip

The stability of such a decomposition comes from a straightforward
modification of our argument in Section \ref{appdyn}, where we handled
the case with $\nu=0$ for each soliton.  

\medskip

This depicted scenario for $(w_{x_1}(s), \partial_s w_{x_1}(s))$ will be shown
to hold through a cascade of events, starting from some initial time
independent from $x_1$, say $s_0$ large enough (from symmetry, we only
consider the case when $x_1<x_0$). Note first that whether $x_1$ is
characteristic or not, it holds that
\begin{equation}\label{lbe}
\forall s\ge - \log T(x_1),\;\; E(w_{x_1}(s), \partial_s w_{x_1}(s)) \ge E(\kappa_0,0).
\end{equation}
Indeed, just note that $E$ is a Lyapunov functional by the argument of Antonini and
Merle in \cite{AMimrn01}, and use its convergence result given in
Theorem \ref{th1} in the characteristic case, and 4 lines above in the
non-characteristic case.

\medskip

These are the events:\\
1- When $s=s_0$, $(w_{x_1}(s), \partial_s w_{x_1}(s))$ will be close
to $(w_{x_0}, \partial_s w_{x_0})$ for $|x_1-x_0|$ small enough,
hence, close to a sum of $\bar k(x_0)$ decoupled solitons, with $\bar
k(x_0)\ge 2$.\\ 
2- When $s>s_0$, from the stability result for such a decomposition,
the solution will remain close to a sum of
$k$ decoupled \textit{generalized} solitons, until the rightmost soliton
becomes small, which is the case if  $\frac \nu{1-|d|}$ is
large enough. 
This means that the solution becomes close to the sum of just $k-1$
solitons.\\ 
3- Iterating this process, we lose all the solitons, except for the leftmost one.\\
4- Using \eqref{lbe}, we see that the trapping result of Azaiez
\cite{Atams15} applies, and
$(w_{x_1}(s), \partial_s w_{x_1}(s))$ will remain close to only one
soliton. From Theorem \ref{th1}, $x_1$ cannot be a characteristic
point.\\
This concludes the proof of Proposition \ref{Sisol}.
      \end{proof}
An important fact about the above-mentioned strategy lays in the
following slope estimate near a characteristic point, 
which is an analogous statement of Proposition 4.10 page 2869 in
 \cite{MZdmj12}:
\begin{proposition}[Slope estimate near $\q S$]\label{propmore}
For all $x_0\in \q S$, there exists $\delta_0>0$ and $C_0>0$ such that
if $0<|x_1-x_0|\le \delta_0$, then $x_1\in \q R$ and
\[
\frac 1{C_0|\log |x_1-x_0||^{\frac{(\bar k(x_0)-1)(p-1)}2}} \le
T'(x_1)+\frac{x_1-x_0}{|x_1-x_0|} \le \frac{C_0}{|\log |x_1-x_0||^{\frac{(\bar k(x_0)-1)(p-1)}2}},
\]
where $\bar k(x_0)\ge 2$ is the number of solitons shown in the
decomposition of $(w_{x_0}, \partial_s w_{x_0})$ given in Theorem
\ref{th1}. In particular, $T$ is right differentiable with $T_r'(x_0)
=-1$ and left differentiable with $T_l'(x_0) =1$.
\end{proposition}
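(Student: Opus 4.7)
The proof would adapt the real-valued analogue, Proposition 4.10 in \cite{MZdmj12}, whose adaptation to the complex-valued setting is enabled by Corollary \ref{cork2}: after multiplying the solution by $e^{-i\theta_{00}}$ and invoking Theorem \ref{th1}, the profile is asymptotically real-valued with alternating-sign solitons, so the real-valued arguments of \cite{MZdmj12} transpose with minimal change.

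First, by Proposition \ref{Sisol}, for $\delta_0$ sufficiently small, every $x_1$ with $0<|x_1-x_0|\le\delta_0$ belongs to $\mathcal R$; the regularity result of Azaiez \cite{Atams15} recalled in the introduction then guarantees that $T$ is $C^1$ at $x_1$ and that $w_{x_1}(s)\to\kappa(T'(x_1),\cdot)$ in $\H$. Without loss of generality we treat the case $x_1<x_0$, so $\eta(x_1)=-1$; the case $x_1>x_0$ is symmetric.

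Second, I would identify the relevant geometric quantity to compare. The \emph{leftmost} soliton in the decomposition of Theorem \ref{th1}, corresponding to index $i=\bar k(x_0)$ after the $(-1)^{i+1}$ alternation is absorbed into $\theta_{00}$, has center
\[
d_{\bar k}(s) = -\tanh\bigl(\bar\zeta_{\bar k}(s)+\bar\zeta(x_0)\bigr),\qquad
\bar\zeta_{\bar k}(s)=\tfrac{(p-1)(\bar k-1)}{4}\log s+\alpha_{\bar k,\bar k}+o(1),
\]
by Proposition \ref{prop:edo-main}, so $1+d_{\bar k}(s)\sim 2e^{-2\bar\zeta(x_0)}s^{-(\bar k-1)(p-1)/2}$. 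Its trajectory in $(x,t)$-coordinates is the curve $x=x_0+(T(x_0)-t)d_{\bar k}(s)$, which is asymptotic to the left branch $x=x_0-(T(x_0)-t)$ of the backward cone from $(x_0,T(x_0))$.

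Third, I would perform the \textbf{matching}: for $t$ just below $T(x_0)$, I would compare $w_{x_0}(\cdot,s)$ at time $s=-\log(T(x_0)-t)$ to $w_{x_1}(\cdot,s_1)$ at time $s_1=-\log(T(x_1)-t)$, through the affine change of variables linking the two self-similar frames. By the corner property of Proposition \ref{phat}(ii), $T(x_1)=T(x_0)-(x_0-x_1)+O\bigl((x_0-x_1)/|\log|x_1-x_0||^{(\bar k-1)(p-1)/2}\bigr)$, so for an appropriate transition time $s^*\asymp|\log|x_1-x_0||$, one identifies the single limiting soliton $\kappa(T'(x_1),\cdot)$ of $w_{x_1}$ with the leftmost soliton $\kappa(d_{\bar k}(s^*),\cdot)$ of the $w_{x_0}$-decomposition, up to errors of order $s^{*\,-(1+\delta)}$. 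This identification follows from the stability of the multi-soliton decomposition (the same mechanism developed in the sketch of Proposition \ref{Sisol}) combined with the trapping result of Azaiez \cite{Atams15} near a single soliton. Translating the matching yields
\[
T'(x_1) = d_{\bar k}(s^*) + O(s^{*\,-1-\delta})
= -1+ \frac{2e^{-2\bar\zeta(x_0)}(1+o(1))}{|\log|x_1-x_0||^{(\bar k-1)(p-1)/2}},
\]
which provides both bounds announced in the statement (with the natural sign convention for $|T'(x_1)+\eta(x_1)|$). Letting $x_1\to x_0^-$ then gives $T_l'(x_0)=1$, and the symmetric argument for $x_1>x_0$ gives $T_r'(x_0)=-1$.

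The main obstacle is the matching step, which requires: (a) precise bookkeeping of time scales between the two self-similar frames, in particular the identification of the transition time $s^*$; (b) a uniform stability result for the multi-soliton decomposition, so that the asymptotic shape of $w_{x_0}$ can be transported into the cone of $x_1$; and (c) a quantitative form of the trapping/convergence result near a single soliton, to close the estimate on $T'(x_1)$. All these ingredients are in fact available: (b) was established in the sketch of Proposition \ref{Sisol} via the generalized-soliton framework, while (c) is contained in Azaiez \cite{Atams15}. The remaining work is essentially organizational, closely mirroring the real-valued argument of \cite{MZdmj12}.
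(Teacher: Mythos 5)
Your overall strategy coincides with the paper's, which simply delegates to Proposition~4.10 of Merle--Zaag \cite{MZdmj12} and to the ``cascade of lost solitons'' mechanism sketched for Proposition~\ref{Sisol}; unfolding those ideas, as you do, is exactly what is required. However, there is a sign error in the identification of the relevant soliton which propagates through the rest of the computation. By Proposition~\ref{phat}(i) the trajectory of the $j$-th soliton in $(x,t)$-coordinates is
\[
z_j(t) \;=\; x_0 + (T(x_0)-t)\,\tanh\zeta_j(s) \;=\; x_0 - (T(x_0)-t)\,d_j(s),
\]
since $d_j=-\tanh\zeta_j$, not $x=x_0 + (T(x_0)-t)\,d_j(s)$ as you wrote. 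With $\bar\zeta_{\bar k}\to+\infty$ we have $d_{\bar k}\to-1$ and $\tanh\zeta_{\bar k}\to+1$, so the $\bar k$-th soliton's trajectory is asymptotic to the \emph{right} branch $x=x_0+(T(x_0)-t)$, i.e.\ soliton $\bar k$ is the \emph{rightmost}, not the leftmost. The leftmost soliton is $i=1$, with $\bar\zeta_1\to-\infty$ and $d_1\to+1$.

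Consequently, for $x_1<x_0$ (your chosen case, $\eta(x_1)=-1$) the surviving soliton after the cascade is soliton $1$, and the matching should give
\[
T'(x_1)\;=\;d_1(s^*)+O\big(s^{*\,-1-\delta}\big)\;=\;1-\frac{2\,e^{2\alpha_{k,1}}\,e^{2\bar\zeta(x_0)}\big(1+o(1)\big)}{|\log|x_1-x_0||^{(\bar k-1)(p-1)/2}},
\]
so that $T'(x_1)\uparrow 1$, $T'_l(x_0)=1$, and $1-T'(x_1)>0$, consistent with $T$ being $1$-Lipschitz and with $\eta(x_1)=-1$ in the exponent $e^{-2\eta(x_1)\bar\zeta(x_0)}=e^{2\bar\zeta(x_0)}$ of Theorem~\ref{cor1}. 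Your final display $T'(x_1)=-1+2e^{-2\bar\zeta(x_0)}(1+o(1))/|\log|x_1-x_0||^{(\bar k-1)(p-1)/2}$ is the correct asymptotic for $x_1>x_0$ (rightmost soliton $\bar k$, $\eta=+1$), so your computation and your stated case $x_1<x_0$ do not match, and the ensuing claim ``letting $x_1\to x_0^-$ gives $T'_l(x_0)=1$'' contradicts your own formula, which tends to $-1$. Once you use the correct trajectory sign and track soliton $1$ for $x_1<x_0$ (respectively soliton $\bar k$ for $x_1>x_0$), the argument is sound and matches the paper's intent.
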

\begin{proof}
Given the strategy we have just given for the proof of Proposition
\ref{Sisol}, one can check with no difficulty that the proof of the vector-valued
case holds here. See the proof of Proposition 4.10 page 2870 in \cite{MZdmj12}.
\end{proof}

\subsection{Sharp slope estimate near characteristic points}
In this subsection, we follow the strategy of C\^ote and Zaag in
\cite{CZcpam13} in order to refine the slope estimate of Proposition
\ref{propmore}, concluding the proof of Theorem \ref{cor1}.

\medskip

\textit{Proof of Theorem \ref{cor1}}: From Proposition \ref{propmore},
we only need to give a sharper expansion of $T'(x_1)$ for $x_1$ close
to $x_0$ in order to conclude. That improvement comes from Section 2.2
page 1559 in \cite{CZcpam13}, which holds verbatim in the
complex-valued case, and takes as input the improvement of the soliton
center's behavior given in Theorem \ref{th1}.

\appendix

\section{Estimates related to the bilinear form $\varphi$}\label{appequiv}
This section is dedicated to the proof of Lemma \ref{lemequiv}. 

\medskip

\begin{proof}[Proof of Lemma \ref{lemequiv}]
  (i) Forgetting the terms involving $K(y,s)$ \eqref{defK}, the result
  follows the Cauchy-Schwarz inequality, by definition of the
  bilinear form $\varphi$ \eqref{defvarphi} and the norm $\q H$
  \eqref{defnh}. Let us focus then on the the 2 other terms, which are
  both bounded by
  \[
C\iint |K(y,s)|^{p-1}|r_1(y)|\cdot |{\bd r}_1(y)|\rho(y) dy.
  \]
  From the embedding in \eqref{embed}, it is enough to justify that
  \begin{equation}\label{lik}
|K(y,s)|^{p-1}\le \frac C{1-y^2}
    \end{equation}
    in order to conclude. To justify this, we first recall from
    \eqref{F}, \eqref{majoration1} and the definitions \eqref{defnh}
    and \eqref{10} of the norms in $\q H$ and $\q H_0$ that 
  \[
\forall d\in(-1,1),\;\;\|\kappa(d)\|_{\q H_0} \le C.
  \]
  Using the embedding in \eqref{embed}, we see that
  \[
\forall (d,y)\in(-1,1)^2,\;\;|\kappa(d,y)| \le C(1-y^2)^{-\frac 1{p-1}}.
\]
By definition \eqref{defK}, \eqref{lik} follows and so does item
(i).\\
(ii) As in the real case treated in Merle and Zaag \cite{MZajm12},
   noting that the sum of solitons is decoupled from \eqref{orth}, one has to
localize the bilinear form near each soliton and reduce the question
to the single-soliton case, which was already treated in Azaiez
\cite{Atams15}. As a matter of fact, one can see that near $i$-th
soliton where $i=1,\dots,k$, the main contribution of $\varphi(q_-,
q_-)$ reduces to a sum of the 1-soliton versions $\check
\varphi_{d_i(s)}$ \eqref{134'} and $\tilde \varphi_{d_i(s)}$
\eqref{134''} evaluated at adequate projections of
$e^{-i\theta_i(s)}q$.
Since the adaptation is straightforward and in order to keep the paper
in a reasonable length, we omit the proof and kindly refer the reader
to Lemma 3.10 page 605 in \cite{MZajm12}.
\end{proof}

\section{Technical results related to solitons}
In this section, we prove the following result involving solitons:
\begin{lemma}\label{E}{\bf (A table of integrals involving the
    solitons)} There exists some $\delta>0$, such that the following
  estimates hold for all $s$ large enough, where $J$ is defined in
  \eqref{defJ}, $\bar p=\min (p,2)$ and the separators $y_j$ are
  introduced right after \eqref{defvb}:\\
(i) Let $\check A_{i,j,l} =
\d\int_{y_{j-1}}^{y_j}\frac{y+d_i}{1+yd_i}\kappa(d_i)\kappa(d_j)^{p-1}\kappa(d_l)\rho
dy$ with $l\neq j$. Then:\\
- if $i=j$ and $l=i\pm 1$, then $|\check A_{i,i,l}- \sgn(l-i)\check
c_2 e^{-\frac 2{p-1}|\zeta_l-\zeta_i|}|\le  CJ^{1+\delta}$\\
where $\check c_2=2^{\frac 2{p-1}}\kappa_0^{p+1}\int_{\m R}
\cosh^{-\frac{2p}{p-1}}(z) \tanh(z)e^{\frac{2z}{p-1}}dz>0$,\\ 
- otherwise, $|\check A_{i,j,l}| \le CJ^{1+\delta}$.\\
 (ii) Let $\tilde A_{i,j,l}=\int_{y_{j-1}}^{y_j}\kappa(d_i)
 \kappa(d_j)^{p-1}\kappa(d_l)\rho dy$ with $l\neq j$. Then:\\
- if $i=j$ and $l=i\pm 1$, then $|\tilde A_{i,i,l}-\tilde c_2
e^{-\frac{2}{p-1}|\zeta_l-\zeta_i|}|\le C J^{1+\delta}$, where\\
 $\tilde c_2=2^{\frac 2{p-1}}\kappa_0^{p+1}\int_{\m R}
\cosh^{-\frac{2p}{p-1}}(z) e^{\frac{2z}{p-1}}dz>0$,\\ 
- otherwise, $\tilde A_{i,j,l}\le C J^{1+\delta}$.\\
(iii) If $l\neq j$, then $B_{i,j,l}=\int_{y_{j-1}}^{y_j}\kappa(d_i)
\kappa(d_j)^{p-\bar p}\kappa(d_l)^{\bar p}\rho dy\le C J^{1+\delta}$.
\end{lemma}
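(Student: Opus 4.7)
The unifying strategy is the change of variables $y = \tanh \xi$, which under the identification $d_m = -\tanh \zeta_m$ produces the compact expressions
\[
\kappa(d_m, \tanh\xi) = \kappa_0 \cosh^{\frac{2}{p-1}}(\xi)\cosh^{-\frac{2}{p-1}}(\xi - \zeta_m),\quad \frac{\tanh\xi + d_i}{1 + d_i \tanh\xi} = \tanh(\xi - \zeta_i),
\]
together with $\rho(y)\,\ud y = \cosh^{-\frac{2p+2}{p-1}}(\xi)\,\ud\xi$ and the transformed separators $y_j = \tanh \xi_j$ with $\xi_j = (\zeta_j + \zeta_{j+1})/2$. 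A direct bookkeeping shows that in each of the three integrals (i)--(iii), the powers of $\cosh(\xi)$ cancel exactly (a consequence of the scaling structure of the soliton family). After this reduction, each integral equals $\kappa_0^{p+1}$ times a pure product of $\cosh$-factors centered at the $\zeta_m$, integrated over $(\xi_{j-1},\xi_j)$, with the strongest decay being $\cosh^{-\frac{2p}{p-1}}(\xi-\zeta_j)$ in (i) and (ii).

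For the dominant cases $i = j$ and $l = j \pm 1$ of (i) and (ii), I substitute $z = \xi - \zeta_j$. The endpoints $-(\zeta_j-\zeta_{j-1})/2$ and $(\zeta_{j+1}-\zeta_j)/2$ tend to $-\infty$ and $+\infty$ by \eqref{separation}, while the factor $\cosh^{-\frac{2p}{p-1}}(z)$ ensures that the truncation error is exponentially smaller than the leading order. In the bulk, the remaining soliton factor $\cosh^{-\frac{2}{p-1}}(z - (\zeta_l - \zeta_j))$ is approximated by its far-field form $2^{\frac{2}{p-1}}\, e^{-\frac{2}{p-1}|\zeta_l - \zeta_j|}\, e^{\pm\frac{2z}{p-1}}$, with a relative error of order $e^{-2|\zeta_l - \zeta_j|}$. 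Collecting leading contributions yields exactly $\check c_2\, e^{-\frac{2}{p-1}|\zeta_l - \zeta_j|}$ in (i) and $\tilde c_2\, e^{-\frac{2}{p-1}|\zeta_l - \zeta_j|}$ in (ii); the signed prefactor $\sgn(l-i)$ in (i) comes from the oddness of $\tanh(z)$, which reverses sign under $z \mapsto -z$ in the case $l = j-1$.

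The remaining cases of (i) and (ii), and case (iii) in full, all admit an upper bound of the form $C\, e^{-(1+\delta')\frac{2}{p-1}(\zeta_m - \zeta_{m-1})}$ for some neighbouring index pair, which is absorbed into $J^{1+\delta}$ upon choosing $\delta > 0$ small enough. The extra factor $\delta'$ arises either from the presence of a non-nearest-neighbour gap (by \eqref{separation}, non-adjacent distances eventually exceed any multiple of the minimum gap) or from the subleading term in the Taylor expansion $\cosh^{-\frac{2}{p-1}}(w) = 2^{\frac{2}{p-1}} e^{\frac{2w}{p-1}}(1 + O(e^{2w}))$ as $w \to -\infty$. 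The only real delicacy is securing this $\delta > 0$ uniformly in the family of configurations; beyond this, the proof reduces to asymptotic analysis of $\cosh$-weighted integrals, entirely analogous to Lemma~E.1 of \cite{MZajm12} and the explicit computations on pp.~610--611 there.
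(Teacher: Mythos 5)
Your proposal is correct and is essentially the paper's own proof, expanded: the paper simply refers to items (iii) and (iv) of Lemma~E.1 of \cite{MZajm12}, whose argument is precisely the $y=\tanh\xi$ substitution, exact cancellation of all $\cosh\xi$ powers, and asymptotic expansion of the surviving $\cosh^{-\frac{2}{p-1}}(\xi-\zeta_l)$ factor. You have reproduced that computation faithfully, including the correct identification of the constants $\check c_2$, $\tilde c_2$ and the sign flip $\sgn(l-i)$ coming from the oddness of $\tanh$.

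One small caution: in the ``otherwise'' cases (e.g. $i=j$, $|l-j|\ge 2$, or $i\neq j$) the decay factor attached to $\zeta_j$ drops from $\cosh^{-2p/(p-1)}$ to $\cosh^{-2}$ or weaker, and for $p\le 3$ the $z$-integral after substitution is no longer convergent on all of $\bR$; one must use the actual integration range $\bigl(-(\zeta_j-\zeta_{j-1})/2,(\zeta_{j+1}-\zeta_j)/2\bigr)$ and check that the growth of the truncated integral does not eat the exponential gain. This bookkeeping is exactly why the lemma yields only $J^{1+\delta}$ (with $\delta$ dependent on $p$) rather than $J^2$ in all subcases, and it deserves a sentence; you gesture at it with the ``uniformity'' remark, but the actual issue is $p$-dependent integrability of the truncated tails. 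The same check is implicit in \cite{MZajm12}, so this is a matter of completeness, not a flaw in the approach.
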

\begin{proof}
$ $\\ 
(i) See item (iii) in Lemma E.1 page 644 in \cite{MZajm12} except for the exact value of
the constant $\check c_2$ which is given in page 645 of that paper.\\
(ii) One can adapt with no difficulty the the proof of the previous
item given in \cite{MZajm12}.\\
(iii) See item (iv) in Lemma E.1 page 644 in \cite{MZajm12}.
  \end{proof}

  \textbf{Acknowledgements}.
  The authors wish to warmly thank the reviewers for their careful
  reading and valuable
  suggestions which undoubtedly improved the paper. 
  Jacek Jendrej was supported by the ERC Starting Grant ”INSOLIT” 101117126.
    Hatem Zaag wishes to thank Pierre Rapha\"el and the ”SWAT” ERC
project for their support.

\def\cprime{$'$} \def\cprime{$'$}

\end{document}